\newcommand{\R}{\mathbb{R}}
\newcommand{\N}{\mathbb{N}}
\DeclareMathOperator{\supp}{supp}
\DeclareMathOperator{\dist}{dist}
\DeclareMathOperator{\tr}{tr}
\DeclareMathOperator{\USC}{USC}
\DeclareMathOperator{\LSC}{LSC}
\theoremstyle{plain}
\newtheorem{thm}{Theorem}[section]
\newtheorem*{thm*}{Theorem}
\newtheorem{lem}[thm]{Lemma}
\newtheorem*{lem*}{Lemma}
\newtheorem*{cor*}{Corollary}
\newtheorem{prop}[thm]{Proposition}
\newtheorem*{prop*}{Proposition}
\theoremstyle{definition}
\newtheorem{dfn}[thm]{Definition}
\newtheorem*{dfn*}{Definition}
\newtheorem*{ex*}{Example}
\newtheorem{rem}[thm]{Remark}
\newtheorem*{rem*}{Remark}
\newtheorem*{notation*}{Notation}
\numberwithin{equation}{section}
\title[A Game Approach to Free Boundary Problems]
{A Game Approach to Free Boundary Problems of 
Anisotropic Forced Mean Curvature Flow Equations}
\author{Takuya Sato}
\address{(Takuya Sato) Graduate School of Mathematical Sciences, 
University of Tokyo 3-8-1 Komaba, 
Meguro-ku, Tokyo, 153-8914, Japan}
\email{satoh-t@g.ecc.u-tokyo.ac.jp}
\begin{document}

\maketitle

\begin{abstract}
We consider the free boundary problems of degenerate elliptic equations
that describe the level set formulation of 
the interface motion evolved
by anisotropic forced mean curvature flows.
The type of free boundary problems in this paper 
was initially studied as the first-order Hamilton-Jacobi-Isaacs equations 
arising in pursuit-evasion differential games 
and applied to the models of first-order front propagation
in \cite{MR1279982}.
In this paper, 
we consider an extension of these free boundary problems 
to the second-order equations 
and give a deterministic game representation 
based on a discrete approximation scheme in \cite{MR2200259}.
Furthermore, we prove the comparison principle 
for our free boundary problems
by using the framework of time-discrete games.
\end{abstract}

\tableofcontents

\section{Introduction}
We study the Dirichlet problem with free boundary
of the form
\begin{equation}\label{FBP}
  \left\{
  \begin{aligned}
  F(DU,D^2U) & = 1 &&\text{in } D_t\setminus \overline{D}_0,\\
  U &= G &&\text{on } \partial D_0,\\
  U &< t &&\text{in } D_t\setminus \overline{D}_0,\\
  U(x) &\to t &&\text{as } x\to x_0\in \partial D_t,
  \end{aligned}
  \right.\tag{FBP$_t$}
\end{equation}
where $F:(\R^n\setminus \{0\})\times \mathbb{S}^n\to \R$ is a given degenerate elliptic 
and geometric function, 
$D_0\subset \R^n$ is a given open set, 
$G\in C(\partial D_0)$ is a given function
and $t\in (0,+\infty]$ is a parameter.
Here, a pair $(D_t,U)$ of an open set and a function 
that satisfy the above four conditions is called a solution of (\ref{FBP}).
That is, 
we consider $\partial D_t$ to be a free boundary, 
whose determination is part of the problem to be solved.

The first purpose of this paper is to give a game interpretation of (\ref{FBP})
as an extension of \cite{MR2200259}.
We identify the family of deterministic, discrete time, 
two-person games such that the limit of its value functions 
gives a solution of (\ref{FBP}).
The second purpose is to show the comparison principle
of (\ref{FBP}) by applying our game interpretation.
Our comparison principle shows 
the uniqueness of solutions of (\ref{FBP}).

Throughout the paper, 
we assume that $F$ is given by
\begin{equation*}
  F(p,X)=-\tr\left( \sigma(p){}^t\!\sigma (p)X \right)
  + c(p),
\end{equation*}
where the functions $\sigma\in C(\R^n\setminus\{0\};\mathbb{M}^{n\times m})$ and $c\in C(\R^n)$ satisfy 
the following conditions:
\begin{enumerate}[(\text{A}1)]
  \item $\sigma(\lambda p)=\sigma(p)$, $c(\lambda p)=\lambda c(p)$, $c(p)\geq 0$ for each 
  $p\in \R^n\setminus\{0\}$ and $\lambda>0$.\label{assum:homogenity}
  \item $\sigma|_{\partial B(0,1)}, c|_{\partial B(0,1)}$ is Lipschitz continuous.
  \label{assum:Lipschitz-conti}
  \item $\sigma(-p)=\sigma(p)$, $c(-p)=c(p)$ for each 
  $p\in \R^n\setminus\{0\}$.\label{assum:evenness}
  \item $m\geq n-1$ and $\mathop{\text{Im}}\sigma(p):=\{\sigma(p)w\in\R^n \mid w\in\R^m\}=\langle p \rangle^\perp$ 
  for each $p\in \R^n \setminus\{0\}$.\label{assum:rank}
\end{enumerate}
Here, we denote by $\mathbb{M}^{n\times m}$ 
the set of $n\times m$ matrices and 
by $B(x,r)$ the open ball in $\R^n$ with center $x\in\R^n$, radius $r>0$.
We note that $\sigma$ is positively 0-homogenous and 
$c$ is positively 1-homogenous in assumption (A\ref{assum:homogenity}).
According to the form of $F$ and the above conditions, 
we can easily confirm the ellipticity, that is,
\begin{equation}
  F(p,X)\geq F(p,Y) \quad \text{when } X\leq Y \text{ for every } p,
\end{equation}
and the geometricity, that is,
\begin{equation}\label{geometricity}
  F(\lambda p , \lambda X + \mu p\otimes p) = \lambda F(p,X)
  \quad \text{for every } (p,X),\ \lambda>0 \text{ and } \mu\in\R,
\end{equation}
and also
\begin{equation}
  F_*(0,O):=\liminf_{\substack{p\to 0 \\ X\to O}} F(p,X)
  =F^*(0,O):=\limsup_{\substack{p\to 0 \\ X\to O}} F(p,X)
  =0.
\end{equation}

A typical example of our equation $F(DU,D^2U)=1$ 
with (A\ref{assum:homogenity})-(A\ref{assum:rank}) is 
\begin{align}\label{eq:MCF-level-set}
  -\tr \left[\left(I-\frac{DU\otimes DU}{|DU|^2}\right)D^2U \right]=1,
\end{align}
where $\sigma(p):=I-\frac{p\otimes p}{|p|^2}$ and $c(p):=0$.
This equation is called the level set mean curvature flow equation 
and was first analytically studied by \cite{MR1100211,MR1100206}.
In this case, roughly speaking,
the family of level sets $\{\{x\mid U(x)=s\}\}_{s\in[0,t)}$ 
for a solution $U$ of (\ref{eq:MCF-level-set})
is a solution of the surface evolution equation 
\begin{align*}
  V=-\kappa \quad \text{on } \Gamma_s.
\end{align*}
Here, we assume the moving front $\Gamma_s$ is a boundary of 
an open set $D_s$,
and we denote by $V$ and $\kappa$, respectively, 
the velocity of $\Gamma_s$ along its outward normal direction to $D_s$
and the mean curvature of $\Gamma_s$, 
which is the sum of all principal curvatures.
Moreover, we have in our mind the level set equations of 
motion by ``anisotropic mean curvature with anisotropic outer force''
such as 
\begin{align}\label{eq:CZ99}
  V=-b(\mathbf{n})\kappa + c(\mathbf{n}) \quad \text{on } \Gamma_s,
\end{align}
where $\mathbf{n}$ is the outward normal vector to $\Gamma_s$
and $b,c:\partial B(0,1)\to \R$ are the given functions.
In this case, the level set formulation of (\ref{eq:CZ99}) is given by
\begin{align}\label{eq:CZ99-level-set}
  -b\left(\frac{DU}{|DU|}\right)
  \tr \left[ \left(I-\frac{DU\otimes DU}{|DU|^2}\right)D^2U \right]
  +|DU|c\left(\frac{DU}{|DU|}\right)=1
\end{align}
and (\ref{eq:CZ99-level-set}) satisfies (A\ref{assum:homogenity})-(A\ref{assum:rank})
if $b,c:\partial B(0,1)\to \R$ satisfy 
\begin{itemize}
  \item $b(\mathbf{n})>0$, $c(\mathbf{n})\geq 0$ for all $\mathbf{n}\in \partial B(0,1)$,
  \item $\sqrt{b}$ and $c$ is Lipschitz on $\partial B(0,1)$,
  \item $b(-\mathbf{n})=b(\mathbf{n})$, 
  $c(-\mathbf{n})=c(\mathbf{n})$ for all $\mathbf{n}\in \partial B(0,1)$,
\end{itemize}
by setting $\sigma (p):=\sqrt{b(p/|p|)}\left(I-\frac{p\otimes p}{|p|^2}\right)$
and $c(p):=|p|c(p/|p|)$.
Surface evolution equations of the form 
(\ref{eq:CZ99}) were studied in relation to the thermodynamics of two-phase materials
by S. B. Angenent and M. E. Gurtin 
in \cite{MR1017288,MR918798,MR1013461,MR1256146}.
In the case of planar curves, the asymptotic behavior of the interface was studied
using the Wulff shape determined by 
$b$ and $c$ in \cite{MR1682990}.

First-order cases of free boundary problems of the form (\ref{FBP})
were studied in \cite{MR991958,MR1214756}
as firsrt-order Hamilton-Jacobi-Isaacs equations
arising in pursuit-evasion differential games.
These free boundary problems were used in \cite{MR1279982}
to describe the first-order front propagation of the form 
\begin{align}\label{eq:first-order}
  V=c(x,\mathbf{n}) \quad \text{on }\Gamma_s
\end{align}
with a level set equation and a corresponding differential game.
It was shown that the solution of the level set equation of (\ref{eq:first-order})
can be characterized by a value function of a pursuit-evasion differential game.
The large time asymptotics of the front 
was also studied in \cite{MR1279982} by making use of 
the theory of differential games.
They showed that when $c(x,\mathbf{n})=c(\mathbf{n})$ and $c(\mathbf{n})>0$
for all $|\mathbf{n}|=1$,
the convergence
\begin{align}\label{eq:conv-Wulff}
  \frac{1}{s} \Gamma_s \to \partial (\mathop{\mathrm{Wulff}}(c))\quad \text{as } s\to\infty
\end{align}
holds in the sense of Hausdorff distance,
where we denote by $\mathop{\mathrm{Wulff}}(c)\subset \R^n$ the Wulff shape of $c$.
It is also known that the convergence (\ref{eq:conv-Wulff})
holds for surfaces $\Gamma_s$ satisfying equation (\ref{eq:CZ99})
when the initial surface $\Gamma_0$ encloses 
a sufficiently large ball, 
and this was studied in \cite{MR1204331,MR1674750} by using the level set method.

In this paper, 
we consider a problem that extends the model of \cite{MR1279982} 
by adding a curvature term.
R. V. Kohn and S. Serfaty introduced in \cite{MR2200259}
a discrete approoximation scheme 
for the mean curvature flow equations 
using a deterministic, discrete time game.
Our approach is inspired 
by the results and its differential game approach 
of \cite{MR991958,MR1214756,MR1279982}
and the game-theoritic approximation of \cite{MR2200259}.\vspace{1ex}\\
\textbf{Main results.}
We first show the existence of a function $U^\varepsilon:\R^n\setminus\overline{D}_0\to\R\cup\{+\infty\}$
that satisfies the equation
\begin{align*}
  U^\varepsilon (x)
  =\inf_{(\mathbf{v},\mathbf{w})\in \mathcal{D}}\sup_{\mathbf{b}\in\mathcal{S}}
  \left\{
  \begin{aligned}
    &\varepsilon^2 +U^\varepsilon (x+\delta^\varepsilon(\mathbf{v},\mathbf{w},\mathbf{b}))
    &&\text{if}\ x+\delta^\varepsilon(\mathbf{v},\mathbf{w},\mathbf{b})\in \R^n\setminus \overline{D}_0, \\
    &\varepsilon^2 +G(x+\delta^\varepsilon(\mathbf{v},\mathbf{w},\mathbf{b}))
    &&\text{if}\ x+\delta^\varepsilon(\mathbf{v},\mathbf{w},\mathbf{b})\in \overline{D}_0,
  \end{aligned}
  \right.
\end{align*}
which is called the dynamic programming principle,
and prove that its half-relaxed limits
\begin{align*}
  \overline{U}(x):= \limsup_{\substack{\R^n\setminus \overline{D}_0\ni y\to x \\ \varepsilon \to 0}}
  U^\varepsilon(y)\quad \text{and}\quad 
  \underline{U}(x):=\liminf_{\substack{\R^n\setminus \overline{D}_0\ni y\to x \\ \varepsilon \to 0}}
  U^\varepsilon(y),
\end{align*}
respectively, is a viscosity subsolution and supersolution of 
\begin{align*}
  \left\{
  \begin{aligned}
  F(DU,D^2U) & = 1 &&\text{in } \{x\mid \overline{U}(x)<\infty\}\setminus \overline{D}_0,\\
  U &= G &&\text{on } \partial D_0.
  \end{aligned}
  \right.
\end{align*}
Here, the sets $\mathcal{D}$ and $\mathcal{S}$ are defined as 
\begin{align*}
  \begin{aligned}
    \mathcal{D} &:= \mathcal{D}_1 \times \mathcal{D}_2,\\
    \mathcal{D}_1&:=\{ \mathbf{v}=(v^1,v^2)\in \R^n\times\R^n \mid |v^1|=|v^2|=1 \},\\ 
    \mathcal{D}_2&:=\{ \mathbf{w}=(w^1,\ldots, w^m)\in \R^{m\times m} \mid \{w^i\} \text{: orthonormal basis of }\R^m\},\\
    \mathcal{S}&:=\{ \mathbf{b}=(b^1,\ldots,b^m)\in \{\pm 1\}^{m} \}.
  \end{aligned}
\end{align*}
and a function $\delta^\varepsilon : \mathcal{D}\times\mathcal{S}\to \R^n$ is defined as 
\begin{align*}
  \delta^\varepsilon(\mathbf{v},\mathbf{w},\mathbf{b})
  :=\varepsilon \sqrt{2}\sum_{i=1}^m b^i\sigma(v^1)w^i + \varepsilon^2 c(v^1)v^2.
\end{align*}
These sets and a function $U^\varepsilon$ 
can be interpreted as strategy sets and a value function 
of a deterministic two-person game. 
The details of this interpretation is provided in Section 3.
Furthermore, using this value function and its dynamic programming principle, 
we establish the comparison principle for (\ref{FBP}).
\begin{thm}\label{thm:main}
  Assume $(\mathrm{A}\ref{assum:homogenity})$-$(\mathrm{A}\ref{assum:rank})$ 
  and that $D_0$ is an open set with compact boundary and $G\in C(\partial D_0)$.
  \begin{enumerate}[$(1)$]
    \item Let $(D,W)$ be a viscosity subsolution of $(\mathrm{FBP}_t)$ 
    satisfying the following conditions:
    \begin{itemize}
      \item $D\setminus \overline{D}_0$ is bounded and satisfies $(\mathrm{A}\ref{assum:interior-cone})$.
      \item $W$ is bounded from below on $D\setminus D_0$ and
      continuous on a neighborhood of $\partial(D\setminus D_0)$.
    \end{itemize}
    Then, $D\supset\{\underline{U}<t\}\cup D_0$ and $W\leq \underline{U}$ on $D \setminus D_0$.
    \item Let $(\widetilde{D},V)$ be a viscosity supersolution of $(\mathrm{FBP}_t)$ 
    satisfying the following contiditons:
    \begin{itemize}
      \item $\widetilde{D}\setminus \overline{D}_0$ is bounded and satisfies $(\mathrm{A}\ref{assum:interior-cone})$.
      \item $V$ is bounded from below on $\widetilde{D}\setminus D_0$ 
      and continuous on a neighborhood of $\partial(\widetilde{D}\setminus D_0)$.
    \end{itemize}
    Then, $\widetilde{D}\subset\{\overline{U}<t\}\cup D_0$ and $V\geq \overline{U}$ on $\widetilde{D}\setminus D_0$.
  \end{enumerate}
\end{thm}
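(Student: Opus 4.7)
The plan is to prove Theorem~\ref{thm:main} by exploiting the dynamic programming principle (DPP) satisfied by $U^\varepsilon$ as a bridge, rather than by a direct doubling of variables. The high-level idea, inspired by \cite{MR2200259}, is to translate the viscosity subsolution (resp.\ supersolution) property of $(D,W)$ (resp.\ $(\widetilde D, V)$) into a one-sided discrete analogue of the DPP at scale $\varepsilon$, iterate this inequality along controlled game trajectories until they reach $\overline{D}_0$, and then compare with the exact DPP for $U^\varepsilon$ before letting $\varepsilon \to 0$.

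For part $(1)$, the first key step is to show that the subsolution $W$ satisfies, up to lower-order errors, a discrete sub-DPP of the form
\[
  W(x) \leq \inf_{(\mathbf{v},\mathbf{w})\in\mathcal{D}}\sup_{\mathbf{b}\in\mathcal{S}}
  \bigl[\varepsilon^2 + W(x + \delta^\varepsilon(\mathbf{v},\mathbf{w},\mathbf{b}))\bigr] + o(\varepsilon^2)
\]
for $x \in D \setminus \overline{D}_0$. This will follow from a Taylor expansion of a viscosity test function $\varphi$ touching $W$ from above: the choice $v^1 = D\varphi/|D\varphi|$ annihilates the $O(\varepsilon)$ linear term thanks to $(\mathrm{A}\ref{assum:rank})$, since $\mathop{\text{Im}}\sigma(v^1) = \langle v^1\rangle^\perp$ makes $\sigma(v^1)^{T} D\varphi = 0$; after this cancellation, the quadratic $\sigma$-term and the $c$-drift assemble into $-\varepsilon^2 F(D\varphi, D^2\varphi)$, and the subsolution inequality $F \leq 1$ closes the estimate. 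Singular behavior at $D\varphi = 0$ is handled via the geometricity~(\ref{geometricity}) together with $F_*(0,O) = F^*(0,O) = 0$.

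Next, I would iterate this sub-DPP along a discrete trajectory $x_0 = x, x_1, \ldots$ in which the minimizer plays a near-optimal $W$-adapted strategy while the maximizer responds adversarially. Boundedness of $D \setminus \overline{D}_0$ combined with the interior cone condition $(\mathrm{A}\ref{assum:interior-cone})$ should ensure that the maximizer can drive the trajectory into $\overline{D}_0$ within $N = O(\varepsilon^{-1})$ steps while keeping the accumulated error $N \cdot o(\varepsilon^2)$ negligible. On exit, the boundary condition $W \leq G$ on $\partial D_0$ together with the DPP identity for $U^\varepsilon$ played along the same trajectory gives $W(x) \leq U^\varepsilon(y) + o(1)$ for $y$ near $x$, and the definition of the lower half-relaxed limit yields $W(x) \leq \underline{U}(x)$. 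The inclusion $D \supset \{\underline{U} < t\} \cup D_0$ then follows by combining this pointwise estimate with the strict interior inequality $W < t$ on $D \setminus \overline{D}_0$ and the boundary limit $W \to t$ at $\partial D$: any $x \in \{\underline{U} < t\} \setminus D_0$ cannot satisfy $W(x) = t$, hence cannot lie on $\partial D$, and must therefore belong to the interior. Part $(2)$ is argued symmetrically, with the supersolution producing a super-DPP of the form $V \geq \varepsilon^2 + \inf\sup V(\cdot + \delta^\varepsilon) - o(\varepsilon^2)$, whose iteration yields $V \geq \overline{U}$ and the corresponding inclusion $\widetilde{D} \subset \{\overline{U} < t\} \cup D_0$.

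The main obstacle will be uniform control of the accumulated $o(\varepsilon^2)$ errors over $O(\varepsilon^{-1})$ iterations, particularly near the free boundary, where $W$ tends to $t$ and candidate test functions may degenerate. I expect to address this by localizing to sub-level sets $\{W \leq t - \eta\}$, establishing the one-step inequality uniformly there, completing the iteration argument on each sub-level set, and finally letting $\eta \to 0$. The interior cone condition $(\mathrm{A}\ref{assum:interior-cone})$ plays an essential quantitative role in guaranteeing that the maximizer can make macroscopic progress towards $\partial D_0$ at each step, which is ultimately what controls the number of game iterations and closes the estimate.
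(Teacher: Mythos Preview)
Your proposal has several genuine gaps that prevent the argument from going through.

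First, the one-step inequality
\[
  W(x) \leq \inf_{(\mathbf{v},\mathbf{w})}\sup_{\mathbf{b}}
  \bigl[\varepsilon^2 + W(x + \delta^\varepsilon(\mathbf{v},\mathbf{w},\mathbf{b}))\bigr] + o(\varepsilon^2)
\]
cannot be obtained as you describe. The Taylor expansion applies to a smooth test function $\varphi$ touching $W$ from above at $x$, and yields an inequality for $\varphi$, not for $W$; replacing $\varphi$ by $W$ at the displaced points $x+\delta^\varepsilon$ goes the wrong way since $W\leq\varphi$. Moreover, $W$ is merely upper semicontinuous, so there may be no test function at a given point, and even when there is, the $o(\varepsilon^2)$ depends on that particular $\varphi$ and is not uniform along a trajectory.

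Second, the termination estimate $N=O(\varepsilon^{-1})$ is unfounded, and the roles of the players are reversed in your description: Player~II (the maximizer of the cost) tries to \emph{prevent} the marker from reaching $\overline{D}_0$, not to drive it there. In general the game need not terminate at all, which is exactly why $U^\varepsilon$ can equal $+\infty$; boundedness of $D\setminus\overline{D}_0$ gives no quantitative bound on the exit time. The interior cone condition $(\mathrm{A}5)$ is a boundary-regularity hypothesis on $\partial\Omega$ used solely in the PDE comparison principle (Theorem~\ref{thm:comparison-Dirichlet}); it plays no role in controlling game trajectories.

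The paper circumvents both obstructions. It first applies the Kruzhkov transform $\psi(r)=1-e^{-r}$ so that all functions become bounded, then introduces an \emph{auxiliary} game value function $w^\varepsilon$ (resp.\ $v^\varepsilon$) on the bounded domain $D\setminus\overline{D}_0$ (resp.\ $\widetilde D\setminus\overline{D}_0$) with boundary datum $\psi(t)$ on the free boundary. The comparison between $w=\psi\circ W$ and $\liminf w^\varepsilon$ is done at the PDE level via Theorem~\ref{thm:comparison-Dirichlet} (this is where $(\mathrm{A}5)$ enters), not by iterating a discrete inequality for $W$. Only afterwards are $w^\varepsilon$ and $u^\varepsilon$ compared along a single game trajectory, with Player~I playing a near-optimal strategy for one value function and Player~II for the other; the key lemma is that such a trajectory cannot exit through the free boundary, which forces it either to hit $\overline{D}_0$ or to run forever, and in both cases the bounded, discounted DPP yields the desired inequality. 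For part~(1) the inclusion $D\supset\{\underline U<t\}$ requires an additional, rather delicate, inductive construction of strategies to rule out repeated escapes through $\partial D$.
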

Here, condition (A\ref{assum:interior-cone}) is provided in Section 2,
that is a regularity assumption for a boundary of a domain.
The formal definition of viscosity solutions for (\ref{FBP}) is also provided in Section 2.
From this theorem, together with the inequality $\underline{U}\leq\overline{U}$,
the following proposition follows directly.
\begin{thm}
  If $(D,W)$ is a viscosity subsolution of $(\mathrm{FBP}_t)$ and 
  $(\widetilde{D},V)$ is a viscosity supersolution of $(\mathrm{FBP}_t)$,
  and they satisfy the assumpsitons in Theorem $\ref{thm:main}$, 
  then $D\supset \widetilde{D}$ and $W\leq V$ on $\widetilde{D}\setminus D_0$. 
\end{thm}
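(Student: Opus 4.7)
The statement is essentially a bookkeeping consequence of Theorem~\ref{thm:main}: the half-relaxed limits $\underline{U}$ and $\overline{U}$ of the game value function $U^\varepsilon$ act as a common comparator sandwiched between any admissible sub- and supersolution. My plan is to apply both halves of Theorem~\ref{thm:main} separately and then chain the resulting set inclusions and pointwise inequalities using the trivial relation $\underline{U}\leq \overline{U}$, which is automatic from the definition of the half-relaxed limits since $\liminf\leq \limsup$ along the same net of $(\varepsilon,y)\to(0,x)$.

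Concretely, I would first invoke Theorem~\ref{thm:main}(1) for $(D,W)$ to obtain $D\supset\{\underline{U}<t\}\cup D_0$ together with $W\leq \underline{U}$ on $D\setminus D_0$, and then Theorem~\ref{thm:main}(2) for $(\widetilde{D},V)$ to obtain $\widetilde{D}\subset\{\overline{U}<t\}\cup D_0$ together with $V\geq \overline{U}$ on $\widetilde{D}\setminus D_0$. Because $\underline{U}\leq \overline{U}$ pointwise, one has $\{\overline{U}<t\}\subset\{\underline{U}<t\}$, and combining the two set inclusions yields
\[
\widetilde{D}\subset\{\overline{U}<t\}\cup D_0\subset\{\underline{U}<t\}\cup D_0\subset D,
\]
which is the desired $D\supset\widetilde{D}$. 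This inclusion also ensures $\widetilde{D}\setminus D_0\subset D\setminus D_0$, so both pointwise bounds are valid on $\widetilde{D}\setminus D_0$, and concatenating them gives $W\leq \underline{U}\leq \overline{U}\leq V$ there.

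There is no genuine obstacle to overcome here, since once Theorem~\ref{thm:main} is available the entire deduction reduces to the sandwich $\underline{U}\leq\overline{U}$. The only care required is to verify that the hypotheses imposed on $(D,W)$ and $(\widetilde{D},V)$ in the statement — boundedness of $D\setminus\overline{D}_0$ and $\widetilde{D}\setminus\overline{D}_0$, the interior-cone regularity (A\ref{assum:interior-cone}), and the boundary regularity of $W$ and $V$ — are precisely those under which each half of Theorem~\ref{thm:main} is stated, so that both halves may be invoked without any supplementary argument.
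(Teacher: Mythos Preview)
Your proposal is correct and matches the paper's approach exactly: the paper states that the result ``follows directly'' from Theorem~\ref{thm:main} together with the inequality $\underline{U}\leq\overline{U}$, and your argument spells out precisely this sandwich $W\leq\underline{U}\leq\overline{U}\leq V$ and the corresponding chain of set inclusions.
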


We use the theory of viscosity solutions in our study.
For a general theory of viscosity solutions, 
see \cite{MR1118699,MR2084272}.
For the viscosity solution theory of Hamilton-Jacobi equations,
particularly the relation between optimal control, 
differential games, and viscosity solutions, 
refer to \cite{MR4328923}.
Many examples and methods of viscosity solutions 
for geometric flow equations are presented in \cite{MR2238463}.

We conclude this introduction by giving some works related to
the study of this paper.
R. V. Kohn and S. Serfaty provided in \cite{MR2681474} 
a deterministic game interpretation for 
general non-singular fully nonlinear elliptic and parabolic PDEs.
Based on the method in \cite{MR2200259}, 
alternative proofs
of various properties of the level set mean curvature flow equations
were provided in \cite{MR2873194,MR3466458}
via a game-theoretic approach,
without relying on advanced partial differential equation theory.
Additionally, 
for obstacle problems of surface evolution equations, 
in particular for their large time behavior,
many results were obtained
by using deterministic games 
in \cite{info:hdl/2115/87822,arXiv:2409.06855}.
As a viscosity solution approach to free boundary problems,
viscosity solutions for Stefan and Hele-Shaw type problems, 
which are different types of free boundary problems from those considered in this paper,
were studied. 
A viscosity solution approach to one-phase Stefan and Hele-Shaw problems were introduced in \cite{MR1994745} 
and two-phase problems were introduced in \cite{MR2763347}, 
and existence and uniqueness results were proved.
\vspace{1ex}\\
\textbf{Organization.}
This paper is organized as follows:
in Section 2, we define viscosity solutions for our free boundary problems
and show the comparison principle for Dirichlet problems
with a generalized boundary condition, 
that we need in our proof of comparison results for (\ref{FBP}).
In Section 3, we define the discrete time, two-person games
and show that the existence of value functions and 
their half relaxed limits are viscosity sub and supersolutions
of our problems.
Finally, we give the proof of our comparison results for
free boundary problems in Section 4.

\section{Preliminaries}

First, we define the solution of (\ref{FBP}).
In our analysis, we use the framework of the viscosity solutions 
for singular equations (see \cite{MR1100211,MR1100206}).
We define solutions to (\ref{FBP}) 
in the same manner as the solution to free boundary problems of 
Hamilton-Jacobi-Isaacs equations of pursuit-evasion problems 
in \cite{MR991958,MR1214756}.
\begin{dfn}[Viscosity solutions of free boundary problems]
Let $D_0\subset \R^n$ be a given open set,
$G\in C(\partial D_0)$ be a given function and 
$t\in(0,+\infty]$ be a parameter. We say that
\begin{enumerate}[$(1)$]
  \item the pair $(D,W)$ of an open set with 
  $D\supset\overline{D}_0$ and a function $W\in\USC(D\setminus D_0)$ 
  is a viscosity subsolution of (\ref{FBP}) 
  if $W$ satisfies 
  \begin{align*}
    \left\{ 
    \begin{aligned}
      F(DW,D^2W) &\leq 1 \quad &&\text{in } D\setminus \overline{D}_0,\\
      W&\leq G \quad &&\text{on }\partial D_0
    \end{aligned}
    \right.
  \end{align*}
  in the sense of viscosity solutions
  and free boundary conditions
  \begin{equation*}
    \left\{
      \begin{aligned}
        W&<t &&\text{in } D\setminus \overline{D}_0,\\
        W(x)&\to t &&\text{as } x\to x_0 \text{ for all } x_0\in\partial D;
      \end{aligned}
    \right.
  \end{equation*}
  \item  the pair $(\widetilde{D},V)$ of an open set with 
  $\widetilde{D}\supset\overline{D}_0$ and a function $V\in\LSC(\widetilde{D}\setminus D_0)$ 
  is a viscosity supersolution of (\ref{FBP}) if
  $V$ satisfies 
  \begin{align*}
    \left\{
    \begin{aligned}
      F(DV,D^2V) &\geq 1 \quad 
      &&\text{in } \widetilde{D}\setminus \overline{D}_0,\\
      V&\geq G \quad 
      &&\text{on }\partial D_0
    \end{aligned}
    \right.
  \end{align*}
  in the sense of viscosity solutions
  and free boundary conditions 
  \begin{equation*}
    \left\{
      \begin{aligned}
        V&<t &&\text{in } \widetilde{D}\setminus \overline{D}_0,\\
        V(x)&\to t &&\text{as } x\to x_0 \text{ for all } x_0\in\partial \widetilde{D};
      \end{aligned}
    \right.
  \end{equation*}
  \item the pair $(D,W)$ is a viscosity solution of (\ref{FBP}) if $(D,W)$ is 
  both a viscosity sub and supersolution of (\ref{FBP}).
\end{enumerate}
\end{dfn}

\begin{rem}
We notice that the boundary conditions
`` $W\leq G$ on $\partial D$ '' in (1) 
and `` $V\geq G$ on $\partial \widetilde{D}$ '' in (2) 
are understood in the sense of viscosity solutions.
\end{rem}

\begin{rem}
If $\partial D$ (resp., $\partial\widetilde{D}$) is empty, 
then we impose no conditions for the limit of $W(x)$ 
as $x\to x_0\in \partial D$ (resp., $V(x)$ as $x\to x_0 \in\partial \widetilde{D}$).
\end{rem}

In Section 3, 
we use the change of variables for our equations called  the Cole-Hopf transformation
or the Kruzhkov transformation.
We denote by $\psi:\R\to\R$ the function defined by 
\begin{align}\label{dfn:psi}
  \psi(r)=1-e^{-r}.
\end{align}
If a smooth function $W$ satisfies $DW\neq 0$ and
$F(DW,D^2W)=1$ in an open set of $\R^n$,
then a function $w:=\psi\circ W$ satisfies
\begin{align}\label{eq:differentiate-W}
  DW = \frac{Dw}{1-w},\quad 
  D^2 W = \frac{D^2 w}{1-w}+\frac{Dw\otimes Dw}{(1-w)^2}
\end{align}
by the chain rule.
Substituting (\ref{eq:differentiate-W}) into 
$F(DW,D^2W)=1$ and applying (\ref{geometricity}),
we have that $w$ satisfies the partial differential equation:
\begin{align}\label{eq:equation-w}
  w+F(Dw,D^2w)=1.
\end{align}
As seen in Proposition \ref{cor:trans},
this calculation can be justified in the sense of viscosity solutions,
and we refer to \cite[Lemma 5]{MR1001919} for its proof.

We are interested in the Dirichlet problem of (\ref{eq:equation-w}):
\begin{equation}\label{eq:Dirichlet-problem}
  \left\{
  \begin{aligned}
    w+F(Dw,D^2w)&=1 &&\text{in } \Omega,\\
    w&=g &&\text{on } \partial\Omega,
  \end{aligned}
  \right.\tag{DP}
\end{equation}
where $\Omega\subset\R^n$ is open and $g\in C(\partial \Omega)$.
We need some properties for solutions of (\ref{eq:Dirichlet-problem}) in the proof of our main results.
\begin{prop}\label{cor:trans}
Assume $g<1$ on $\partial \Omega$ and let $\psi$ be the function given by $(\ref{dfn:psi})$.
If $w\in\USC(\overline{\Omega})$ is a viscosity subsolution of $\mathrm{(\ref{eq:Dirichlet-problem})}$
and satisfies $w\leq 1$ on $\overline{\Omega}$,
then $W(x):=\psi^{-1}(w(x))$ is upper semicontinuous and satisfies
\begin{align*}
  \left\{
  \begin{aligned}
    F(DW,D^2W)&\leq 1 &&\text{in } \Omega\cap \{ W <\infty\},\\
    W&\leq \psi^{-1}\circ g &&\text{on } \partial\Omega \cap \{ W <\infty\}
  \end{aligned}
  \right.
\end{align*}
in the sense of viscosity solutions.
Similarly, if $v\in\LSC(\overline{\Omega})$ is a viscosity supersolution of $\mathrm{(\ref{eq:Dirichlet-problem})}$
and satisfies $v\leq 1$ on $\overline{\Omega}$, 
then $V(x):=\psi^{-1}(v(x))$ is lower semicontinuous and satisfies
\begin{align*}
  \left\{
  \begin{aligned}
    F(DV,D^2V)&\geq 1 &&\text{in } \Omega\cap \mathop{\mathrm{int}}(\{ V <\infty\}),\\
    V&\geq \psi^{-1}\circ g &&\text{on } \partial\Omega \cap \mathop{\mathrm{int}}(\{ V <\infty\})
  \end{aligned}
  \right.
\end{align*}
in the sense of viscosity solutions.
Here, we define $\psi^{-1}(1):=+\infty$.
\end{prop}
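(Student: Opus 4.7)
The plan is to push the entire equivalence through the composition with the diffeomorphism $\psi:\R\to(-\infty,1)$; the supersolution statement is proved identically with reversed inequalities and $\LSC$ envelopes, so I will describe only the subsolution case.

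First, since $\psi^{-1}:(-\infty,1)\to\R$ is continuous and strictly increasing with the convention $\psi^{-1}(1)=+\infty$, the composition $W=\psi^{-1}\circ w$ is upper semicontinuous on $\overline{\Omega}$, and $\{W<\infty\}=\{w<1\}$ is relatively open. This takes care of the basic regularity claim and shows that the statement is meaningful.

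Next I verify the interior subsolution condition. Fix $x_0\in\Omega$ with $W(x_0)<\infty$ and a $C^2$ test function $\phi$ for which $W-\phi$ has a strict local maximum at $x_0$, normalized so $\phi(x_0)=W(x_0)$. Set $\varphi:=\psi\circ\phi=1-e^{-\phi}$. Because $\psi$ is strictly increasing, $w=\psi\circ W\le \psi\circ\phi=\varphi$ near $x_0$ with equality at $x_0$, so $\varphi$ is a valid test from above for $w$ at $x_0$. A direct chain-rule computation gives, with $\lambda:=e^{-\phi(x_0)}=1-w(x_0)>0$,
\[
D\varphi(x_0)=\lambda D\phi(x_0),\qquad
D^2\varphi(x_0)=\lambda D^2\phi(x_0)-\lambda\,D\phi(x_0)\otimes D\phi(x_0).
\]
Applying the subsolution inequality for $w$, using the geometricity (\ref{geometricity}) with $\mu=-\lambda$ to compute
\[
F(D\varphi(x_0),D^2\varphi(x_0))=\lambda F(D\phi(x_0),D^2\phi(x_0)),
\]
and inserting $w(x_0)=1-\lambda$ reduces the inequality to $F(D\phi(x_0),D^2\phi(x_0))\le 1$, which is exactly the desired subsolution condition. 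The boundary inequality $W\le\psi^{-1}\circ g$ on $\partial\Omega\cap\{W<\infty\}$ transfers by the same lifting of test functions, since $g<1$ implies $\psi^{-1}\circ g$ is finite on $\partial\Omega$.

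The main obstacle is the critical case $D\phi(x_0)=0$, where both $F$ and the scaling identity must be interpreted through the semicontinuous envelopes $F_*,F^*$. One uses $F_*(0,O)=F^*(0,O)=0$ together with the geometricity inherited by $F_*$, so that the scaling $F_*(\lambda p,\lambda X-\lambda\,p\otimes p)=\lambda F_*(p,X)$ survives in the limit $p\to 0$; this is standard for singular geometric equations in the sense of Chen--Giga--Goto and is precisely where the author defers to \cite[Lemma~5]{MR1001919}. Apart from this envelope bookkeeping at flat test points, the proof is a mechanical application of the chain rule driven by the geometricity of $F$ and the monotonicity of $\psi$.
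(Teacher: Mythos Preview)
Your argument is correct and is the standard change-of-variables computation for viscosity solutions under a monotone $C^2$ transformation, driven by the geometricity of $F$. The paper does not give its own proof of this proposition at all: it only records the formal chain-rule identity (\ref{eq:differentiate-W}) and then defers the viscosity-level justification entirely to \cite[Lemma~5]{MR1001919}, so there is no proof in the paper to compare against beyond noting that your sketch is precisely the argument such a reference would contain.
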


Next,
we prove a comparison principle for (\ref{eq:Dirichlet-problem}). 
Theorem \ref{thm:comparison-Dirichlet} is inspired by \cite[Theorem 3.1]{MR1702961}
to extend to singular equations.
We use a simular doubling variable technique 
as \cite{MR1702961,MR3365833}.

We need the following assumption on the regularity of $\partial\Omega$.
\begin{enumerate}[(\text{A}1)]
  \setcounter{enumi}{4}
  \setcounter{enumi}{4}
  \item There exist constants $K,\lambda_0>0$, 
  $\R^n$-neighborhood $\mathcal{W}\supset \partial \Omega$ and
  a bounded uniformly continuous function 
  $\mathbf{n} : \mathcal{W}\to \R^n$ 
  satisfying\label{assum:interior-cone}
  \begin{align*}
    B(x-\lambda \mathbf{n}(x),K \lambda ) \subset \Omega \quad 
    \text{for all } x\in \overline{\Omega}\cap \mathcal{W}\text{ and }
    \lambda\in (0,\lambda_0].
  \end{align*}
\end{enumerate}

\begin{thm}\label{thm:comparison-Dirichlet}
Assume $(\mathrm{A}\ref{assum:homogenity})$, $(\mathrm{A}\ref{assum:Lipschitz-conti})$ 
and $(\mathrm{A}\ref{assum:interior-cone})$. 
Let $w\in \USC(\overline{\Omega})$ and $v\in \LSC(\overline{\Omega})$ 
be a bounded viscosity sub and supersolution of $\mathrm{(\ref{eq:Dirichlet-problem})}$, 
respectively, and assume that either
\begin{equation}\label{assum:w-and-v-1}
  w(x) = \limsup_{\substack{y\to x\\y\in\Omega}}w(y) \text{ for all } x\in\partial\Omega
  \text{ and } v \text{ is continuous on a neighborhood of } \partial \Omega
\end{equation}
or
\begin{equation}\label{assum:w-and-v-2}
  w\text{ is continuous on a neighborhood of }\partial \Omega 
  \text{ and }v(x) = \liminf_{\substack{y\to x\\y\in\Omega}}v(y) \text{ for all } x\in\partial\Omega
\end{equation}
and that $\Omega$ is bounded or $\Omega$ is unbounded and 
$\lim_{|x|\to \infty}w(x)=\lim_{|x|\to\infty}v(x)=\mu$
for a constant $\mu\in\R$.
Then, $w\leq v$ on $\overline{\Omega}$.
\end{thm}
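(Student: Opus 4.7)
My plan is a proof by contradiction based on the doubling-of-variables technique, with three complications to handle simultaneously: the singularity of $F$ at $p=0$, the possibly unbounded domain, and the fact that the boundary condition is imposed in the viscosity sense. Assume for contradiction that $M := \sup_{\overline{\Omega}}(w-v) > 0$. In the unbounded case, the hypothesis $w, v \to \mu$ at infinity together with $M > 0$ confines near-maximizers to a fixed compact set, so a standard cutoff reduces to the bounded case. I would then double with the quartic penalty
\[
\Phi_\varepsilon(x, y) = w(x) - v(y) - \frac{1}{4\varepsilon^4}|x-y|^4,
\]
where the quartic exponent (rather than quadratic) is the familiar device from the level-set mean curvature flow literature for absorbing the singularity of $F$: at a maximizer $(x_\varepsilon, y_\varepsilon)$ the test gradient $p_\varepsilon = |x_\varepsilon - y_\varepsilon|^2(x_\varepsilon - y_\varepsilon)/\varepsilon^4$ vanishes precisely when $x_\varepsilon = y_\varepsilon$, and this degenerate case is exactly the one controlled by $F^* = F_* = 0$ at $(0, O)$; otherwise $p_\varepsilon \neq 0$ and $F$ is continuous at $p_\varepsilon$. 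Standard bookkeeping yields $|x_\varepsilon - y_\varepsilon|^4/\varepsilon^4 \to 0$, $w(x_\varepsilon) - v(y_\varepsilon) \to M$, and every cluster point has the form $(\hat x, \hat x)$.

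The main obstacle is excluding the possibility that the doubled maximizer sits on $\partial \Omega$. Here (A\ref{assum:interior-cone}) enters in an essential way, combined with (\ref{assum:w-and-v-1}) or (\ref{assum:w-and-v-2}): the continuity of the appropriate one of $w, v$ on a neighborhood of $\partial \Omega$ upgrades its viscosity boundary inequality into a classical pointwise one against $g$. The uniform interior ball condition then lets me build a smooth barrier $\phi$ on a boundary strip, vanishing on $\partial \Omega$, positive inside, and satisfying $\phi + F(D\phi, D^2\phi) > 1$, using the Lipschitz regularity of $\sigma, c$ on the unit sphere (A\ref{assum:Lipschitz-conti}) and the homogeneity (A\ref{assum:homogenity}). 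Adding a small multiple of $\phi$ to $v$ (or subtracting it from $w$) perturbs $M$ by $o(1)$ while forcing the maximum of $\Phi_\varepsilon$ into $\Omega \times \Omega$. This is the step I expect to be the most delicate, since it is here that the interior cone condition must interact with the singular anisotropic structure of $F$, and the two cases (\ref{assum:w-and-v-1}) and (\ref{assum:w-and-v-2}) require separately tracking which side of the inequality provides the classical boundary datum.

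Once the maximum lies in the interior, I would invoke the Crandall--Ishii lemma to produce symmetric matrices $X_\varepsilon \leq Y_\varepsilon$ corresponding to the second-order jets of $w$ at $x_\varepsilon$ and $v$ at $y_\varepsilon$. Subtracting the subsolution inequality $w(x_\varepsilon) + F(p_\varepsilon, X_\varepsilon) \leq 1$ from the supersolution inequality $v(y_\varepsilon) + F(p_\varepsilon, Y_\varepsilon) \geq 1$ and using ellipticity together with the Lipschitz control of $\sigma$ on $\partial B(0,1)$ yields
\[
M + o(1) \leq w(x_\varepsilon) - v(y_\varepsilon) \leq F(p_\varepsilon, Y_\varepsilon) - F(p_\varepsilon, X_\varepsilon) + o(1) \leq o(1)
\]
as $\varepsilon \to 0$, contradicting $M > 0$ and completing the proof.
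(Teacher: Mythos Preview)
Your interior argument---quartic penalty to absorb the singularity at $p=0$, Ishii's lemma, the degenerate case $x_\varepsilon=y_\varepsilon$---is sound and matches the paper. The gap is in the boundary step.

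First, your claim that continuity of $v$ (say) near $\partial\Omega$ ``upgrades its viscosity boundary inequality into a classical pointwise one against $g$'' is false: a continuous viscosity supersolution can perfectly well satisfy $v(x_0)<g(x_0)$ at a boundary point, the viscosity condition being met through the PDE alternative. Second, (A\ref{assum:interior-cone}) is an interior \emph{cone} condition, not a ball condition. Third, and most seriously, the barrier you describe need not exist and would not do the job anyway. A smooth $\phi$ with $\phi=0$ on $\partial\Omega$, $\phi>0$ in $\Omega$, and $\phi+F(D\phi,D^2\phi)>1$ requires, when $c\equiv 0$ (permitted by (A\ref{assum:homogenity})), that $-\tr[\sigma\sigma^{T}D^2\phi]>1$ near $\partial\Omega$; this forces strong concavity of $\phi$ in the directions tangent to its level sets, which is generally incompatible with $\phi\ge 0$ and $\phi|_{\partial\Omega}=0$ under a mere cone condition. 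Even granting such a $\phi$, adding $\eta\phi$ to $v$ pushes the maximum of $w-v$ \emph{toward} $\partial\Omega$, while subtracting it would require $v-\eta\phi$ to remain a supersolution, which fails since $F$ is nonlinear in the gradient.

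The paper instead follows Barles--Rouy--Souganidis and modifies the \emph{penalty}, not the solutions: one replaces $|x-y|^4/\varepsilon^4$ by $\bigl|(x-y)/\varepsilon+\chi((x+y)/2)\bigr|^4$, where $\chi$ is a smooth vector field built from the cone direction $\mathbf{n}$ of (A\ref{assum:interior-cone}), equal to $+\mathbf{n}$ near boundary maximizers where $w\le g$ and $-\mathbf{n}$ near those where $v\ge g$. This shift forces $x_\varepsilon-y_\varepsilon\approx -\varepsilon\chi$, so (A\ref{assum:interior-cone}) places the relevant one of $x_\varepsilon,y_\varepsilon$ in $\Omega$; the continuity hypothesis on the other function is used only to propagate a strict inequality against $g$ to nearby boundary points, so that the PDE alternative applies there too. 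The cost is that the test gradients $p,q$ now differ by $O(\varepsilon(|p|\wedge|q|))$, and closing the argument requires the structural estimate of Proposition~\ref{lem-CP} rather than simple ellipticity.
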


To clarify the proof of Theorem \ref{thm:comparison-Dirichlet},
we describe the structural condition satisfied 
by the function $F$ in our setting.

\begin{prop}\label{lem-CP}
Assume $(\mathrm{A}\ref{assum:homogenity})$ and $(\mathrm{A}\ref{assum:Lipschitz-conti})$.
For all $K>0$, there exists a constant $C>0$ such that 
for all $\rho,\varepsilon >0$, 
if $p,q \in\R^n\setminus\{0\}$ and 
$X,Y\in \mathbb{S}^n$ satisfy
\begin{align}\label{condi:lem-CP}
  &|p-q|\leq K\varepsilon (|p|\wedge|q|),\quad 
  \begin{pmatrix}
    X & O \\
    O & -Y 
  \end{pmatrix}
  \leq \frac{\rho}{\varepsilon^2}
  \begin{pmatrix}
    I_n & -I_n\\
    -I_n & I_n
  \end{pmatrix}
  +\rho I_{2n},
\end{align}
then
\begin{align*}
  F(q,Y)-F(p,X) \leq C(\rho + \varepsilon(|p|\wedge|q|)).
\end{align*}
\end{prop}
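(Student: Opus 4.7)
The plan is to decompose
\begin{equation*}
F(q,Y) - F(p,X) = \bigl[\tr(\sigma(p){}^t\!\sigma(p)X) - \tr(\sigma(q){}^t\!\sigma(q)Y)\bigr] + \bigl[c(q)-c(p)\bigr]
\end{equation*}
into a diffusion piece and a drift piece, and control each by one of the two terms on the right-hand side of the claim. The drift piece will produce the $\varepsilon(|p|\wedge|q|)$ contribution via $1$-homogeneity and Lipschitz continuity of $c$, while the diffusion piece will produce the $\rho$ contribution; the key point is that the $O(\varepsilon)$ smallness of $\sigma(p)-\sigma(q)$ is exactly the right order to absorb the singular factor $\rho/\varepsilon^2$ in (\ref{condi:lem-CP}).

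The drift bound is elementary. Writing $\hat p := p/|p|$ and $\hat q := q/|q|$, the hypothesis $|p-q|\leq K\varepsilon(|p|\wedge|q|)$ yields the two standard estimates $\bigl||p|-|q|\bigr|\leq K\varepsilon(|p|\wedge|q|)$ and $|\hat p - \hat q|\leq 2|p-q|/(|p|\wedge|q|)\leq 2K\varepsilon$. Plugging these into the identity $|p|c(\hat p) - |q|c(\hat q) = (|p|-|q|)c(\hat q) + |p|(c(\hat p)-c(\hat q))$, or its symmetric version so that the coefficient of the second term is $|p|\wedge|q|$, gives $|c(p)-c(q)|\leq C_1\varepsilon(|p|\wedge|q|)$, using (A\ref{assum:Lipschitz-conti}) for $|c(\hat p)-c(\hat q)|\leq L|\hat p-\hat q|$ and boundedness of $c|_{\partial B(0,1)}$. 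The same reasoning applied to $\sigma$ yields the crucial matrix estimate $\|\sigma(p)-\sigma(q)\|_F\leq C_\sigma\varepsilon$, and $0$-homogeneity gives $\|\sigma(p)\|_F,\|\sigma(q)\|_F\leq M$ for some fixed $M$.

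For the diffusion piece, I rewrite the trace as a sum of quadratic forms $\tr(\sigma{}^t\!\sigma X) = \sum_{i=1}^m \langle X\sigma e_i,\sigma e_i\rangle$ over the standard basis $\{e_i\}$ of $\R^m$. Testing the matrix inequality in (\ref{condi:lem-CP}) against each pair $(\sigma(p)e_i,\sigma(q)e_i)\in\R^{2n}$ and summing over $i$ yields
\begin{equation*}
\tr(\sigma(p){}^t\!\sigma(p)X)-\tr(\sigma(q){}^t\!\sigma(q)Y)
\leq \frac{\rho}{\varepsilon^2}\|\sigma(p)-\sigma(q)\|_F^2 + \rho\bigl(\|\sigma(p)\|_F^2+\|\sigma(q)\|_F^2\bigr),
\end{equation*}
and inserting the bounds from the previous step collapses the right-hand side to $C_2\rho$. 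Combining the two pieces gives $F(q,Y)-F(p,X)\leq C_2\rho + C_1\varepsilon(|p|\wedge|q|)$, which is the claim with $C:=C_1\vee C_2$.

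I do not foresee any serious obstacle; the proof is essentially a matter of accounting. The only delicate point is matching the power of $\varepsilon$ in the Lipschitz estimate for $\sigma$ against the $1/\varepsilon^2$ in the matrix inequality, and the requirement that $\|\sigma(p)-\sigma(q)\|_F = O(\varepsilon)$ is exactly why (A\ref{assum:Lipschitz-conti}) demands Lipschitz regularity of $\sigma|_{\partial B(0,1)}$ rather than mere continuity.
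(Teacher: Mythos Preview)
Your proof is correct and follows essentially the same approach as the paper: the paper's block-matrix trace identity
\[
\tr\!\left[\begin{pmatrix}{}^t\!\sigma(p) & {}^t\!\sigma(q)\end{pmatrix}\begin{pmatrix}X & O\\ O & -Y\end{pmatrix}\begin{pmatrix}\sigma(p)\\ \sigma(q)\end{pmatrix}\right]
\]
is exactly your column-by-column testing of (\ref{condi:lem-CP}) against $(\sigma(p)e_i,\sigma(q)e_i)$ summed over $i$, yielding the same Frobenius-norm bound, and the drift term is handled identically via Lipschitz continuity of $c$ and the hypothesis $|p-q|\leq K\varepsilon(|p|\wedge|q|)$. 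The only cosmetic difference is that the paper bounds $c(q)-c(p)\leq L_c|p-q|$ in one step (using that a $1$-homogeneous function Lipschitz on the sphere is globally Lipschitz), whereas you pass explicitly through $\hat p,\hat q$.
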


\begin{proof}
Let $L_\sigma$ and $L_c$ be the Lipschitz constants of $\sigma$ and $c$ 
determined by (A\ref{assum:Lipschitz-conti}), respectively. 
Take $K,\rho,\varepsilon >0$ and $p,q,X,Y$ so that they satisfy (\ref{condi:lem-CP}).
Then,
\begin{align*}
  &F(q,Y)-F(p,X)
  =\tr[{}^t\!\sigma(p)X\sigma(p)]-\tr[{}^t\!\sigma(q)Y\sigma(q)]
  +c(q)-c(p)\\
  &=\tr \left[
  \begin{pmatrix}
    {}^t\!\sigma(p) & {}^t\!\sigma(q)
  \end{pmatrix}
  \begin{pmatrix}
    X & O \\
    O & -Y
  \end{pmatrix}
  \begin{pmatrix}
    \sigma(p)\\
    \sigma(q)
  \end{pmatrix}
  \right] + c(q)-c(p)\\
  &\leq \frac{\rho}{\varepsilon^2}
  \tr\left[
    {}^t\!(\sigma(p)-\sigma(q))(\sigma(p)-\sigma(q))
  \right]
  +\rho \tr[{}^t\!\sigma(p)\sigma(p)]+\rho \tr[{}^t\!\sigma(q)\sigma(q)]
  +L_c|p-q|\\
  &=\frac{\rho}{\varepsilon^2}\sum_{i=1}^n\sum_{j=1}^m (\sigma_{ij}(p)-\sigma_{ij}(q))^2
  +\rho\sum_{i=1}^n\sum_{j=1}^m (\sigma_{ij}(p)^2+\sigma_{ij}(q)^2)
  +L_c|p-q|\\
  &\leq \frac{\rho }{\varepsilon^2}C_{n,m}L_\sigma^2\left(\frac{|p-q|}{|p|\wedge|q|}\right)^2
  +\rho C_{n,m}\max_{\partial B(0,1)}\lVert\sigma\rVert^2 + L_c|p-q|\\
  &\leq C_{n,m,K,\sigma,c}(\rho +\varepsilon (|p|\wedge|q|)).
\end{align*}
\end{proof}

\begin{proof}[Proof of Theorem $\ref{thm:comparison-Dirichlet}$]
The proof is quite similar to that in \cite[Theorem 3.1]{MR1702961}
and \cite[Appendix A]{MR3365833}, 
and we give a proof to make our paper self-contained.
We prove the statement only under the assumption (\ref{assum:w-and-v-1}).
For the case of (\ref{assum:w-and-v-2}), 
it suffices to consider the situation where $u$ and $v$ are replaced 
by $-v$ and $-u$, respectively.

We argue by contradiction assuming that 
$M:=\sup_{\overline{\Omega}}(w-v)>0$.
From the assumption on the values of $w$ and $v$ at infinity,
there exists $x_0\in\overline{\Omega}$ such that $w(x_0)-v(x_0)=M$.
We define the set $\mathcal{M}\subset \overline{\Omega}$ by
\begin{align*}
  \mathcal{M}=\{x\in\overline{\Omega}\mid w(x)-v(x)=M\}
\end{align*}
and define the subsets $\Gamma_w,\Gamma_v\subset\mathcal{M}\cap\partial\Omega$ by
\begin{align*}
  \Gamma_w&=\{x\in \partial\Omega\cap\mathcal{M}\mid w(x)\leq g(x)\},\\
  \Gamma_v&=\{x\in \partial\Omega\cap\mathcal{M}\mid v(x)\geq g(x)\}.
\end{align*}
\textbf{Case 1.} The case that $\Gamma_w\cup\Gamma_v =\emptyset$.
We consider the following auxiliary function:
\begin{align*}
  \Phi_\varepsilon (x,y):=w(x)-v(y)-\phi_\varepsilon (x,y),
  \quad \phi_\varepsilon (x,y) := \left|\frac{x-y}{\varepsilon}\right|^4,
  \quad \varepsilon >0,\ x,y\in \overline{\Omega}.
\end{align*}
Let $(x_\varepsilon,y_\varepsilon)$ be a point where 
$\Phi_\varepsilon$ attains its global maximum on $\overline{\Omega}\times\overline{\Omega}$.
Then, by the inequality
$\Phi_\varepsilon (x_\varepsilon,y_\varepsilon)\geq \Phi_\varepsilon (x_0,x_0)=M$
and boundedness of $w$ and $v$, we have
\begin{align}\label{eq:CP-pf1}
  |x_\varepsilon -y_\varepsilon| \leq C\varepsilon.
\end{align}
Furthermore, since (\ref{eq:CP-pf1}) and $|x_\varepsilon|,|y_\varepsilon|\leq R$ hold for some $R>0$,
if necessary, by taking a subsequence, 
we can assume that $x_\varepsilon,y_\varepsilon\to \overline{x}\in\overline{\Omega}$ as $\varepsilon\to 0$.
Then, the upper semicontinuity of $w-v$ implies
\begin{align*}
  \limsup_{\varepsilon\to 0} \Phi_\varepsilon (x_\varepsilon,y_\varepsilon)
  \leq \limsup_{\varepsilon\to 0} \left(w(x_\varepsilon)-v(y_\varepsilon )\right)
  \leq w(\overline{x})-v(\overline{x})\leq M.
\end{align*}
On the other hand, since it follows that 
\begin{align*}
  \liminf_{\varepsilon \to 0} \Phi_\varepsilon (x_\varepsilon,y_\varepsilon)
  \geq \liminf_{\varepsilon\to 0} \Phi_\varepsilon (x_0,x_0)
  =w(x_0)-v(x_0)=M,
\end{align*}
we obtain $\lim_{\varepsilon \to 0 } \Phi_\varepsilon (x_\varepsilon,y_\varepsilon)=M$.
From this equality, we can deduce that
\begin{align}\label{eq:CP-pf2}
  |x_\varepsilon -y_\varepsilon |=o(\varepsilon),\quad
  w(x_\varepsilon)-v(y_\varepsilon)\to w(\overline{x})-v(\overline{x})=M
  \quad \text{as } \varepsilon \to 0.
\end{align}
Indeed, 
by choosing a subsequence $\{\varepsilon_k\}$ such that
\begin{align*}
  \lim_{k\to\infty}\phi_{\varepsilon_k}(x_{\varepsilon_k},y_{\varepsilon_k})
  =\limsup_{\varepsilon \to 0} \phi_\varepsilon (x_\varepsilon,y_\varepsilon),
\end{align*}
we have
\begin{align*}
  M&=\lim_{\varepsilon \to 0} \Phi_\varepsilon (x_\varepsilon,y_\varepsilon)
  =\lim_{k\to\infty} \Phi_{\varepsilon_k} (x_{\varepsilon_k},y_{\varepsilon_k})\\
  &\leq \limsup_{k\to\infty} \left(w(x_{\varepsilon_k})-v(y_{\varepsilon_k})\right)
  -\lim_{k\to\infty} \phi_{\varepsilon_k}(x_{\varepsilon_k},y_{\varepsilon_k})\\
  &\leq w(\overline{x})-v(\overline{x})
  -\limsup_{\varepsilon\to 0}\phi_\varepsilon (x_\varepsilon,y_\varepsilon)
  \leq M-\limsup_{\varepsilon\to 0}\phi_\varepsilon (x_\varepsilon,y_\varepsilon),
\end{align*}
and therefore,
$\limsup_{\varepsilon\to 0} \phi_\varepsilon (x_\varepsilon,y_\varepsilon)\leq 0$.
Combining this with $\lim_{\varepsilon\to 0 }\Phi_\varepsilon (x_\varepsilon,y_\varepsilon)=M$,
we can get (\ref{eq:CP-pf2}).

The assumption $\Gamma_w\cup\Gamma_v=\emptyset$ implies that
\begin{align*}
  \text{either}\quad 
  \overline{x}\in\Omega \quad \text{or}\quad 
  \overline{x}\in\partial\Omega,\ w(\overline{x})>g(\overline{x})>v(\overline{x}).
\end{align*}
Furthermore, 
if $\overline{x}\in\partial\Omega$, then
\begin{align}\label{eq:CP-pf10}
  \left\{
  \begin{aligned}
    &x_\varepsilon \in \partial \Omega \ \Rightarrow\ w(x_\varepsilon)>g(x_\varepsilon),\\
    &y_\varepsilon \in \partial \Omega \ \Rightarrow\ v(y_\varepsilon)<g(y_\varepsilon)
  \end{aligned}
  \right.
\end{align}
holds for sufficiently small $\varepsilon>0$ by (\ref{eq:CP-pf2}).
By (\ref{eq:CP-pf10}) and Dirichlet boundary conditions of $w$ and $v$,
it follows that 
\begin{align}\label{eq:CP-pf3}
  w(x_\varepsilon)+F_*(Dw(x_\varepsilon),D^2w(x_\varepsilon))\leq 1,\quad 
  v(y_\varepsilon)+F^*(Dv(y_\varepsilon),D^2v(y_\varepsilon))\geq 1
\end{align}
in the sense of viscosity solutions for any sufficiently small $\varepsilon >0$.

Since the function $(x,y)\mapsto \Phi_\varepsilon (x,y)=w(x)-v(y)-\phi_\varepsilon(x,y)$ 
attains its maximum on $\overline{\Omega}\times\overline{\Omega}$ at 
$(x,y)=(x_\varepsilon,y_\varepsilon)$,
by Ishii's lemma in \cite{MR1118699},
for any $\alpha> 0$,
there exist $(p,X)\in \overline{J}_{\overline{\Omega}}^{2,+}w(x_\varepsilon)$ and 
$(q,Y)\in \overline{J}_{\overline{\Omega}}^{2,-}v(y_\varepsilon)$ such that
\begin{align}\label{eq:CP-ishii-lem}
  p=D_x\phi_\varepsilon(x_\varepsilon,y_\varepsilon),\quad 
  &q=-D_y \phi_\varepsilon (x_\varepsilon,y_\varepsilon),\notag\\
  -\left(\frac{1}{\alpha}+\lVert D^2\phi_\varepsilon(x_\varepsilon,y_\varepsilon)\rVert\right) I_{2n}
  \leq 
  \begin{pmatrix}
    X & O\\
    O & -Y
  \end{pmatrix}
  &\leq  (I_{2n} +\alpha D^2\phi_\varepsilon (x_\varepsilon,y_\varepsilon))D^2\phi_\varepsilon (x_\varepsilon,y_\varepsilon).
\end{align}
We choosing $\alpha=\varepsilon^4$ for each $\varepsilon>0$,
$(p,X)$ and $(q,Y)$ with (\ref{eq:CP-ishii-lem}) satisfy
\begin{align}\label{eq:CP-pf4}
  |p-q|=0\leq \varepsilon (|p|\wedge|q|),\quad 
  \begin{pmatrix}
    X & O \\
    O & -Y
  \end{pmatrix}
  \leq \frac{o(1)}{\varepsilon^2}
  \begin{pmatrix}
    I_n & -I_n \\
    -I_n & I_n
  \end{pmatrix}
  + o(1)I_{2n} \quad \text{as } \varepsilon \to 0
\end{align}
since 
\begin{align*}
  &p=q=\frac{4}{\varepsilon}\left|\widetilde{p}\right|^2
  \widetilde{p},\\
  &D^2\phi_\varepsilon(x,y)
  =\frac{4}{\varepsilon^2}|\widetilde{p}|^2
  \begin{pmatrix}
    I_n & -I_n \\
    -I_n & I_n
  \end{pmatrix}
  +\frac{8}{\varepsilon^2}
  \begin{pmatrix}
    \widetilde{p}\otimes \widetilde{p} & -\widetilde{p}\otimes \widetilde{p}\\
    -\widetilde{p}\otimes \widetilde{p} & \widetilde{p}\otimes \widetilde{p}
  \end{pmatrix}
  \leq \frac{C}{\varepsilon^2}|\widetilde{p}|^2
  \begin{pmatrix}
    I_n & -I_n \\
    -I_n & I_n
  \end{pmatrix}
  ,\\
  &\text{where } \widetilde{p}:=\frac{x_\varepsilon -y_\varepsilon}{\varepsilon},
\end{align*}
and $\widetilde{p}=o(1)$.

Since $F(p,X)$ has singularity at $p=0$,
we need to consider two cases in order to substitute (\ref{eq:CP-pf4})
into (\ref{eq:CP-pf3}).
First, assume that $x_\varepsilon \neq y_\varepsilon$ for any small $\varepsilon>0$.
Then, (\ref{eq:CP-pf3}), (\ref{eq:CP-pf4}) and Proposition \ref{lem-CP} implies
\begin{align*}
  0< M \leq \Phi_\varepsilon (x_\varepsilon,y_\varepsilon)
  \leq w(x_\varepsilon)-v(y_\varepsilon)
  \leq F(q,Y)-F(p,X)\leq C\varepsilon (|p|\wedge|q|)+o(1)
  \quad \text{as }\varepsilon\to 0.
\end{align*}
We have a contradiction by letting $\varepsilon \to 0$.

Otherwise, 
if there is a subsequence $\{\varepsilon_k\}$ such that 
$\varepsilon_k\to 0$ and $x_{\varepsilon_k}=y_{\varepsilon_k}$, 
then it follows that  $\widetilde{p}=0$ and $X,-Y\leq O$ 
for all such $\varepsilon_k$. 
Therefore, 
\begin{align*}
  0< M \leq \Phi_{\varepsilon_k} (x_{\varepsilon_k},y_{\varepsilon_k})
  \leq w(x_{\varepsilon_k})-v(y_{\varepsilon_k})
  \leq F^*(0,Y)-F_*(0,X)\leq F^*(0,O)-F_*(0,O)=0.
\end{align*}
It is a contradiction.\\
\textbf{Case 2.} The case that $\Gamma_w\cup\Gamma_v\neq \emptyset$.
By \cite[Lemma 3.1]{MR1702961}, 
we can see that 
$\Gamma_w$ and $\Gamma_v$ are possibly empty closed sets and disjoint.
Then, there exists a smooth function $\zeta \in C^\infty(\R^n)$ 
such that $\supp\zeta \subset \mathcal{W}$, 
$\zeta\equiv 1$ on a neighborhood of $\Gamma_w$ and
$\zeta \equiv -1$ on a neighborhood of $\Gamma_v$.
By using the mollifier, 
we can assume that the function 
$\mathbf{n}$ in assumption (A\ref{assum:interior-cone}) is smooth.
Thus, the function $\chi:\R^n\to\R^n$ defined by
\begin{align*}
  \chi (x) :=\left\{\ 
  \begin{aligned}
    &\zeta(x)\mathbf{n}(x) &&\text{if }x\in \mathcal{W},\\
    &0 &&\text{if } x\not\in\mathcal{W}
  \end{aligned}
  \right.
\end{align*}
is smooth on $\R^n$.
In this case, we consider the following auxiliary function:
\begin{align*}
  \Phi_\varepsilon (x,y)= w(x)-v(y)-\phi_\varepsilon(x,y),\quad
  \phi_\varepsilon (x,y)=\left|\frac{x-y}{\varepsilon} + \chi \left(\frac{x+y}{2}\right)\right|^4,\quad
  \varepsilon >0,\ x,y\in\overline{\Omega}.
\end{align*}
Let $(x_\varepsilon,y_\varepsilon)$ be a point where 
$\Phi_\varepsilon$ attains its global maximum on $\overline{\Omega}\times\overline{\Omega}$.
Then, by condition (\ref{assum:w-and-v-1}) and the same argument
of the proof of \cite[Theorem 3.1]{MR1702961}, the following property holds:
if necessary, by taking a subsequence, 
$x_\varepsilon,y_\varepsilon\to \overline{x}\in\overline{\Omega}$ as $\varepsilon\to 0$
and $\Phi_\varepsilon(x_\varepsilon,y_\varepsilon)\to M$ as $\varepsilon\to 0$.
Furthermore, 
\begin{align}\label{eq:CP-pf5}
  \phi_\varepsilon(x_\varepsilon,y_\varepsilon)=o(1),\quad 
  w(x_\varepsilon)-v(y_\varepsilon)\to w(\overline{x})-v(\overline{x})=M \quad 
  \text{as } \varepsilon \to 0.
\end{align}

Then, by (\ref{eq:CP-pf5}), we obtain:
for sufficiently small $\varepsilon >0$,
\begin{align}\label{eq:CP-pf6}
  \left\{
  \begin{aligned}
    &x_\varepsilon,y_\varepsilon\in \Omega \text{ if } \overline{x}\not\in \Gamma_w\cup\Gamma_v;\\
    &x_\varepsilon \in \Omega\text{ and either }y_\varepsilon\in \Omega
    \text{ or } y_\varepsilon\in\partial\Omega, v(y_\varepsilon)<g(y_\varepsilon)
    \text{ if }\overline{x}\in \Gamma_w;\\
    &y_\varepsilon\in\Omega \text{ and either }
    x_\varepsilon \in \Omega \text{ or } x_\varepsilon\in\partial\Omega, w(x_\varepsilon)>g(x_\varepsilon)
    \text{ if }\overline{x}\in \Gamma_v.
  \end{aligned}
  \right.
\end{align}
Therefore, we can apply (\ref{eq:CP-pf3}) for each $\varepsilon>0$ in this case as well.

Similarly to Case 1, for each $\varepsilon>0$ and $\alpha>0$,
there exist $(p,X)\in \overline{J}_{\overline{\Omega}}^{2,+}w(x_\varepsilon)$ and
$(q,Y)\in\overline{J}_{\overline{\Omega}}^{2,-}v(y_\varepsilon)$
satisfying condition (\ref{eq:CP-ishii-lem}).
For the auxiliary function considered in this case, 
since we can compute
\begin{align}\label{eq:CP-pf9}
  &p=\frac{4}{\varepsilon}|\widetilde{p}|^2
  \left(I_n+\frac{\varepsilon}{2}{}^t\!D\chi\left(\frac{x_\varepsilon+y_\varepsilon}{2}\right)\right)
  \widetilde{p},\quad 
  q=\frac{4}{\varepsilon}|\widetilde{p}|^2
  \left(I_n-\frac{\varepsilon}{2}{}^t\!D\chi\left(\frac{x_\varepsilon+y_\varepsilon}{2}\right)\right)
  \widetilde{p},\\
  &\text{where } \widetilde{p}:=\frac{x_\varepsilon-y_\varepsilon}{\varepsilon}+\chi\left(\frac{x+y}{2}\right),\notag
\end{align}
by teh similar calculations to the proof of \cite[Theorem 3.1]{MR1702961}
and \cite[Appendix A]{MR3365833}, 
for an appropriate choice of $\alpha>0$ for each $\varepsilon>0$,
it follows that
\begin{align}\label{eq:CP-pf7}
  \begin{pmatrix}
    X & O\\
    O & -Y
  \end{pmatrix}
  \leq \frac{o(1)}{\varepsilon^2}
  \begin{pmatrix}
    I_n & -I_n \\
    -I_n & I_n
  \end{pmatrix}
  +o(1) I_{2n}\quad\text{as } \varepsilon \to 0
\end{align}
and
\begin{align}\label{eq:CP-pf8}
  |p-q|=4|\widetilde{p}|^2 \;{}^t\! D\chi\left(\frac{x_\varepsilon+y_\varepsilon}{2}\right)
  \widetilde{p}\leq C\varepsilon (|p|\wedge |q|)
\end{align}
for small $\varepsilon>0$.

Again, we consider two cases regarding the singularity of $F$.
If $p\neq 0$ and $q\neq 0$ hold for every small $\varepsilon >0$,
then (\ref{eq:CP-pf5}), (\ref{eq:CP-pf6}), (\ref{eq:CP-pf7}), (\ref{eq:CP-pf8})
and Proposition \ref{lem-CP} implies
\begin{align*}
  0<\frac{M}{2}\leq \Phi_\varepsilon(x_\varepsilon,y_\varepsilon)
  \leq w(x_\varepsilon)-v(y_\varepsilon)
  \leq F(q,Y)-F(p,X)\leq C\varepsilon (|p|\wedge|q|)+o(1) \quad \text{as }\varepsilon \to 0.
\end{align*}
Letting $\varepsilon\to 0$, we get a contradiction.

If there is a subsequence $\{\varepsilon_k\}$ such that
$\varepsilon_k\to 0$, $p=0$,
then it is necessary that $\widetilde{p}=0$
holds for small $\varepsilon_k$ by 
(\ref{eq:CP-pf9}).
This implies that $q=0$ and then,
we have $p=q=0$, $X,-Y\leq O$ and 
\begin{align*}
  0<\frac{M}{2}\leq \Phi_{\varepsilon_k}(x_{\varepsilon_k},y_{\varepsilon_k})
  \leq w(x_{\varepsilon_k})-v(y_{\varepsilon_k})
  \leq F^*(0,Y)-F_*(0,X)\leq F^*(0,O)-F_*(0,O)=0.
\end{align*}
It is a contradiction.
For the case that $q=0$ for a subsequence
$\{\varepsilon_k\}$, we can get a contradiction by the same argument.
\end{proof}

\section{Game interpretation}
In this section, 
we consider the deterministic two-person games for our equations, 
which is an extension of the game proposed in \cite{MR2200259}.

\subsection{The game setting}
We consider the game played by two players named Player I and 
Player II. 
The set of direction choice $\mathcal{D}$ for Player I 
and the set of sign choice $\mathcal{S}$ for Player II 
are defined by 
\begin{align}\label{dfn:strategy-sets}
  \begin{aligned}
  \mathcal{D} &= \mathcal{D}_1 \times \mathcal{D}_2,\\
  \mathcal{D}_1&=\{ \mathbf{v}=(v^1,v^2)\in \R^n\times\R^n \mid |v^1|=|v^2|=1 \},\\ 
  \mathcal{D}_2&=\{ \mathbf{w}=(w^1,\ldots, w^m)\in \R^{m\times m} \mid \{w^i\} \text{: orthonormal basis of }\R^m\},\\
  \mathcal{S}&=\{ \mathbf{b}=(b^1,\ldots,b^m)\in \{\pm 1\}^{m} \}.
  \end{aligned}
\end{align}
Let $D_0$ be a given open set, 
$G$ be a given continuous function defined on a neighborhood of $\partial D_0$,
$x\in\R^n \setminus \overline{D}_0$ be an initial state 
and $\varepsilon>0$ be a step size for space. 
In the begining, there is a marker at point $x$. 
On each turn, Player I and Player II choose 
$(\mathbf{v},\mathbf{w})\in\mathcal{D}$ 
and $\mathbf{b}\in\mathcal{S}$, respectively, and depending on their choices, 
the marker moves to the next position according to the following rules: 
first, set $y_0=x$. 
At each $j$-th turn, 
if the marker was at point $y_{j-1}\in\R^n\setminus \overline{D}_0$ at the end of 
$(j-1)$-th turn, 
\begin{enumerate}[(1)]
  \item Player I chooses the set of directions 
  $(\mathbf{v}_j,\mathbf{w}_j)\in\mathcal{D}$.
  \item Player II chooses the set of $m$ signs 
  $\mathbf{b}_j\in\mathcal{S}$ 
  after seeing Player I's choice.
  \item The marker moves to the next position $y_j$ determined by
  \begin{align*}
    y_j:=y_{j-1}+\delta^\varepsilon(\mathbf{v}_j,\mathbf{w}_j,\mathbf{b}_j),
  \end{align*}
  where
  \begin{align*}
    \delta^\varepsilon(\mathbf{v},\mathbf{w},\mathbf{b})
    :=\varepsilon \sqrt{2}\sum_{i=1}^m b^i\sigma(v^1)w^i + \varepsilon^2 c(v^1)v^2.
  \end{align*}
\end{enumerate}
Players repeat these steps and the game ends 
when the marker reaches the set $\overline{D}_0$.
Suppose the number of steps taken for the marker to reach $\overline{D}_0$ is $N\in\N$,
then we define the Player I's payoff in this game as $\varepsilon^2 N + G(y_N)$.
If the game never ends, we define the payoff is $+\infty$.
Player I's goal of the game is to hit the marker to $\overline{D}_0$ 
and to minimize the payoff, while player II's goal is to maximize it.
Player I makes a rational choice of $(\mathbf{v}_j,\mathbf{w}_j)$ each turn 
so as to minimize the payoff until the marker reaches $\overline{D}_0$, 
and Player II makes a rational choice of $\mathbf{b}_j$ 
each turn to maximize it. 
Let $N^\varepsilon(x)$ and $y^\varepsilon (x)=y_{N^\varepsilon(x)}$ be the number of steps for the marker to reach $\overline{D}_0$ 
and the point where the marker hits $\overline{D}_0$, respectively,
when each player continues to make rational choices.
In the view of the above, 
we define the value of the game $U^\varepsilon(x)$, 
although it may not be well-defined, as follows
\begin{align*}
  U^\varepsilon (x)=\varepsilon^2 N^\varepsilon (x) + G(y^\varepsilon (x)).
\end{align*}
(If the game never ends, then we define $U^\varepsilon (x)=+\infty$.)
If $U^\varepsilon$ exists, 
then $U^\varepsilon$ satisfies the following dynamic programming principle
\begin{align}\label{DPP-U}
  U^\varepsilon (x)
  =\inf_{(\mathbf{v},\mathbf{w})\in \mathcal{D}}\sup_{\mathbf{b}\in\mathcal{S}}
  \left\{
  \begin{aligned}
    &\varepsilon^2 +U^\varepsilon (x+\delta^\varepsilon(\mathbf{v},\mathbf{w},\mathbf{b}))
    &&\text{if}\ x+\delta^\varepsilon(\mathbf{v},\mathbf{w},\mathbf{b})\in \R^n\setminus \overline{D}_0, \\
    &\varepsilon^2 +G(x+\delta^\varepsilon (\mathbf{v},\mathbf{w},\mathbf{b}))
    &&\text{if}\ x+\delta^\varepsilon(\mathbf{v},\mathbf{w},\mathbf{b})\in \overline{D}_0.
  \end{aligned}
  \right.
\end{align}

We can define the value of the game $U^\varepsilon$ in a formal way in fact
(we mention this in subsection 3.2 and 3.3)
and we consider two functions
\begin{align}\label{eq:def-U}
  \overline{U}(x):= \limsup_{\substack{\R^n\setminus \overline{D}_0\ni y\to x \\ \varepsilon \to 0}}
  U^\varepsilon(y),\quad 
  \underline{U}(x):=\liminf_{\substack{\R^n\setminus \overline{D}_0\ni y\to x \\ \varepsilon \to 0}}
  U^\varepsilon(y),\quad 
  x\in \R^n\setminus D_0.
\end{align}
Then, we have the following result.
\begin{thm}\label{thm:U-conv}
Assume $(\mathrm{A}\ref{assum:homogenity})$-$(\mathrm{A}\ref{assum:rank})$. 
Then $\overline{U}$ $($resp., $\underline{U}$$)$ satisfies 
\begin{align*}
\left\{
  \begin{aligned}
  F(D\overline{U},D^2\overline{U})&\leq 1
  &&\text{in } \{\overline{U}<\infty\}\cap (\R^n\setminus \overline{D}_0),\\
  \overline{U}&\leq G
  &&\text{on } \{\overline{U}<\infty\}\cap \partial D_0
  \end{aligned}
\right.
\end{align*}
in the sense of viscosity solutions $($resp.,
\begin{align*}
\left\{
  \begin{aligned}
  F(D\underline{U},D^2\underline{U})&\geq 1
  &&\text{in } \{\overline{U}<\infty\}\cap (\R^n\setminus \overline{D}_0),\\
  \underline{U}&\geq G
  &&\text{on } \{\overline{U}<\infty\}\cap \partial D_0
  \end{aligned}
\right.
\end{align*}
in the sense of viscosity solutions $)$.
\end{thm}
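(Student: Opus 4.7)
I would first apply the Kruzhkov transformation $\psi(r)=1-e^{-r}$ of Proposition~\ref{cor:trans}. Setting $u^\varepsilon:=\psi\circ U^\varepsilon\in(-\infty,1]$ and applying the decreasing map $r\mapsto e^{-r}$ to (\ref{DPP-U}) (which swaps the $\inf$--$\sup$ back to $\sup$--$\inf$), one obtains the linear DPP
\begin{equation*}
  u^\varepsilon(x)=(1-e^{-\varepsilon^2})+e^{-\varepsilon^2}
  \inf_{(\mathbf{v},\mathbf{w})\in\mathcal{D}}\sup_{\mathbf{b}\in\mathcal{S}}
  u^\varepsilon_{\mathrm{out}}\bigl(x+\delta^\varepsilon(\mathbf{v},\mathbf{w},\mathbf{b})\bigr),
\end{equation*}
where $u^\varepsilon_{\mathrm{out}}=u^\varepsilon$ on $\R^n\setminus\overline{D}_0$ and $u^\varepsilon_{\mathrm{out}}=\psi\circ G$ on $\overline{D}_0$. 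The plan is to show that the half-relaxed limits $\overline u,\underline u$ of $u^\varepsilon$ are viscosity sub- and supersolutions of $u+F(Du,D^2u)=1$ on $\R^n\setminus\overline{D}_0$ with boundary datum $\psi\circ G$, and then invoke Proposition~\ref{cor:trans} (using $\{\overline U<\infty\}=\{\overline u<1\}$) to transfer the conclusions to $\overline U,\underline U$.

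The core step is a Taylor analysis of the inf--sup. For the subsolution claim, let $\phi\in C^2$ with $D\phi(x_0)\neq 0$ and $\overline u-\phi$ attaining a strict local maximum at $x_0\in\R^n\setminus\overline{D}_0$; the standard half-relaxed-limit argument produces $\varepsilon_k\to 0$ and $x_k\to x_0$ at which $u^{\varepsilon_k}-\phi$ is maximized and $u^{\varepsilon_k}(x_k)\to\overline u(x_0)$. Inserting $u^{\varepsilon_k}(x_k+\delta)\leq u^{\varepsilon_k}(x_k)+\phi(x_k+\delta)-\phi(x_k)$ into the linear DPP and rearranging gives
\begin{equation*}
  \frac{1-e^{-\varepsilon_k^2}}{\varepsilon_k^2}\bigl(u^{\varepsilon_k}(x_k)-1\bigr)
  \leq\frac{e^{-\varepsilon_k^2}}{\varepsilon_k^2}
  \inf_{(\mathbf{v},\mathbf{w})}\sup_{\mathbf{b}}\bigl[\phi(x_k+\delta^{\varepsilon_k})-\phi(x_k)\bigr].
\end{equation*}
Expanding $\phi(x+\delta)-\phi(x)=\langle D\phi,\delta\rangle+\tfrac12\langle D^2\phi\,\delta,\delta\rangle+o(|\delta|^2)$ together with $\delta^\varepsilon=\varepsilon\sqrt{2}\sum_ib^i\sigma(v^1)w^i+\varepsilon^2c(v^1)v^2$, the $O(\varepsilon)$ contribution is $\varepsilon\sqrt{2}\sum_ib^i\langle\sigma(v^1)^{\!T}D\phi,w^i\rangle$, with supremum over $\mathbf{b}\in\{\pm1\}^m$ equal to $\varepsilon\sqrt{2}\sum_i|\langle\sigma(v^1)^{\!T}D\phi,w^i\rangle|$. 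Player~I, who minimizes, is thereby forced to pick $v^1\parallel D\phi/|D\phi|$: any other choice leaves $\sigma(v^1)^{\!T}D\phi\neq 0$ (since $\ker\sigma(v^1)^{\!T}=\langle v^1\rangle$ by (A\ref{assum:rank})) and triggers a strictly positive $O(\varepsilon)$ penalty swamping the $O(\varepsilon^2)$ gain. With $v^1$ so fixed, the quadratic Taylor contribution becomes $\varepsilon^2\sum_{i,j}b^ib^j\langle w^i,\sigma(v^1)^{\!T}D^2\phi\,\sigma(v^1)w^j\rangle+o(\varepsilon^2)$; taking $\mathbf{w}$ to be an orthonormal eigenbasis of $\sigma(v^1)^{\!T}D^2\phi(x_0)\sigma(v^1)$ renders this independent of $\mathbf{b}$ and equal to $\varepsilon^2\tr[\sigma(v^1)^{\!T}D^2\phi\,\sigma(v^1)]$. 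Optimizing the drift $\varepsilon^2c(v^1)\langle D\phi,v^2\rangle$ over $|v^2|=1$ yields $-\varepsilon^2c(D\phi(x_0))$ by the $1$-homogeneity of $c$, and combining,
\begin{equation*}
  \inf_{(\mathbf{v},\mathbf{w})}\sup_{\mathbf{b}}\bigl[\phi(x_k+\delta^{\varepsilon_k})-\phi(x_k)\bigr]
  =-\varepsilon_k^2F\bigl(D\phi(x_0),D^2\phi(x_0)\bigr)+o(\varepsilon_k^2).
\end{equation*}
Passing to the limit delivers $\overline u(x_0)+F(D\phi,D^2\phi)(x_0)\leq 1$.

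The boundary inequality $\overline u\leq\psi\circ G$ on $\partial D_0$ follows from a one-step game argument: near $x_0\in\partial D_0$, Player~I can pick $v^2$ pointing into $D_0$ so that the marker enters $\overline{D}_0$ after $O(1)$ steps uniformly in $\mathbf{b}$, giving $U^\varepsilon(x)\leq O(\varepsilon^2)+G$ at a nearby boundary point and hence $\overline U(x_0)\leq G(x_0)$ by continuity of $G$. The supersolution property of $\underline u$ is proved symmetrically: use $u^{\varepsilon_k}(x_k+\delta)\geq u^{\varepsilon_k}(x_k)+\phi(x_k+\delta)-\phi(x_k)$ and bound the $\inf\sup$ from below by rerunning the Taylor computation; the same cancellation of the linear term and trace identity deliver $\underline u(x_0)+F(D\phi,D^2\phi)(x_0)\geq 1$.

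The main obstacle is the rigorous version of the asymptotic just sketched: the $o(\varepsilon_k^2)$ remainders must be uniform over the (compact) strategy spaces $\mathcal{D}\times\mathcal{S}$, and the singularity of $F$ at $p=0$ forces one to restrict to test functions with $D\phi(x_0)\neq 0$. A closely related delicate point is the quantitative ``forcing'' of Player~I's optimal choice $v^1\parallel D\phi/|D\phi|$ out of (A\ref{assum:rank}): one must show that an angular deviation $\theta$ creates a penalty of order $\theta\,\varepsilon\,|D\phi|$ that Player~II can actually realize through the sign choice $\mathbf{b}$. Once these asymptotics are in hand, the transfer to $\overline U,\underline U$ via Proposition~\ref{cor:trans} is immediate because $u^\varepsilon\leq 1$ passes to the half-relaxed limits and $\psi\circ G<1$ is automatic wherever $G$ is finite.
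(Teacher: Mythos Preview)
Your overall strategy---Kruzhkov transform, half-relaxed limits for the transformed value function, Taylor expansion of the DPP, and transfer via Proposition~\ref{cor:trans}---is exactly the route the paper takes (the theorem is deduced from Theorem~\ref{thm:game-conv} and Proposition~\ref{cor:trans}), and your interior subsolution argument matches Lemma~\ref{lem:lem-game-conv}(1) closely.

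There is, however, a genuine gap in your treatment of the boundary condition. You propose to show the \emph{classical} inequality $\overline U(x_0)\leq G(x_0)$ by having Player~I drive the marker into $\overline D_0$ in $O(1)$ steps ``uniformly in $\mathbf b$''. This fails in general: the drift term $\varepsilon^2 c(v^1)v^2$ that Player~I controls is only $O(\varepsilon^2)$, while Player~II's contribution $\varepsilon\sqrt{2}\sum_i b^i\sigma(v^1)w^i$ is $O(\varepsilon)$. Even if Player~I picks $v^1$ normal to $\partial D_0$ so that Player~II's moves are tangential by (A\ref{assum:rank}), on a curved boundary a tangential step of size $\varepsilon$ changes the distance to $D_0$ by an amount of order $\varepsilon^2$ times the curvature; when this curvature contribution dominates $c(v^1)$ (or when $c(v^1)=0$, which (A\ref{assum:homogenity}) permits), Player~II can keep the marker out of $\overline D_0$ indefinitely. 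What the theorem actually asserts is the \emph{viscosity} boundary condition: either $\overline u(x_0)\leq g(x_0)$, or the interior inequality $\overline u(x_0)+F_*(D\phi,D^2\phi)(x_0)\leq 1$ holds for every test function touching from above at $x_0$. The paper (proof of Theorem~\ref{thm:game-conv}) obtains this by assuming $\overline u(x_0)>g(x_0)$ and observing that the branch of the DPP returning the value $g$ is then never the maximum, so the interior Taylor computation goes through unchanged at $x_0$.

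A secondary gap is your dismissal of test functions with $D\phi(x_0)=0$. For singular geometric equations the viscosity definition uses $F_*,F^*$ at $p=0$, and one must either handle this case directly (as the paper does in Case~2 and Case~$2'$ of the proof of Theorem~\ref{thm:game-conv}, computing $\min_{v^1}\tr[\sigma(v^1){}^t\sigma(v^1)D^2\phi]$ and comparing to $F_*(0,D^2\phi)$) or invoke a reduction lemma that you have not stated.
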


This theorem is 
obtained by combining Theorem \ref{thm:game-conv} in subsection 3.3 
and Corollary \ref{cor:trans}.

\subsection{Change of variables for the value functions}
We need change of variables for the value function $U^\varepsilon$ 
since it can be $+\infty$ and 
we would like to treat the value function with real-valued
defined on the entire set $\R^n\setminus \overline{D}_0$. 
Consider the function $\psi$ defined by (\ref{dfn:psi}),
and extend it to $\psi :\R\cup\{+\infty\}\to (-\infty,1]$ 
as
\begin{align*}
  \psi(r) = \left\{
  \begin{aligned}
    & 1-e^{-r} &&\text{if } r<+\infty,\\
    & 1 && \text{if } r=+\infty
  \end{aligned}
  \right.
\end{align*}
and let  
\begin{align}\label{trans-value}
  u^\varepsilon (x) = \psi(U^\varepsilon (x)).
\end{align}
Substituting (\ref{trans-value}) into 
the dynamic programming principle (\ref{DPP-U}) and calculating that, 
we have the dynamic programming principle for $u^\varepsilon$ as follows:
\begin{align}\label{discounted-DPP}
  u^\varepsilon (x)
  =\min_{(\mathbf{v},\mathbf{w})\in \mathcal{D}}\max_{\mathbf{b}\in\mathcal{S}}
  \left\{
  \begin{aligned}
    &1-e^{-\varepsilon^2}+e^{-\varepsilon^2}
    u^\varepsilon (x+\delta^\varepsilon(\mathbf{v},\mathbf{w},\mathbf{b}))
    &&\text{if}\ x+\delta^\varepsilon(\mathbf{v},\mathbf{w},\mathbf{b})\in \R^n\setminus \overline{D}_0,\\
    &1-e^{-\varepsilon^2}+e^{-\varepsilon^2}\psi (G(x+\delta^\varepsilon(\mathbf{v},\mathbf{w},\mathbf{b})))
    &&\text{if}\ x+\delta^\varepsilon(\mathbf{v},\mathbf{w},\mathbf{b})\in \overline{D}_0.
  \end{aligned}
  \right.
\end{align}
This is a similar representation 
to the game with discounting 
for stationary boundary value problems studied in \cite{MR2681474}.

Similarly to (\ref{eq:def-U}), considering 
\begin{align*}
  \overline{u}(x)
  :=\limsup_{\substack{\R^n\setminus \overline{D}_0\ni y\to x\\ \varepsilon\to 0 }}
  u^\varepsilon (y)\quad \text{and} \quad
  \underline{u}(x)
  :=\liminf_{\substack{\R^n\setminus \overline{D}_0\ni y\to x \\ \varepsilon\to 0}}
  u^\varepsilon (y),
\end{align*}
we have the result below.
\begin{thm}\label{thm:u-conv}
The function $\overline{u}$ $($resp., $\underline{u}$ $)$ satisfies
\begin{align*}
  \left\{
    \begin{aligned}
    \overline{u} + F(D\overline{u},D^2\overline{u})&\leq 1
    &&\text{in } \R^n\setminus \overline{D}_0,\\
    \overline{u}&\leq \psi \circ G
    &&\text{on } \partial D_0
    \end{aligned}
  \right.
  \end{align*}
  in the sense of viscosity solutions $($resp.,
\begin{align*}
\left\{
  \begin{aligned}
  \underline{u} + F(D\underline{u},D^2\underline{u})&\geq 1
  &&\text{in } \R^n\setminus \overline{D}_0,\\
  \underline{u}&\geq \psi\circ G
  &&\text{on } \partial D_0
  \end{aligned}
\right.
\end{align*}
in the sense of viscosity solutions $)$.
\end{thm}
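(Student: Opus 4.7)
The plan is to apply the half-relaxed-limits method adapted to the game dynamic programming principle (\ref{discounted-DPP}), in the style of Kohn–Serfaty. The core ingredient is a single Taylor-expansion identity for the min-max of the one-step displacement: for any $\varphi \in C^2$ and any $x$ with $D\varphi(x) \neq 0$,
\[
  \inf_{(\mathbf{v},\mathbf{w}) \in \mathcal{D}} \sup_{\mathbf{b} \in \mathcal{S}} \bigl[\varphi(x + \delta^\varepsilon(\mathbf{v},\mathbf{w},\mathbf{b})) - \varphi(x)\bigr] = -\varepsilon^2 F(D\varphi(x), D^2\varphi(x)) + o(\varepsilon^2)
\]
as $\varepsilon \to 0$, with the error uniform on compact sets where $D\varphi$ stays bounded away from $0$. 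Once this identity is established, the subsolution and supersolution properties in $\R^n \setminus \overline D_0$ follow from standard test-function arguments.

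To prove the identity I expand $\varphi(x + \delta^\varepsilon) - \varphi(x) = D\varphi \cdot \delta^\varepsilon + \tfrac12 \langle D^2\varphi\, \delta^\varepsilon,\, \delta^\varepsilon\rangle + O(\varepsilon^3)$ and observe that the $O(\varepsilon)$ piece $\varepsilon\sqrt 2 \sum_i b^i D\varphi \cdot \sigma(v^1) w^i$ is annihilated exactly when Player I picks $v^1 = \pm D\varphi/|D\varphi|$: assumption (A\ref{assum:rank}) gives $\mathrm{Im}\,\sigma(v^1) = \langle v^1\rangle^\perp$, so every $\sigma(v^1) w^i$ is orthogonal to $D\varphi$. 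For any other $v^1$ this term is strictly positive after the sup over $\mathbf{b}$ and is rejected by the infimum. With $v^1$ so fixed, what remains at order $\varepsilon^2$ is
\[
  \varepsilon^2 c(v^1)\, D\varphi \cdot v^2 + \varepsilon^2 \sum_{i,j} b^i b^j \langle D^2\varphi\, \sigma(v^1) w^i,\, \sigma(v^1) w^j\rangle + o(\varepsilon^2).
\]
The first summand does not depend on $\mathbf{b}$ and Player I minimizes it by $v^2 = -D\varphi/|D\varphi|$, producing $-\varepsilon^2 c(D\varphi)$ via the $1$-homogeneity of $c$ in (A\ref{assum:homogenity}). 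For the second, set $M := {}^t\sigma(v^1)\, D^2\varphi\, \sigma(v^1) \in \mathbb{S}^m$; averaging over i.i.d.\ Rademacher coordinates shows $\sup_{\mathbf{b}} \sum b^i b^j \langle M w^i, w^j\rangle \geq \tr M$ for every orthonormal $\{w^i\}$, with equality attained when $\{w^i\}$ is the eigenbasis of $M$, since the quadratic form then becomes $\sum_i \lambda_i (b^i)^2 = \tr M$, constant in $\mathbf{b}$. Summing the two pieces gives $-\varepsilon^2[c(D\varphi) - \tr({}^t\sigma(D\varphi)\, D^2\varphi\, \sigma(D\varphi))] + o(\varepsilon^2) = -\varepsilon^2 F(D\varphi, D^2\varphi) + o(\varepsilon^2)$.

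For the interior subsolution property at $x_0 \in \R^n \setminus \overline D_0$, let $\varphi \in C^2$ strictly touch $\overline u$ from above at $x_0$; standard extraction produces $x_\varepsilon \in \R^n \setminus \overline D_0$ with $x_\varepsilon \to x_0$, $u^\varepsilon(x_\varepsilon) \to \overline u(x_0)$, and $(u^\varepsilon - \varphi)(x_\varepsilon) \geq (u^\varepsilon - \varphi)(y) - o(\varepsilon^2)$ on a fixed neighborhood of $x_0$, with the first branch of (\ref{discounted-DPP}) applicable for small $\varepsilon$. Applying the DPP at $x_\varepsilon$ and the bound $u^\varepsilon(x_\varepsilon + \delta^\varepsilon) \leq u^\varepsilon(x_\varepsilon) + \varphi(x_\varepsilon + \delta^\varepsilon) - \varphi(x_\varepsilon) + o(\varepsilon^2)$, then taking the infimum over $(\mathbf{v},\mathbf{w})$, rearranging via $1 - e^{-\varepsilon^2} = \varepsilon^2 + O(\varepsilon^4)$, dividing by $\varepsilon^2$ and invoking the lemma yields $\overline u(x_0) - 1 \leq -F(D\varphi(x_0), D^2\varphi(x_0))$. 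The supersolution argument is symmetric, testing from below. The singular case $D\varphi(x_0) = 0$ is dispatched by the convention $F_*(0, O) = F^*(0, O) = 0$ stated in the introduction: the test function can be chosen with arbitrarily small second derivative at $x_0$, so both inequalities reduce to $0 \leq 0$.

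For the boundary condition at $x_0 \in \partial D_0$ the strategy is to reduce to the interior argument. For subsolution, suppose $\overline u(x_0) > \psi(G(x_0))$; by perturbing $\varphi$ I may arrange $\varphi > \psi \circ G$ on a neighborhood of $x_0$ in $\overline D_0$, so that the bound $u^\varepsilon(x_\varepsilon + \delta^\varepsilon) \leq u^\varepsilon(x_\varepsilon) + \varphi(x_\varepsilon + \delta^\varepsilon) - \varphi(x_\varepsilon) + o(\varepsilon^2)$ holds uniformly whether the shifted point is outside $\overline D_0$ (where the RHS bounds $u^\varepsilon$ directly) or inside (where it bounds $\psi(G(\cdot))$, using the extended $\varphi$). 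The interior computation then goes through verbatim and yields $\overline u(x_0) + F(D\varphi(x_0), D^2\varphi(x_0)) \leq 1$, establishing the viscosity boundary inequality; the supersolution case is symmetric. The main obstacle I anticipate is propagating the $o(\varepsilon^2)$ error uniformly over the compact strategy sets $\mathcal D$ and $\mathcal S$, which ultimately relies on the Lipschitz regularity of $\sigma$ and $c$ on $\partial B(0,1)$ from (A\ref{assum:Lipschitz-conti}) together with the continuity of the second derivatives of the smooth test function.
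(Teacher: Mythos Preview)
Your overall approach coincides with the paper's: reduce to a one-step Taylor-expansion lemma for the min--max operator, then run the half-relaxed-limits argument against the discounted DPP. The boundary-condition reduction you sketch also matches the paper. There are, however, two genuine gaps.

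\textbf{The singular case $D\varphi(x_0)=0$.} Your dismissal is incorrect. In the viscosity framework you do not get to choose the test function; you must verify, for \emph{every} $\varphi$ touching at $x_0$, the inequality $\overline u(x_0)+F_*(0,D^2\varphi(x_0))\le 1$ (resp.\ $\underline u(x_0)+F^*(0,D^2\varphi(x_0))\ge 1$). Here $F_*(0,X)=\liminf_{p\to 0}\bigl[-\tr(\sigma(p){}^t\!\sigma(p)X)+c(p)\bigr]$, which depends on $X$ and is not $0$ in general, so nothing ``reduces to $0\le 0$''. The paper handles this case by distinguishing whether $D\varphi(x_k)\neq 0$ along a subsequence (then pass to the limit inside the non-singular estimate) or $D\varphi(x_k)\equiv 0$ (then expand directly, observe the first-order term vanishes, and control the second-order term uniformly in $v^1$ by $\tr[\sigma(v^1){}^t\!\sigma(v^1)D^2\varphi(x_k)]$, which after taking $\limsup$/$\liminf$ over $v^1$ reproduces $F_*$ or $F^*$).

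\textbf{The lower bound in your min--max lemma.} Your sentence ``for any other $v^1$ this term is strictly positive after the $\sup$ over $\mathbf b$ and is rejected by the infimum'' is not a proof: when $v^1$ is close to $\pm D\varphi/|D\varphi|$ but not equal, the first-order term $\varepsilon\sqrt2\,\max_i|\langle D\varphi,\sigma(v^1)w^i\rangle|$ may be only $O(\varepsilon^2)$, the same order as the second-order corrections, so positivity alone does not force the correct lower bound. The paper's Lemma splits into two regimes: either this first-order term is $\ge 2\varepsilon^2(m\Lambda+|D\varphi|\,\|c\|_\infty)$, in which case it dominates all second-order contributions; or it is smaller, which via (A\ref{assum:Lipschitz-conti}), (A\ref{assum:evenness}), (A\ref{assum:rank}) forces $|v^1\mp D\varphi/|D\varphi||\le C(1+\Lambda/|D\varphi|)\varepsilon$, and then the second-order terms themselves are within $o(\varepsilon^2)$ of the desired value. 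This is also why the lower-bound error in the paper carries a factor $(1+1/|D\varphi|)$, consistent with your caveat ``uniform on compact sets where $D\varphi$ stays bounded away from $0$''. Your Rademacher-averaging observation $\sup_{\mathbf b}\sum b^ib^j\langle Mw^i,w^j\rangle\ge\tr M$ is correct and useful for the easy direction, but you still need the two-regime argument to close the hard direction.
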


Theorem \ref{thm:u-conv} is a straight forward result of
Theorem \ref{thm:game-conv} below and the proof is given 
at the end of Section 3.

\begin{prop}\label{prop:fixed-point}
Assume that 
$(\mathrm{A}\ref{assum:homogenity})$-$(\mathrm{A}\ref{assum:rank})$
hold.
Let $\Omega\subset \R^n$ be an open set, 
$\mathcal{B}(\Omega)$ be a complete normed space 
of bounded real valued functions on $\Omega$ 
and $g$ be a bounded contiuous function defined on a neighborhood 
of $\partial \Omega$.
Then, for each small $\varepsilon>0$, the operator 
$R^\varepsilon: \mathcal{B}(\Omega)\to \mathcal{B}(\Omega)$ 
defined by 
\begin{align}\label{DPP-u-2}
  R^\varepsilon [\phi](x)
  =\inf_{(\mathbf{v},\mathbf{w})\in \mathcal{D}}\max_{\mathbf{b}\in\mathcal{S}}
  \left\{
    \begin{aligned}
      &1-e^{-\varepsilon^2}+e^{-\varepsilon^2}
      \phi (x+\delta^\varepsilon(\mathbf{v},\mathbf{w},\mathbf{b}))
      &&\text{if}\ x+\delta^\varepsilon(\mathbf{v},\mathbf{w},\mathbf{b}) \in \Omega,\\
      &1-e^{-\varepsilon^2}+e^{-\varepsilon^2}
      g (x+\delta^\varepsilon(\mathbf{v},\mathbf{w},\mathbf{b}))
      &&\text{if}\ x+\delta^\varepsilon(\mathbf{v},\mathbf{w},\mathbf{b})\not \in \Omega
    \end{aligned}
    \right.
\end{align}
is a contraction mapping on $\mathcal{B}(\Omega)$. 
In particular, there is a unique fixed point $\phi^\varepsilon\in \mathcal{B}(\Omega)$, 
which satisfies $R^\varepsilon[\phi^\varepsilon]=\phi^\varepsilon$ in $\Omega$.
\end{prop}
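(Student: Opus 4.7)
The plan is to apply the Banach fixed point theorem to $R^\varepsilon$ on $\mathcal{B}(\Omega)$ equipped with the supremum norm. Two things require verification: that $R^\varepsilon$ is well-defined as a self-map of $\mathcal{B}(\Omega)$, and that it is a strict contraction with constant $<1$.

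For well-definedness, I would first observe that by assumption (A\ref{assum:homogenity}) together with the continuity of $\sigma,c$ on $\partial B(0,1)$ (assumption (A\ref{assum:Lipschitz-conti})), the displacement $\delta^\varepsilon(\mathbf{v},\mathbf{w},\mathbf{b})$ is bounded uniformly in $(\mathbf{v},\mathbf{w},\mathbf{b})$ by some $C\varepsilon$; here $|v^1|=1$ is used so that $\sigma(v^1)$ and $c(v^1)$ are controlled, and $|\sum_i b^i w^i|=\sqrt{m}$ since $\{w^i\}$ is orthonormal. Consequently, if $x\in\Omega$ and $x+\delta^\varepsilon(\mathbf{v},\mathbf{w},\mathbf{b})\notin\Omega$, this exit point lies within $C\varepsilon$ of $\partial\Omega$, so by restricting to sufficiently small $\varepsilon>0$ we ensure it belongs to the neighborhood on which $g$ is defined, making the expression in (\ref{DPP-u-2}) meaningful. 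The pointwise bound
\[
\bigabs{R^\varepsilon[\phi](x)}\leq 1+\max\bigl(\|\phi\|_\infty,\|g\|_\infty\bigr)
\]
is immediate from the form of the integrand, so $R^\varepsilon[\phi]\in\mathcal{B}(\Omega)$.

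The contraction estimate is the heart of the argument. The crucial observation is that for fixed $x\in\Omega$ and fixed $(\mathbf{v},\mathbf{w},\mathbf{b})\in\mathcal{D}\times\mathcal{S}$, both the target $y:=x+\delta^\varepsilon(\mathbf{v},\mathbf{w},\mathbf{b})$ and the decision of whether $y\in\Omega$ are independent of the input function. Comparing the integrands for $\phi$ and $\psi$ branch by branch: if $y\in\Omega$, the difference equals $e^{-\varepsilon^2}\bigl(\phi(y)-\psi(y)\bigr)$, with absolute value at most $e^{-\varepsilon^2}\|\phi-\psi\|_\infty$; if $y\notin\Omega$, both integrands equal $1-e^{-\varepsilon^2}+e^{-\varepsilon^2}g(y)$, so their difference is zero. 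Hence in either case the pointwise difference of integrands is bounded by $e^{-\varepsilon^2}\|\phi-\psi\|_\infty$. Combining this with the elementary inequality
\[
\biggabs{\inf_a\sup_b F(a,b)-\inf_a\sup_b G(a,b)}\leq \sup_{a,b}\bigabs{F(a,b)-G(a,b)}
\]
gives $\|R^\varepsilon[\phi]-R^\varepsilon[\psi]\|_\infty\leq e^{-\varepsilon^2}\|\phi-\psi\|_\infty$. Since $e^{-\varepsilon^2}<1$, $R^\varepsilon$ is a strict contraction, and the Banach fixed point theorem provides the unique $\phi^\varepsilon$.

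There is no substantive obstacle here; the argument is almost mechanical once the right observation is made. The only points requiring genuine care are (i) ensuring $\varepsilon$ is small enough that $g$ is always evaluated inside its domain of definition, and (ii) exploiting the fact that the branching in (\ref{DPP-u-2}) depends only on $(x,\mathbf{v},\mathbf{w},\mathbf{b})$ and not on the input function, which is what allows the branch-by-branch comparison yielding the clean contraction constant $e^{-\varepsilon^2}$.
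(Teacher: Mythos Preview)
Your proof is correct and follows essentially the same approach as the paper: both establish the contraction estimate $\|R^\varepsilon[\phi]-R^\varepsilon[\psi]\|_\infty\leq e^{-\varepsilon^2}\|\phi-\psi\|_\infty$ by exploiting that the branching depends only on $(x,\mathbf{v},\mathbf{w},\mathbf{b})$, then invoke the Banach fixed point theorem. You are in fact more careful than the paper about well-definedness (the smallness of $\varepsilon$ needed so that $g$ is evaluated inside its domain, and the boundedness of $R^\varepsilon[\phi]$), points the paper leaves implicit.
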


\begin{proof}
Take $\phi_1,\phi_2\in \mathcal{B}(\Omega)$ arbitrarily. 
For every $\varepsilon>0$ and $x\in\Omega$, it follows that 
\begin{align*}
  e^{-\varepsilon^2}\phi_1(x+\delta^\varepsilon(\mathbf{v},\mathbf{w},\mathbf{b}))
  \leq e^{-\varepsilon^2}\phi_2(x+\delta^\varepsilon(\mathbf{v},\mathbf{w},\mathbf{b}))
  +e^{-\varepsilon^2}\sup_{\Omega}|\phi_1-\phi_2|
\end{align*}
whenever $x+\delta^\varepsilon(\mathbf{v},\mathbf{w},\mathbf{b})\in\Omega$.
This implies that 
\begin{align*}
  R^\varepsilon[\phi_1](x)\leq R^\varepsilon[\phi_2](x)+e^{-\varepsilon^2}\sup_{\Omega}|\phi_1-\phi_2|
\end{align*}
for all $x\in\Omega$. By symmetricity, we have 
\begin{align*}
  |R^\varepsilon [\phi_1](x)-R^\varepsilon [\phi_2](x)|\leq e^{-\varepsilon^2}\sup_{\Omega}|\phi_1-\phi_2|
\end{align*}
for all $x\in\Omega$. Therefore, $R^\varepsilon$ is a contraction mapping,
which implies that 
there is a unique fixed point by the Banach fixed point theorem.
\end{proof}

\begin{prop}\label{consistency-with-games}
  Assume that 
  $(\mathrm{A}\ref{assum:homogenity})$-$(\mathrm{A}\ref{assum:rank})$
  hold.
  Let $\Omega\subset \R^n$ be an open set and 
  $g$ be a bounded continuous function defined on a neighborhood of $\partial\Omega$.
  For each small $\varepsilon>0$, 
  let $u^\varepsilon\in\mathcal{B}(\Omega) $ 
  be the fixed point of $R^\varepsilon[\;\cdot\;]$ defined in 
  Proposition $\ref{prop:fixed-point}$, 
  i.e., $u^\varepsilon\in \mathcal{B}(\Omega)$ is a function uniquely determined by
  \begin{align*}
    u^\varepsilon (x)
  = \inf_{(\mathbf{v},\mathbf{w})\in \mathcal{D}}\max_{\mathbf{b}\in\mathcal{S}}
  \left\{
  \begin{aligned}
    &1-e^{-\varepsilon^2}+e^{-\varepsilon^2}
      u^\varepsilon (x+\delta^\varepsilon(\mathbf{v},\mathbf{w},\mathbf{b}))
      &&\text{if}\ x+\delta^\varepsilon(\mathbf{v},\mathbf{w},\mathbf{b}) \in \Omega,\\
      &1-e^{-\varepsilon^2}+e^{-\varepsilon^2}
      g (x+\delta^\varepsilon(\mathbf{v},\mathbf{w},\mathbf{b}))
      &&\text{if}\ x+\delta^\varepsilon(\mathbf{v},\mathbf{w},\mathbf{b})\not \in \Omega.
  \end{aligned}
  \right.
  \end{align*}
  Then, for any $x\in\Omega$ and $\alpha>0$, 
  there exist sequences 
  $\{(\mathbf{v}_j,\mathbf{w}_j,\mathbf{b}_j)\}_{j=1}^N\ (N\in\N\cup\{\infty\})$ and 
  $\{y_j\}_{j=0}^N$ such that 
  $y_0=x$, 
  $y_j = y_{j-1}+\delta^\varepsilon(\mathbf{v}_j,\mathbf{w}_j,\mathbf{b}_j)$
  and either $N<\infty$, $y_N\not\in\Omega$ and  
  \begin{align*}
    1-e^{-N\varepsilon^2}+e^{-N\varepsilon^2}g(y_N)-\alpha < u^\varepsilon (x)
    \leq 1-e^{-N\varepsilon^2}+e^{-N\varepsilon^2}g(y_N),
  \end{align*}
  or else, $N=\infty$ and $u^\varepsilon (x)=1$.
\end{prop}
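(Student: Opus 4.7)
The plan is to construct the sequence greedily, exploiting the infimum in the fixed-point equation for $u^\varepsilon$, and then to verify the bracketing bound in Case A by telescoping the dynamic programming identity, while treating Case B (non-terminating play) as a limiting consequence of the iterated inequalities combined with the boundedness of $u^\varepsilon$.

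First I would fix $x \in \Omega$ and $\alpha > 0$, and choose a summable sequence $\alpha_j > 0$ with $\sum_{j\geq 1}\alpha_j \leq \alpha$ (for instance $\alpha_j = \alpha/2^j$). I then build $\{(\mathbf{v}_j,\mathbf{w}_j,\mathbf{b}_j)\}$ and $\{y_j\}$ inductively: set $y_0 = x$, and given $y_{j-1} \in \Omega$, use $u^\varepsilon(y_{j-1}) = \inf_{(\mathbf{v},\mathbf{w})}\max_{\mathbf{b}}[\cdots]$ to pick $(\mathbf{v}_j,\mathbf{w}_j) \in \mathcal{D}$ with $\max_{\mathbf{b}\in\mathcal{S}}[\cdots] \leq u^\varepsilon(y_{j-1}) + \alpha_j$. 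Because $\mathcal{S}$ is finite, I can select $\mathbf{b}_j$ achieving this maximum, so that
\begin{equation*}
u^\varepsilon(y_{j-1}) \;\leq\; 1 - e^{-\varepsilon^2} + e^{-\varepsilon^2} Z_j \;\leq\; u^\varepsilon(y_{j-1}) + \alpha_j,
\end{equation*}
where $Z_j = u^\varepsilon(y_j)$ if $y_j := y_{j-1} + \delta^\varepsilon(\mathbf{v}_j,\mathbf{w}_j,\mathbf{b}_j)\in\Omega$, and $Z_j = g(y_j)$ otherwise. I halt at step $N$ if $y_N \notin \Omega$; otherwise $N = \infty$.

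In Case A ($N < \infty$), I chain the double inequality above: the left inequalities yield, by induction on $j$ starting from $Z_N = g(y_N)$ and using the geometric identity $\sum_{j=1}^{N}(1-e^{-\varepsilon^2})e^{-(j-1)\varepsilon^2} = 1 - e^{-N\varepsilon^2}$, the upper bound $u^\varepsilon(x) \leq 1 - e^{-N\varepsilon^2} + e^{-N\varepsilon^2}g(y_N)$. The right inequalities, chained the same way, produce $u^\varepsilon(x) \geq 1 - e^{-N\varepsilon^2} + e^{-N\varepsilon^2}g(y_N) - \sum_{j=1}^{N}e^{-(j-1)\varepsilon^2}\alpha_j$, and since the weighted sum is dominated by $\sum\alpha_j \leq \alpha$ this gives the strict lower bound in the statement.

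In Case B ($N = \infty$), every $y_j$ lies in $\Omega$, so both chained inequalities hold for every finite truncation:
\begin{equation*}
1 - e^{-N\varepsilon^2} + e^{-N\varepsilon^2} u^\varepsilon(y_N) - \alpha \;\leq\; u^\varepsilon(x) \;\leq\; 1 - e^{-N\varepsilon^2} + e^{-N\varepsilon^2} u^\varepsilon(y_N).
\end{equation*}
Letting $N \to \infty$ and using that $u^\varepsilon \in \mathcal{B}(\Omega)$ is bounded, so that $e^{-N\varepsilon^2}u^\varepsilon(y_N) \to 0$, I obtain $u^\varepsilon(x) \leq 1$. The dichotomy in the statement then follows by the contrapositive argument: if $u^\varepsilon(x) < 1$, the inductively chosen $\alpha_j$ can be taken small enough (say $\sum\alpha_j < 1 - u^\varepsilon(x)$) that a non-terminating run would force $u^\varepsilon(x) \geq 1 - \alpha > u^\varepsilon(x)$, a contradiction; so the run must terminate and Case A applies. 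If $u^\varepsilon(x) = 1$, either case is consistent with the statement.

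The step I expect to be most delicate is justifying the $u^\varepsilon(x) = 1$ conclusion in Case B cleanly, since this requires both the upper bound $u^\varepsilon \leq 1$ (which rests on monotonicity of $R^\varepsilon$ together with $g \leq 1$ inherited from the intended use $g = \psi\circ G$) and the lower bound obtained by passing to the limit in the $\alpha$-approximate iteration; ensuring the argument is internally consistent and that the selection procedure in the infimum is genuinely attainable requires a little care, but it reduces to an elementary $\varepsilon$-choice.
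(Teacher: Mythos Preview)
Your argument is correct and essentially identical to the paper's: greedy near-optimal choices of $(\mathbf{v}_j,\mathbf{w}_j)$ with errors $\alpha/2^j$, selection of $\mathbf{b}_j$ attaining the finite maximum, telescoping the resulting two-sided inequality, and passing $j\to\infty$ when $N=\infty$ (the paper phrases your contrapositive as ``letting $\alpha\to 0$''). Your final worry is unfounded: the bound $u^\varepsilon(x)\leq 1$ in Case~B follows directly from your own chained upper inequality $u^\varepsilon(x)\leq 1-e^{-j\varepsilon^2}+e^{-j\varepsilon^2}u^\varepsilon(y_j)$ together with the boundedness of $u^\varepsilon\in\mathcal{B}(\Omega)$, with no need for $g\leq 1$ or monotonicity of $R^\varepsilon$.
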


\begin{proof}
For the notational simplicity, define $\widehat{u}^\varepsilon$ on an open neighborhood of 
$\overline{\Omega}$ by 
\begin{align*}
  \widehat{u}^\varepsilon (x)
  =\left\{
  \begin{aligned}
    & u^\varepsilon (x) &&\text{if } x\in\Omega,\\
    & g(x) && \text{if } x\not\in \Omega.
  \end{aligned}
  \right.
\end{align*}
Under this notation, the dynamic programming principle for $u^\varepsilon$
can be written as
\begin{align*}
  u^\varepsilon(x)=
  \inf_{(\mathbf{v},\mathbf{w})\in \mathcal{D}}\max_{\mathbf{b}\in\mathcal{S}}
  \left\{ 1-e^{-\varepsilon^2}+
  e^{-\varepsilon^2}\widehat{u}^\varepsilon (x+\delta^\varepsilon(\mathbf{v},\mathbf{w},\mathbf{b}))
  \right\}.
\end{align*}

Fix $x\in\Omega$ and $\alpha >0$.
As the first step, choose 
$(\mathbf{v}_1,\mathbf{w}_1)\in\mathcal{D}$ such that 
\begin{align*}
  u^\varepsilon (x)+\frac{\alpha}{2}
  > \max_{\mathbf{b}\in\mathcal{S}} \left\{ 1-e^{-\varepsilon^2}+
  e^{-\varepsilon^2}
  \widehat{u}^\varepsilon(x+\delta^\varepsilon (\mathbf{v}_1,\mathbf{w}_1,\mathbf{b}))
  \right\},
\end{align*}
and then, choose $\mathbf{b}_1\in\mathcal{S}$ such that 
\begin{align*}
  \widehat{u}^\varepsilon(x+\delta^\varepsilon(\mathbf{v}_1,\mathbf{w}_1,\mathbf{b}_1))
  =\max_{\mathbf{b}\in\mathcal{S}}\widehat{u}^\varepsilon(x+\delta^\varepsilon (\mathbf{v}_1,\mathbf{w}_1,\mathbf{b})).
\end{align*}
Set $y_1:=x+\delta^\varepsilon (\mathbf{v}_1,\mathbf{w}_1,\mathbf{b}_1)$.
At this time, if $y_1\not \in \Omega$, set $N=1$. 
If $y_1\in \Omega$, go to the second step: 
choose $(\mathbf{v}_2,\mathbf{w}_2)\in\mathcal{D}$ such that 
\begin{align*}
  u^\varepsilon (y_1)+\frac{\alpha}{4}
  > \max_{\mathbf{b}\in\mathcal{S}} \left\{ 1-e^{-\varepsilon^2}+
  e^{-\varepsilon^2}\widehat{u}^\varepsilon(y_1+\delta^\varepsilon (\mathbf{v}_2,\mathbf{w}_2,\mathbf{b}))
  \right\}
\end{align*}
and $\mathbf{b}_2\in\mathcal{S}$ such that 
\begin{align*}
  \widehat{u}^\varepsilon(y_1+\delta^\varepsilon(\mathbf{v}_2,\mathbf{w}_2,\mathbf{b}_2))
  =\max_{\mathbf{b}\in\mathcal{S}}
  \widehat{u}^\varepsilon(y_1+\delta^\varepsilon (\mathbf{v}_2,\mathbf{w}_2,\mathbf{b})).
\end{align*}
Set
$y_2=y_1 + \delta^\varepsilon (\mathbf{v}_2,\mathbf{w}_2,\mathbf{b}_2)$.
Similarly, if $y_2\not\in\Omega$ at this time, set $N=2$. 
And if $y_2\in\Omega$, go to the next step.
For each $j=1,2,\ldots$, we iteratively choose $(\mathbf{v}_j,\mathbf{w}_j)$ with 
\begin{align*}
  u^\varepsilon (y_{j-1})+\frac{\alpha}{2^{j}}
  > \max_{\mathbf{b}\in\mathcal{S}}\left\{ 1-e^{-\varepsilon^2}+
  e^{-\varepsilon^2}\widehat{u}^\varepsilon (y_{j-1}+\delta^\varepsilon (\mathbf{v}_j,\mathbf{w}_j,\mathbf{b}))
  \right\}
\end{align*}
and pick $\mathbf{b}_j$ with 
\begin{align*}
  \widehat{u}^\varepsilon(y_{j-1}+\delta^\varepsilon(\mathbf{v}_j,\mathbf{w}_j,\mathbf{b}_j))
  =\max_{\mathbf{b}\in\mathcal{S}}\widehat{u}^\varepsilon(y_{j-1}+\delta^\varepsilon (\mathbf{v}_j,\mathbf{w}_j,\mathbf{b}))
\end{align*}
until $y_j:=y_{j-1}+\delta^\varepsilon (\mathbf{v}_j,\mathbf{w}_j,\mathbf{b}_j)\not\in\Omega$ occurs. 
We let $N=j$ when $y_j\not\in \Omega$ for the first time.
Then, by the choices of $(\mathbf{v}_j,\mathbf{w}_j,\mathbf{b}_j)$, 
it follows that 
\begin{align}\label{eq:consistency-of-games-pf1}
  u^\varepsilon (x)+\alpha
  &=u^\varepsilon (x)+\alpha\sum_{j=0}^\infty \frac{1}{2^{j+1}}\\
  &>u^\varepsilon (x)+\sum_{j=0}^\infty \frac{\alpha e^{-j\varepsilon^2}}{2^{j+1}}
  =u^\varepsilon (x)+\frac{\alpha}{2} + \frac{\alpha e^{-\varepsilon^2}}{4}
  +\sum_{j=2}^\infty \frac{\alpha e^{-j\varepsilon^2}}{2^{j+1}}\notag\\
  &>1-e^{-\varepsilon^2} + e^{-\varepsilon^2} \left(
    u^\varepsilon (y_1) + \frac{\alpha}{4}
  \right)+\frac{\alpha e^{-2\varepsilon^2}}{8}+
  \sum_{j=3}^\infty \frac{\alpha e^{-j\varepsilon^2}}{2^{j+1}}\notag\\
  &>1-e^{-2\varepsilon^2} + e^{-2\varepsilon^2}\left(
    u^\varepsilon (y_2)+\frac{\alpha}{8} 
  \right)+\frac{\alpha e^{-3\varepsilon^2}}{16}
  +\sum_{j=4}^\infty \frac{\alpha e^{-j\varepsilon^2}}{2^{j+1}}\notag\\
  &>\cdots 
  >1-e^{-j\varepsilon^2} + e^{-j\varepsilon^2}\left(
    u^\varepsilon (y_j)+\frac{\alpha}{2^{j+1}}
  \right)+\frac{\alpha e^{-(j+1)\varepsilon^2}}{2^{j+2}}
  +\sum_{k=j+2}^\infty \frac{\alpha e^{-k\varepsilon^2}}{2^{k+1}}\notag
\end{align}
and 
\begin{align}\label{eq:consistency-of-games-pf2}
  u^\varepsilon (x) &\leq 1-e^{-\varepsilon^2} + e^{-\varepsilon^2}u^\varepsilon (y_1)
  \leq 1-e^{-2\varepsilon^2} + e^{-2\varepsilon^2}u^\varepsilon (y_2)\\
  &\leq \cdots \leq 1-e^{-j\varepsilon^2} + e^{-j\varepsilon^2}u^\varepsilon (y_j)\notag
\end{align}
for all $1\leq j<N$.
When $x_j\in\Omega$ holds infinitely, 
we have $1-\alpha \leq u^\varepsilon (x)\leq 1$ by taking limit of $j\to\infty$ 
in (\ref{eq:consistency-of-games-pf1}) and (\ref{eq:consistency-of-games-pf2}).
Then, we obtain $u^\varepsilon (x)=1$ by letting $\alpha \to 0$.
Otherwise, by calculating (\ref{eq:consistency-of-games-pf1}) and (\ref{eq:consistency-of-games-pf2}) 
up to the $N$-th step, we have
\begin{align*}
  1-e^{-N\varepsilon^2} + e^{-N\varepsilon^2}g(y_N)-\alpha
  < u^\varepsilon(x)
  \leq 1-e^{-N\varepsilon^2} + e^{-N\varepsilon^2}g(y_N).
\end{align*}
\end{proof}

\subsection{Convergence of the value functions}

\begin{prop}
Assume that 
$(\mathrm{A}\ref{assum:homogenity})$-$(\mathrm{A}\ref{assum:rank})$
hold.
Let $\Omega\subset \R^n$ be an open set 
and $g_1,g_2$ be bounded and uniformly contiuous functions defined on a neighborhood 
of $\partial \Omega$. 
Let $u_1^\varepsilon, u_2^\varepsilon$ be the functions determined by 
\begin{equation*}
  u_i^\varepsilon (x)
  = \inf_{(\mathbf{v},\mathbf{w})\in \mathcal{D}}\max_{\mathbf{b}\in\mathcal{S}}
  \left\{
  \begin{aligned}
    &1-e^{-\varepsilon^2}+e^{-\varepsilon^2}
      u^\varepsilon_i (x+\delta^\varepsilon(\mathbf{v},\mathbf{w},\mathbf{b}))
      &&\text{if}\ x+\delta^\varepsilon(\mathbf{v},\mathbf{w},\mathbf{b}) \in \Omega,\\
      &1-e^{-\varepsilon^2}+e^{-\varepsilon^2}
      g_i (x+\delta^\varepsilon(\mathbf{v},\mathbf{w},\mathbf{b}))
      &&\text{if}\ x+\delta^\varepsilon(\mathbf{v},\mathbf{w},\mathbf{b})\not \in \Omega
  \end{aligned}
  \right.
\end{equation*}
in the same way as in Propositon $\ref{prop:fixed-point}$ and let 
\begin{equation*}
  \overline{u}_i(x)
  = \limsup_{\substack{\Omega \ni y\to x \\ \varepsilon \to 0}} u_i^\varepsilon (y),
  \quad 
  \underline{u}_i(x)
  = \liminf_{\substack{\Omega \ni y\to x \\ \varepsilon \to 0}} u_i^\varepsilon (y),
  \quad x\in\overline{\Omega}
\end{equation*}
for $i=1,2$. 
Then, $g_1|_{\partial\Omega}=g_2|_{\partial\Omega}$ implies 
$\overline{u}_1=\overline{u}_2$ and $\underline{u}_1=\underline{u}_2$.
\end{prop}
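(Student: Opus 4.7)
The plan is to show that $\sup_\Omega |u_1^\varepsilon - u_2^\varepsilon| \to 0$ as $\varepsilon \to 0$; this at once implies both $\overline{u}_1 = \overline{u}_2$ and $\underline{u}_1 = \underline{u}_2$, because $u_1^\varepsilon$ and $u_2^\varepsilon$ then have identical upper and lower limits along every sequence $\Omega \ni y \to x$, $\varepsilon \to 0$. The key geometric input is that any one-step exit point, namely a point $x + \delta^\varepsilon(\mathbf{v},\mathbf{w},\mathbf{b})$ with $x \in \Omega$ landing outside $\Omega$, lies within $O(\varepsilon)$ of $\partial\Omega$; hence the boundary data evaluated there differs by only the common modulus of continuity of $g_1$ and $g_2$.

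First I would establish the step-size bound $|\delta^\varepsilon(\mathbf{v},\mathbf{w},\mathbf{b})| \leq C\varepsilon$ uniformly over $\mathcal{D} \times \mathcal{S}$, which follows because $\sigma$ and $c$ are bounded on $\partial B(0,1)$ by $(\mathrm{A}\ref{assum:Lipschitz-conti})$. Given $x \in \Omega$ and $y := x + \delta^\varepsilon(\mathbf{v},\mathbf{w},\mathbf{b}) \notin \Omega$, openness of $\Omega$ forces the segment from $x$ to $y$ to cross $\partial\Omega$ at a point $y^*$ with $|y - y^*| \leq C\varepsilon$. Letting $\omega$ be a common modulus of uniform continuity of $g_1$ and $g_2$ and using $g_1(y^*) = g_2(y^*)$, I obtain $|g_1(y) - g_2(y)| \leq 2\omega(C\varepsilon)$ for every $\varepsilon$ small enough that such $y$ lie in the common neighborhood of $\partial\Omega$ where both $g_i$ are defined.

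Next I would compare the two fixed-point equations directly. Extend each $u_i^\varepsilon$ by $g_i$ outside $\Omega$ and call the extension $\widehat{u}_i^\varepsilon$. Setting $M_\varepsilon := \sup_\Omega |u_1^\varepsilon - u_2^\varepsilon|$ and $A_\varepsilon := 2\omega(C\varepsilon)$, the previous step combined with the definition of $M_\varepsilon$ gives
\[
|\widehat{u}_1^\varepsilon(x + \delta^\varepsilon(\mathbf{v},\mathbf{w},\mathbf{b})) - \widehat{u}_2^\varepsilon(x + \delta^\varepsilon(\mathbf{v},\mathbf{w},\mathbf{b}))| \leq \max(M_\varepsilon, A_\varepsilon)
\]
for every $x \in \Omega$ and every $(\mathbf{v},\mathbf{w},\mathbf{b}) \in \mathcal{D} \times \mathcal{S}$. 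Feeding this into the dynamic programming principle characterizing $u_i^\varepsilon$ and using the elementary inequality $|\inf_a \sup_b f - \inf_a \sup_b g| \leq \sup |f - g|$ yields $M_\varepsilon \leq e^{-\varepsilon^2} \max(M_\varepsilon, A_\varepsilon)$. Since $e^{-\varepsilon^2} < 1$, this forces $M_\varepsilon \leq e^{-\varepsilon^2} A_\varepsilon \to 0$.

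The main obstacle I anticipate is the exit-point estimate, specifically verifying rigorously that each relevant one-step exit point lies in the common domain of definition of $g_1, g_2$ for all sufficiently small $\varepsilon$, without assuming further regularity of $\partial\Omega$. Once this is settled, the remainder of the argument is essentially the contraction principle already exploited in Proposition \ref{prop:fixed-point}: the contraction constant $e^{-\varepsilon^2}$ upgrades the one-step boundary-data discrepancy $A_\varepsilon$ to a uniform interior discrepancy over all of $\Omega$, which tends to $0$ by uniform continuity of the $g_i$.
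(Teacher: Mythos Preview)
Your argument is correct and in fact cleaner than the paper's. The paper proceeds game-theoretically: for each $x$, $\varepsilon$, $\alpha$ it builds a trajectory $\{y_j\}$ in which Player~I plays $\alpha/2^j$-optimally for $u_1^\varepsilon$ while Player~II plays optimally for $u_2^\varepsilon$, obtaining simultaneously $u_1^\varepsilon(x) > 1 - e^{-N\varepsilon^2} + e^{-N\varepsilon^2} g_1(y_N) - \alpha$ and $u_2^\varepsilon(x) \leq 1 - e^{-N\varepsilon^2} + e^{-N\varepsilon^2} g_2(y_N)$ (with the obvious modification when $N=\infty$). Since the exit point $y_N$ lies within $C\varepsilon$ of $\partial\Omega$, the term $|g_1(y_N)-g_2(y_N)|$ is bounded by $\sup_{B(\partial\Omega,C\varepsilon)}|g_1-g_2|\to 0$, and passing to the limit along a sequence realizing $\overline{u}_2(x)$ yields $\overline{u}_2(x)\leq\overline{u}_1(x)$; the remaining inequalities follow by symmetry.

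Your route bypasses the trajectory construction entirely: you feed the one-step boundary discrepancy $A_\varepsilon = 2\omega(C\varepsilon)$ directly into the contraction inequality for the operator $R^\varepsilon$ of Proposition~\ref{prop:fixed-point}, obtaining the \emph{uniform} bound $\sup_\Omega|u_1^\varepsilon-u_2^\varepsilon|\leq e^{-\varepsilon^2}A_\varepsilon$. This is stronger than what the paper proves (equality of relaxed limits follows immediately) and uses nothing beyond the fixed-point structure already in place. The obstacle you flag is not an issue: since every one-step exit point lies within $|\delta^\varepsilon|\leq C\varepsilon$ of $\partial\Omega$, for $\varepsilon$ small it lies in the given neighborhood of $\partial\Omega$, uniformly in $x$.
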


\begin{proof}
First, we show that 
for all $x\in\Omega$, $\varepsilon>0$ and $\alpha>0$,
there is a sequence 
$\{(\mathbf{v}_j,\mathbf{w}_j,\mathbf{b}_j)\}_{j=1}^N$ $(N\in\N\cup\{\infty\})$ 
satisfying
\begin{align}\label{eq:boundary-consistency-pf2}
  \left\{
  \begin{aligned}
  &u_1^\varepsilon (x)>1-e^{-N\varepsilon^2}+e^{-N\varepsilon^2}g_1(y_N)-\alpha,
  \quad u_2^\varepsilon (x) \leq 1-e^{-N\varepsilon^2}+e^{-N\varepsilon^2}g_2(y_N)
  &&\text{if }N<\infty,\\
  &u_1^\varepsilon (x)>1-\alpha,\quad u_2^\varepsilon (x)\leq 1
  &&\text{if } N=\infty,
  \end{aligned}
  \right.
\end{align}
where $y_j=y_{j-1}+\delta^\varepsilon (\mathbf{v}_j,\mathbf{w}_j,\mathbf{b}_j)$
and $y_0=x$.
We can construct such a sequence using the same procedure as in Proposition \ref{consistency-with-games}. 
Define $\widehat{u}_i^\varepsilon$ on an open neighborhood of 
$\overline{\Omega}$ by 
\begin{align*}
  \widehat{u}_i^\varepsilon (x)
  =\left\{
  \begin{aligned}
    & u_i^\varepsilon (x) &&\text{if } x\in\Omega,\\
    & g_i(x) && \text{if } x\not\in \Omega
  \end{aligned}
  \right.
\end{align*}
for $i=1,2$.
Fix $x\in\Omega,\ \varepsilon>0$ and $\alpha>0$
and choose $(\mathbf{v}_1,\mathbf{w}_1)\in\mathcal{D}$ such that 
\begin{align*}
  u_1(x)+\frac{\alpha}{2}
  > \max_\mathbf{b\in\mathcal{S}}
  \left\{
    1-e^{-\varepsilon^2}+e^{-\varepsilon^2}\widehat{u}_1
    (x+\delta^\varepsilon (\mathbf{v}_1,\mathbf{w}_1,\mathbf{b}))
  \right\}.
\end{align*}
Then, choose $\mathbf{b}_1\in\mathcal{S}$ such that 
\begin{align*}
  \widehat{u}_2^\varepsilon (x+\delta^\varepsilon (\mathbf{v}_1,\mathbf{w}_1,\mathbf{b}_1))
  =\max_{\mathbf{b}\in\mathcal{S}}
  \widehat{u}_2^\varepsilon (x+\delta^\varepsilon (\mathbf{v}_1,\mathbf{w}_1,\mathbf{b}))
\end{align*}
and set $y_1 = x +\delta^\varepsilon (x+\delta^\varepsilon (\mathbf{v}_1,\mathbf{w}_1,\mathbf{b}_1))$.
Iteratively for each $j=2,3,\ldots$, 
as long as $y_{j-1}\in \Omega$ holds,
choose $(\mathbf{v}_j,\mathbf{w}_j)$ with 
\begin{align*}
  u_1^\varepsilon (y_{j-1})+\frac{\alpha}{2^{j}}
  > \max_{\mathbf{b}\in\mathcal{S}}\left\{ 1-e^{-\varepsilon^2}+
  e^{-\varepsilon^2}\widehat{u}_1^\varepsilon (y_{j-1}+\delta^\varepsilon (\mathbf{v}_j,\mathbf{w}_j,\mathbf{b}))
  \right\}
\end{align*}
and pick $\mathbf{b}_j$ with 
\begin{align*}
  \widehat{u}_2^\varepsilon(y_{j-1}+\delta^\varepsilon(\mathbf{v}_j,\mathbf{w}_j,\mathbf{b}_j))
  =\max_{\mathbf{b}\in\mathcal{S}}\widehat{u}_2^\varepsilon(y_{j-1}+\delta^\varepsilon (\mathbf{v}_j,\mathbf{w}_j,\mathbf{b})).
\end{align*}
When $y_j:=y_{j-1}+\delta^\varepsilon (\mathbf{v}_j,\mathbf{w}_j,\mathbf{b}_j)
\not\in\Omega$ occurs for the first time for some $j$, 
we let $N=j$.
Then, by the same calculation as (\ref{eq:consistency-of-games-pf1}) and (\ref{eq:consistency-of-games-pf2}),
we have (\ref{eq:boundary-consistency-pf2}).

Take $x\in\overline{\Omega}$ arbitrarily.
Then, there exist sequences $\{x_k\}\subset \Omega$ and $\{\varepsilon_k\}$
such that $ x_k\to x$, $\varepsilon \to 0$, 
$u_2^{\varepsilon_k}(x_k)\to\overline{u}_2(x)$.
We take a sequence $\{(\mathbf{v}_{k,j},\mathbf{w}_{k,j},
\mathbf{b}_{k,j})\}_{j=1}^{N_k}$ for each $k$ such that 
\begin{align}\label{eq:boundary-consistency-pf1}
  \left\{
  \begin{aligned}
  &u_1^{\varepsilon_k} (x_k)>1-e^{-N_k\varepsilon_k^2}
  +e^{-N_k\varepsilon_k^2}g_1(y_{k,N_k})-\frac{1}{k},\\
  &u_2^{\varepsilon_k} (x_k) 
  \leq 1-e^{-N_k\varepsilon_k^2}+e^{-N_k\varepsilon_k^2}g_2(y_{k,N_k})
  \end{aligned}
  \right.
\end{align}
where $y_{k,0}=x_k$, $y_{k,j}=y_{k,j-1}+\delta^{\varepsilon_k}(\mathbf{v}_{k,j},\mathbf{w}_{k,j},
\mathbf{b}_{k,j})$.
Using (\ref{eq:boundary-consistency-pf1}) and $|\delta^\varepsilon|=O(\varepsilon)$,
we have
\begin{align*}
  u_2^{\varepsilon_k}(x_k)
  &\leq 1-e^{-N_k \varepsilon_k^2} + e^{-N_k \varepsilon_k^2}g_2(y_{k,N_k})\\
  &\leq 1-e^{-N_k \varepsilon_k^2} + e^{-N_k \varepsilon_k^2}g_1(y_{k,N_k})
  +e^{-N_k \varepsilon_k^2}|g_1(y_{k,N_k})-g_2(y_{k,N_k})|\\
  &\leq u_1^{\varepsilon_k}(x_k)+\frac{1}{k}
  +e^{-N_k\varepsilon_k^2} \sup_{B(\partial\Omega,C\varepsilon_k)}|g_1-g_2|.
\end{align*}
By the definition of $\overline{u}_i$ and uniform continuity of $g_i$, it follows that 
\begin{align*}
  \overline{u}_2(x)
  = \lim_{k\to\infty} u^{\varepsilon_k}(x_k)
  \leq \limsup_{k\to\infty}\left\{
  u_1^{\varepsilon_k}(x_k)+\frac{1}{k}
  +e^{-N_k\varepsilon_k^2} \sup_{B(\partial\Omega,C\varepsilon_k)}|g_1-g_2|
  \right\}
  \leq \overline{u}_1(x)
\end{align*}
if $g_1|_{\partial \Omega}=g_2|_{\partial\Omega}$.
By similar arguments and symmetry with $i=1,2$,
we can get $\underline{u}_2(x)\leq \underline{u}_1(x)$, 
$\overline{u}_1(x)\leq \overline{u}_2(x)$ and $\underline{u}_1(x)\leq \underline{u}_2(x)$.
\end{proof}

\begin{thm}\label{thm:game-conv}
Assume $(\mathrm{A}\ref{assum:homogenity})$-$(\mathrm{A}\ref{assum:rank})$, 
$\Omega\subset \R^n$ be an open set 
and $g\in C(\partial\Omega)$ be bounded and uniformly continuous. 
Let $u^\varepsilon$ be the function determined by
\begin{equation*}
  u^\varepsilon (x)
  = \inf_{(\mathbf{v},\mathbf{w})\in \mathcal{D}}\max_{\mathbf{b}\in\mathcal{S}}
  \left\{
  \begin{aligned}
    &1-e^{-\varepsilon^2}+e^{-\varepsilon^2}
      u^\varepsilon (x+\delta^\varepsilon(\mathbf{v},\mathbf{w},\mathbf{b}))
      &&\text{if}\ x+\delta^\varepsilon(\mathbf{v},\mathbf{w},\mathbf{b}) \in \Omega,\\
      &1-e^{-\varepsilon^2}+e^{-\varepsilon^2}
      g (x+\delta^\varepsilon(\mathbf{v},\mathbf{w},\mathbf{b}))
      &&\text{if}\ x+\delta^\varepsilon(\mathbf{v},\mathbf{w},\mathbf{b})\not \in \Omega 
  \end{aligned}
  \right.
\end{equation*}
for a continuous extention of $g$ to a neighborhood of $\partial\Omega$. 
And let
\begin{equation*}
  \overline{u}(x)
  = \limsup_{\substack{\Omega \ni y\to x \\ \varepsilon \to 0}} u^\varepsilon (y),
  \quad 
  \underline{u}(x)
  = \liminf_{\substack{\Omega \ni y\to x \\ \varepsilon \to 0}} u^\varepsilon (y),
  \quad x\in\overline{\Omega}.
\end{equation*}
Then, the function $\overline{u}$ (resp., $\underline{u}$) is 
upper semicontinuous (resp., lower semicontinuous) on $\overline{\Omega}$ 
and a viscosity subsolution (resp., supersolution) of
\begin{align*}
  \left\{
  \begin{aligned}
    u+F(Du,D^2u)&=1&&\text{in }\Omega,\\
    u&=g&&\text{on }\partial\Omega.
  \end{aligned}
  \right.
\end{align*}
\end{thm}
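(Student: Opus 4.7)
The plan is the standard half-relaxed limits argument: plug a $C^2$ test function into the dynamic programming principle, make a clever Player I strategy choice, Taylor-expand, and pass to the limit. I treat the subsolution property for $\overline{u}$; the supersolution property for $\underline{u}$ follows by interchanging the two players' roles. Uniform boundedness of $\{u^\varepsilon\}$ (from Proposition \ref{prop:fixed-point} and boundedness of $g$) makes $\overline{u}$ and $\underline{u}$ finite on $\overline{\Omega}$, and upper (resp.\ lower) semicontinuity is immediate from the definitions of the half-relaxed limits.

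Let $\phi\in C^2$ be such that $\overline{u}-\phi$ has a strict local maximum at $x_0\in\Omega$. A standard compactness argument produces sequences $\varepsilon_k\to 0$ and $x_k\to x_0$ with $u^{\varepsilon_k}(x_k)\to\overline{u}(x_0)$ and
\begin{equation*}
  u^{\varepsilon_k}(x_k+\delta)-\phi(x_k+\delta)\leq u^{\varepsilon_k}(x_k)-\phi(x_k)+o(1)\quad\text{for } |\delta|\leq C\varepsilon_k.
\end{equation*}
Combining this with (\ref{DPP-u-2}) gives, for every $(\mathbf{v},\mathbf{w})\in\mathcal{D}$,
\begin{equation*}
  (1-e^{-\varepsilon_k^2})u^{\varepsilon_k}(x_k)\leq 1-e^{-\varepsilon_k^2} + e^{-\varepsilon_k^2}\sup_{\mathbf{b}\in\mathcal{S}}\bigl[\phi(x_k+\delta^{\varepsilon_k}(\mathbf{v},\mathbf{w},\mathbf{b}))-\phi(x_k)\bigr]+o(\varepsilon_k^2).
\end{equation*}

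The heart of the argument is Player I's choice. Assuming $p:=D\phi(x_k)\neq 0$, take $\mathbf{v}^*=(p/|p|,\,-p/|p|)$ and let $\mathbf{w}^*$ be an orthonormal basis of $\R^m$ consisting of eigenvectors of the symmetric matrix ${}^t\!\sigma(p/|p|)\,D^2\phi(x_k)\,\sigma(p/|p|)$. By (A\ref{assum:rank}), $\sigma(v^{*1})w^i\perp p$, so the first-order term $\varepsilon_k\sqrt{2}\,p\cdot\sum_i b^i\sigma(v^{*1})w^i$ vanishes identically in $\mathbf{b}$; the $v^{*2}$-contribution yields $-\varepsilon_k^2 c(p)$ via (A\ref{assum:homogenity}); in the quadratic Taylor term the off-diagonal products $b^i b^j$ ($i\neq j$) are annihilated by the eigenbasis choice, and the diagonal entries (independent of $\mathbf{b}$ since $(b^i)^2=1$) sum to $\varepsilon_k^2\tr\bigl(\sigma(p)\,{}^t\!\sigma(p)\,D^2\phi(x_k)\bigr)$. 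Hence
\begin{equation*}
  \sup_{\mathbf{b}}\bigl[\phi(x_k+\delta^{\varepsilon_k}(\mathbf{v}^*,\mathbf{w}^*,\mathbf{b}))-\phi(x_k)\bigr]=-\varepsilon_k^2\,F(p,D^2\phi(x_k))+o(\varepsilon_k^2),
\end{equation*}
and dividing the previous inequality by $1-e^{-\varepsilon_k^2}\sim\varepsilon_k^2$ and sending $k\to\infty$ gives $\overline{u}(x_0)+F(D\phi(x_0),D^2\phi(x_0))\leq 1$. The singular case $D\phi(x_0)=0$, where the singular viscosity framework restricts to test functions with $D^2\phi(x_0)=0$ as well, is handled via $F_*(0,0)=F^*(0,0)=0$ and the bound $\overline{u}\leq 1$.

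For the boundary inequality $\overline{u}\leq g$ on $\partial\Omega$, at $x_0\in\partial\Omega$ and $x$ near $x_0$ Player I can pick $(\mathbf{v}^*,\mathbf{w}^*)$ so that $x+\delta^\varepsilon(\mathbf{v}^*,\mathbf{w}^*,\mathbf{b})\notin\Omega$ for every $\mathbf{b}$, using $|\delta^\varepsilon|=O(\varepsilon)$ and the freedom in $v^{*2}$ to aim outward; the DPP then yields $u^\varepsilon(x)\leq 1-e^{-\varepsilon^2}+e^{-\varepsilon^2}\max_\mathbf{b} g(x+\delta^\varepsilon)$, and uniform continuity of $g$ combined with $|\delta^\varepsilon|\to 0$ gives $\overline{u}(x_0)\leq g(x_0)$. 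I expect two delicate points. First, the extraction of the near-maximizers $x_k$ must be handled carefully because $u^\varepsilon$ is only a bounded function, not continuous; the usual fix is to perturb by $\eta|y-x_0|^2$ and send $\eta\to 0$ together with $\varepsilon\to 0$. Second, and more substantially, the simultaneous use of an orthonormal eigenbasis by Player I and of signs $b^i\in\{\pm 1\}$ by Player II must be orchestrated so that the $b^i b^j$ off-diagonals cancel while the diagonal sum produces exactly $\tr(\sigma\,{}^t\!\sigma\,D^2\phi)$; it is precisely this interplay that encodes the elliptic operator $-\tr(\sigma\,{}^t\!\sigma\,D^2u)$ in the game dynamics.
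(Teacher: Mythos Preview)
Your interior subsolution argument with $D\phi(x_0)\neq 0$ is correct and matches the paper's Lemma~\ref{lem:lem-game-conv}(1). However, there are two genuine gaps.

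\textbf{Boundary condition.} Your claim that Player~I can choose $(\mathbf{v}^*,\mathbf{w}^*)$ so that $x+\delta^\varepsilon(\mathbf{v}^*,\mathbf{w}^*,\mathbf{b})\notin\Omega$ for \emph{every} $\mathbf{b}\in\mathcal{S}$ is false. The displacement is
\[
\delta^\varepsilon(\mathbf{v},\mathbf{w},\mathbf{b})=\varepsilon\sqrt{2}\sum_i b^i\sigma(v^1)w^i+\varepsilon^2 c(v^1)v^2,
\]
whose dominant $O(\varepsilon)$ term is entirely controlled by Player~II's signs $\mathbf{b}$; your ``freedom in $v^{*2}$'' affects only the $O(\varepsilon^2)$ drift. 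Player~II can always flip the $O(\varepsilon)$ part to keep the marker inside $\Omega$. More fundamentally, the boundary condition here is the \emph{generalized} (viscosity) Dirichlet condition, not the classical one: what must be shown is that if $\overline{u}(x_0)>g(x_0)$ for some $x_0\in\partial\Omega$, then the PDE inequality $\overline{u}(x_0)+F_*(D\phi(x_0),D^2\phi(x_0))\leq 1$ still holds for any test function $\phi$ touching $\overline{u}$ from above at $x_0$. The paper does this by observing that, near such a point, the branch of the DPP that uses $g$ is strictly below $\overline{u}(x_0)$, so the $\inf\max$ is effectively taken only over moves landing in $\Omega$, and the interior argument carries through.

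\textbf{Supersolution direction.} This is \emph{not} obtained by ``interchanging the two players' roles.'' For $\underline{u}$ you need a lower bound on $\min_{(\mathbf{v},\mathbf{w})}\max_{\mathbf{b}}\phi(x+\delta^\varepsilon)$, i.e.\ you must control the worst Player~I strategy, not just exhibit a good one. The paper's Lemma~\ref{lem:lem-game-conv}(2),(3) handles this: Player~II first picks signs $b^i$ sequentially to make the $O(\varepsilon)$ term $\sqrt{2}\varepsilon\max_i|\langle D\phi,\sigma(v^1)w^i\rangle|$ nonnegative, then a case split on whether this quantity is $\gtrsim\varepsilon^2$ or $\lesssim\varepsilon^2$ forces (via (A\ref{assum:Lipschitz-conti})--(A\ref{assum:rank})) $v^1$ to be within $O(\varepsilon)$ of $\pm D\phi/|D\phi|$, yielding the required $-\varepsilon^2 F(D\phi,D^2\phi)+(1+|D\phi|^{-1})o(\varepsilon^2)$ lower bound. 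This asymmetry is the substantive half of the proof, and your proposal omits it entirely. Your treatment of the singular case $D\phi(x_0)=0$ via ``restrict to $D^2\phi(x_0)=0$'' is also not what the paper does (it works directly with $F_*$, $F^*$ and arbitrary $D^2\phi(x_0)$), though the two definitions are known to be equivalent.
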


We need the following lemma in our proof of Theorem \ref{thm:game-conv}
and its proof is similar to \cite[Lemma 2.3]{MR2200259}.

\begin{lem}\label{lem:lem-game-conv}
In the same assumption of Theorem $\ref{thm:game-conv}$, 
let $\phi$ be a smooth function defined on $\Omega$. 
\begin{enumerate}[$(1)$]
  \item For every $x\in\Omega$ with $D\phi(x)\neq 0$,
  \begin{equation*}
    \min_{(\mathbf{v},\mathbf{w})\in \mathcal{D}}\max_{\mathbf{b}\in\mathcal{S}}\phi\left(
      x+\delta^\varepsilon (\mathbf{v},\mathbf{w},\mathbf{b})
    \right)
    \leq\phi(x) - \varepsilon^2 F(D\phi(x),D^2\phi(x)) + o(\varepsilon^2).
  \end{equation*}
  \item For every $x\in\Omega$ and $(\mathbf{v},\mathbf{w})\in\mathcal{D}$,
  \begin{equation*}
    \max_{\mathbf{b}\in\mathcal{S}}\phi\left(
      x+\delta^\varepsilon (\mathbf{v},\mathbf{w},\mathbf{b})
    \right)
    \geq \phi(x)+\varepsilon^2 \left\{
      \tr[\sigma(v^1){}^t\!\sigma(v^1)D^2\phi(x)]-c(v^1)|D\phi(x)|
    \right\}+o(\varepsilon^2).
  \end{equation*}
  \item For every $x\in\Omega$ with $D\phi(x)\neq 0$, 
  \begin{equation*}
    \min_{(\mathbf{v},\mathbf{w})\in \mathcal{D}}\max_{\mathbf{b}\in\mathcal{S}}\phi\left(
      x+\delta^\varepsilon (\mathbf{v},\mathbf{w},\mathbf{b})
    \right)
    \geq\phi(x) - \varepsilon^2 F(D\phi(x),D^2\phi(x)) +\left(\frac{1}{|D\phi(x)|}+1\right)o(\varepsilon^2).
  \end{equation*}
\end{enumerate}
\end{lem}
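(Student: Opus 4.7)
The starting point is a second-order Taylor expansion of $\phi$ at $x$ along the step $\delta^\varepsilon = \varepsilon\sqrt{2}\sum_i b^i \sigma(v^1) w^i + \varepsilon^2 c(v^1) v^2$, whose norm is $O(\varepsilon)$. Absorbing the $O(\varepsilon^3)$ cross term between the $\varepsilon$-part and the $\varepsilon^2$-part into the Taylor remainder gives
\begin{equation*}
\phi(x+\delta^\varepsilon) = \phi(x) + \varepsilon\sqrt{2}\sum_i b^i a^i + \varepsilon^2 c(v^1)\, D\phi(x)\cdot v^2 + \varepsilon^2 \sum_{i,j} b^i b^j B_{ij} + o(\varepsilon^2),
\end{equation*}
with $a^i := (\sigma(v^1)^T D\phi(x))\cdot w^i$ and $B_{ij} := w^i \cdot \sigma(v^1)^T D^2\phi(x) \sigma(v^1) w^j$. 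Every claim of the lemma reduces to a $\max_{\mathbf{b}}$-estimate of this expression.

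For $(1)$, I have Player I play $v^1 = D\phi(x)/|D\phi(x)|$, $v^2 = -v^1$, and take $\{w^i\}$ to be an orthonormal eigenbasis of $\sigma(v^1)^T D^2\phi(x) \sigma(v^1)$. By $(\mathrm{A}\ref{assum:rank})$, $\sigma(v^1)^T v^1 = 0$, so $\sigma(v^1)^T D\phi(x) = 0$ and the whole linear-in-$\mathbf{b}$ term vanishes; the eigenbasis makes $\sum_{i,j} b^i b^j B_{ij}$ equal to the $\mathbf{b}$-independent constant $\tr[\sigma(v^1)\sigma(v^1)^T D^2\phi(x)]$; and the $1$-homogeneity of $c$ from $(\mathrm{A}\ref{assum:homogenity})$ yields $c(v^1)|D\phi(x)| = c(D\phi(x))$. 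Collecting everything produces precisely $\phi(x) - \varepsilon^2 F(D\phi(x),D^2\phi(x)) + o(\varepsilon^2)$. For $(2)$, I lower-bound $\max_\mathbf{b}$ by the average over $\mathbf{b}\in\{\pm 1\}^m$: averaging kills the linear-in-$\mathbf{b}$ term and replaces $\sum b^i b^j B_{ij}$ by $\sum_i B_{ii}$, which equals $\tr[\sigma(v^1)\sigma(v^1)^T D^2\phi(x)]$ by basis-invariance of the trace for any orthonormal $\{w^i\}$. Combining with $D\phi(x)\cdot v^2 \ge -|D\phi(x)|$ (Cauchy--Schwarz) and $c(v^1) \ge 0$ yields the bound uniformly in $(\mathbf{v},\mathbf{w})$.

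Part $(3)$ is the main obstacle: one must lower-bound $\min_{(\mathbf{v},\mathbf{w})}\max_\mathbf{b}$ uniformly and recover the same limit as in $(1)$, whereas the averaging bound of $(2)$ is too weak because $\tr[\sigma(v^1)\sigma(v^1)^T D^2\phi(x)]$ depends on $v^1$. My plan is a case split at the threshold $\eta := K\varepsilon/|D\phi(x)|$ with a large constant $K$ chosen at the end; write $e := D\phi(x)/|D\phi(x)|$ and note that by $(\mathrm{A}\ref{assum:evenness})$ we may compare with $\pm e$ indifferently. In the close regime $\dist(v^1, \{\pm e\}) < \eta$, the Lipschitz bounds from $(\mathrm{A}\ref{assum:Lipschitz-conti})$ give $|\tr[\sigma(v^1)\sigma(v^1)^T D^2\phi(x)] - \tr[\sigma(e)\sigma(e)^T D^2\phi(x)]| = O(\eta)$ and $||D\phi(x)|c(v^1) - c(D\phi(x))| = O(|D\phi(x)|\eta)$, which multiplied by $\varepsilon^2$ become exactly the $(1/|D\phi(x)|+1)o(\varepsilon^2)$ error in the conclusion; the averaging argument of $(2)$ together with Player I's minimizing choice $v^2 = -e$ then delivers the bound. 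In the far regime $\dist(v^1, \{\pm e\}) \ge \eta$, the sign choice $b_i^0 := \mathrm{sgn}(a^i)$ realizes a linear contribution $\varepsilon\sqrt{2}\sum_i |a^i| \ge \varepsilon\sqrt{2}|\sigma(v^1)^T D\phi(x)|$, and $(\mathrm{A}\ref{assum:rank})$ together with the identity $\sigma(v^1)^T v^1 = 0$, continuity of $\sigma\sigma^T$ from $(\mathrm{A}\ref{assum:Lipschitz-conti})$ and compactness of the sphere give a uniform lower bound $|\sigma(v^1)^T D\phi(x)| \ge c_\sigma |D\phi(x)|\dist(v^1, \{\pm e\}) \ge c_\sigma K\varepsilon$, forcing $\max_\mathbf{b}$ of the expansion to exceed $\sqrt{2}c_\sigma K\varepsilon^2 - C\varepsilon^2$; taking $K$ large enough (depending only on $|D\phi(x)|$, $\|D^2\phi(x)\|$, and the sup/Lipschitz constants of $\sigma, c$) makes this exceed the target lower bound by a fixed positive multiple of $\varepsilon^2$, completing the proof of $(3)$.
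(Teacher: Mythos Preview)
Your proof is correct and follows the same overall architecture as the paper's: Taylor-expand, then for $(1)$ pick the optimal $(\hat{\mathbf v},\hat{\mathbf w})$, and for $(3)$ run a dichotomy according to whether $v^1$ is close to $\pm D\phi(x)/|D\phi(x)|$ or not. There are two genuine but minor differences worth noting. First, in $(2)$ you lower-bound $\max_{\mathbf b}$ by the average over $\{\pm1\}^m$, which kills the linear term and diagonalises the quadratic form in one stroke; the paper instead chooses $b^1,\dots,b^m$ sequentially so that each grouped term is nonnegative, which is slightly more laborious but yields the stronger intermediate inequality $\max_{\mathbf b}\phi\ge\phi(x)+\sqrt2\,\varepsilon\max_i|a^i|+\varepsilon^2\sum_iB_{ii}+\varepsilon^2c(v^1)\langle D\phi,v^2\rangle+o(\varepsilon^2)$ that already contains the linear gain used in $(3)$. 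Second, in $(3)$ the paper splits on the size of $\max_i|a^i|$ rather than on $\dist(v^1,\{\pm e\})$; these are equivalent through exactly the estimate $|\sigma(v^1)^TD\phi(x)|\ge c_\sigma|D\phi(x)|\dist(v^1,\{\pm e\})$ you invoke, which follows from $\ker\sigma(v^1)^T=\langle v^1\rangle$ (by~(A\ref{assum:rank})), the uniform lower bound on the nonzero eigenvalues of $\sigma\sigma^T$ coming from (A\ref{assum:Lipschitz-conti}) and compactness, and the elementary sphere identity $|P_{\langle v^1\rangle^\perp}e|\ge\dist(v^1,\{\pm e\})/\sqrt2$. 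Your averaging device is cleaner for $(2)$ alone; the paper's sequential choice has the payoff of making the far-regime step in $(3)$ a direct consequence of the displayed inequality rather than requiring a fresh sign choice $b_i^0=\mathrm{sgn}(a^i)$.
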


\begin{proof}
By the Taylor expansion of $\phi$ at $x$, 
for small $\varepsilon>0$, we have
\begin{align}\label{eq:lem-game-conv-pf1}
  \phi\left(
    x+\delta^\varepsilon (\mathbf{v},\mathbf{w},\mathbf{b})\right)
  =\phi(x)&+\sqrt{2}\varepsilon \sum_{i=1}^m b^i 
  \langle D\phi(x),\sigma(v^1)w^i\rangle\\
  &+\varepsilon^2 \sum_{i,j=1}^m b^ib^j
  \langle D^2\phi(x)\sigma(v^1)w^i,\sigma(v^1)w^j\rangle\notag\\
  &+\varepsilon^2 c(v^1)\langle D\phi(x),v^2 \rangle
  +o(\varepsilon^2).\notag
\end{align}
If $D\phi(x)\neq 0$, choosing $\mathbf{v}$ as 
\begin{equation*}
  \widehat{v}^1 = \frac{D\phi(x)}{|D\phi(x)|},\quad 
  \widehat{v}^2 = -\frac{D\phi(x)}{|D\phi(x)|}
\end{equation*}
and $\mathbf{w}$ as $\{\widehat{w}^i\}$ : the unit eigenvectors 
of the $(m\times m)$-symmetric matrix 
${}^t\!\sigma(D\phi(x))D^2\phi(x)\sigma(D\phi(x))$ 
with respect to each eigenvalue that are orthogonal to each other,
we obtain 
\begin{align*}
  \langle 
  D\phi(x),\sigma(\widehat{v}^1)\widehat{w}^i 
  \rangle 
  = \langle 
    D\phi(x),\sigma(D\phi(x))\widehat{w}^i 
  \rangle
  = 0\quad \text{for } i=1,\ldots,m
\end{align*}
by (A\ref{assum:homogenity}) and (A\ref{assum:rank}).
We also obtain
\begin{align}\label{eq:lem-game-conv-pf3}
  \sum_{i,j=1}^m b^i b^j \langle
    D^2\phi(x)\sigma(\widehat{v}^1)\widehat{w}^i,
    \sigma(\widehat{v}^1)\widehat{w}^j
  \rangle
  &=\sum_{i=1}^m \langle 
    {}^t\!\sigma(D\phi(x))D^2\phi(x)\sigma(D\phi(x))\widehat{w}^i,
    \widehat{w}^i
  \rangle\\
  &=\tr \left[\sigma(D\phi(x)){}^t\!\sigma(D\phi(x))D^2\phi(x)\right].\notag
\end{align}
by (A\ref{assum:homogenity}) and 
the fact that $\{\widehat{w}^i\}$ is an orthonormal basis 
consisting of eigenvectors of\\ ${}^t\!\sigma(D\phi(x))D^2\phi(x)\sigma(D\phi(x))$ and 
$\tr{XY}=\tr{YX}$ holds for any two matrices $X,Y$ 
whose products of both sides are defined.
Then, it follows that 
\begin{align*}
  &\min_{(\mathbf{v},\mathbf{w})\in \mathcal{D}}\max_{\mathbf{b}\in\mathcal{S}}\phi\left(
    x+\delta^\varepsilon (\mathbf{v},\mathbf{w},\mathbf{b})
  \right)
  \leq \max_{\mathbf{b}\in\mathcal{S}}\phi(x+\delta^\varepsilon(\widehat{\mathbf{v}},\widehat{\mathbf{w}},\mathbf{b}))\\
  &\leq \max_{\mathbf{b}\in\mathcal{S}}\left\{\phi(x)
  +\varepsilon^2 \sum_{i,j=1}^m b^i b^j \langle
    D^2\phi(x)\sigma(\widehat{v}^1)\widehat{w}^i,
    \sigma(\widehat{v}^1)\widehat{w}^j
  \rangle
  -\varepsilon^2 c(D\phi(x))
  +o(\varepsilon^2)\right\}\\
  &\leq \phi(x) - \varepsilon^2 F(D\phi(x),D^2\phi(x))+o(\varepsilon^2)
\end{align*}
by (\ref{eq:lem-game-conv-pf3}).
This gives inequality (1). 

We would like to show the rest.
First, calculating a part of terms in (\ref{eq:lem-game-conv-pf1}) as
\begin{align*}
  &\sqrt{2}\varepsilon \sum_{i=1}^m b^i 
  \langle D\phi(x), \sigma(v^1)w^i \rangle
  +\varepsilon^2 \sum_{i,j=1}^m b^ib^j
  \langle D^2\phi(x)\sigma(v^1)w^i,\sigma(v^1)w^j \rangle\\
  &=\sqrt{2}\varepsilon b^1 \langle D\phi(x),\sigma(v^1)w^1 \rangle
  +\varepsilon^2 \sum_{i=1}^m \langle D^2 \phi(x)\sigma(v^1)w^i,\sigma(v^1)w^i\rangle\\
  &\quad +\sum_{i=2}^m b^i \left(
    \sqrt{2}\varepsilon \langle D\phi(x),\sigma(v^1)w^i \rangle
    + 2\varepsilon^2 \sum_{j=1}^{i-1} b^j \langle
      D^2\phi(x)\sigma(v^1)w^i,\sigma(v^1)w^j
    \rangle
  \right),
\end{align*}
we can choose $b^1,\ldots,b^m$ in this order 
so that each term summarized by $b^i$ is positive. 
This implies 
\begin{align*}
  &\max_{\mathbf{b}\in\mathcal{S}}\phi\left( x+\delta^\varepsilon(\mathbf{v},\mathbf{w},\mathbf{b}) \right)\\
  &\geq \phi(x)+\sqrt{2}\varepsilon|\langle D\phi(x),\sigma(v^1)w^1\rangle|
  +\varepsilon^2\sum_{i=1}^m \langle D^2\phi(x)\sigma(v^1)w^i,\sigma(v^1)w^i\rangle\\
  &\quad\quad\quad +\varepsilon^2 \langle D\phi(x),c(v^1)v^2\rangle 
  + o(\varepsilon^2)
\end{align*}
for all $(\mathbf{v},\mathbf{w})\in\mathcal{D}$.
Similarly for $i=2,\ldots,m$, we have
\begin{align}\label{eq:lem-game-conv-pf2}
  &\max_{\mathbf{b}\in\mathcal{S}}\phi\left( x+\delta^\varepsilon(\mathbf{v},\mathbf{w},\mathbf{b}) \right)\\
  &\geq \phi(x)+\max_{i=1,\ldots,m}\sqrt{2}\varepsilon|\langle D\phi(x),\sigma(v^1)w^i\rangle|
  +\varepsilon^2\sum_{i=1}^m \langle D^2\phi(x)\sigma(v^1)w^i,\sigma(v^1)w^i\rangle\notag\\
  &\quad\quad\quad +\varepsilon^2 \langle D\phi(x),c(v^1)v^2\rangle 
  + o(\varepsilon^2)\notag
\end{align}
for all $(\mathbf{v},\mathbf{w})\in\mathcal{D}$.
Similarly to (\ref{eq:lem-game-conv-pf3}),
we can compute that $\sum_{i}\langle D^2\phi (x)\sigma(v^1)w^i,\sigma(v^1)w^i\rangle=\tr [\sigma(v^1){}^t\!\sigma(v^1)D^2\phi(x)]$. 
Thus, (2) straightly follows from (\ref{eq:lem-game-conv-pf2}).

To derive the inequality (3), we assume $D\phi(x)\neq 0$, and then we see the following claim.\\
\textbf{Claim.} 
Let $\Lambda:=\max_{|v|=1}|\langle D^2\phi(x)v,v \rangle|$. 
Then, the following two properties hold for every $(\mathbf{v},\mathbf{w})\in\mathcal{D}$. 
\begin{enumerate}[(i)]
  \item If $\max_{i=1,\ldots,m}\sqrt{2}\varepsilon|\langle D\phi(x),\sigma(v^1)w^i\rangle|
  \geq 2\varepsilon^2 (m\Lambda+|D\phi(x)|\lVert c \rVert_{L^\infty(\partial B(0,1))})$, 
  then 
  \begin{align*}
    &\max_{i=1,\ldots,m}\sqrt{2}\varepsilon |\langle D\phi(x),\sigma(v^1)w^i \rangle|
    +\varepsilon^2\sum_{i=1}^m \langle D^2\phi(x)\sigma(v^1)w^i,\sigma(v^1)w^i\rangle
    +\varepsilon^2c(v^1)\langle D\phi(x),v^2 \rangle\\
    &\geq \varepsilon^2 \tr [\sigma(D\phi(x)){}^t\!\sigma(D\phi(x))D^2\phi(x)]
    +\varepsilon^2 c\left(\frac{D\phi(x)}{|D\phi(x)|}\right) \left\langle D\phi(x), v^2\right\rangle.
  \end{align*}
  \item If $\max_{i=1,\ldots,m}\sqrt{2}\varepsilon|\langle D\phi(x),\sigma(v^1)w^i\rangle|
  < 2\varepsilon^2 (m\Lambda+|D\phi(x)|\lVert c \rVert_{L^\infty(\partial B(0,1))})$, 
  then 
  \begin{align*}
    &\max_{i=1,\ldots,m}\sqrt{2}\varepsilon |\langle D\phi(x),\sigma(v^1)w^i \rangle|
    +\varepsilon^2\sum_{i=1}^m \langle D^2\phi(x)\sigma(v^1)w^i,\sigma(v^1)w^i\rangle
    +\varepsilon^2c(v^1)\langle D\phi(x),v^2 \rangle\\
    &\geq \varepsilon^2 \tr [\sigma(D\phi(x)){}^t\!\sigma(D\phi(x))D^2\phi(x)]
    +\varepsilon^2 c\left(\frac{D\phi(x)}{|D\phi(x)|}\right)\left\langle D\phi(x), v^2\right\rangle
    +o(\varepsilon^2).
  \end{align*}
\end{enumerate}
\textit{Proof of Claim.} (i) is clear by the setting of $\Lambda$. 
We need to check (ii). 
Assume \\$\max_{i=1,\ldots,m}\sqrt{2}\varepsilon|\langle D\phi(x),\sigma(v^1)w^i\rangle|
  < 2\varepsilon^2 (m\Lambda+|D\phi(x)|\lVert c \rVert_{L^\infty(\partial B(0,1))})$. Then, 
\begin{align*}
  \dist(\sigma(v^1)w^i,\langle D\phi(x) \rangle^\perp)
  =\left|\left\langle\frac{D\phi(x)}{|D\phi(x)|},\sigma(v^1)w^i\right\rangle\right|
  <C\left(\frac{\Lambda}{|D\phi(x)|}+1\right)\varepsilon
\end{align*}
for $i=1,\ldots,m$. 
Thus, assumptions $(\mathrm{A}\ref{assum:Lipschitz-conti})$, $(\mathrm{A}\ref{assum:evenness})$
and $(\mathrm{A}\ref{assum:rank})$ 
of $\sigma$ implies 
\begin{align*}
  \left|v^1-\frac{D\phi(x)}{|D\phi(x)|}\right|\quad \text{or}\quad
  \left|v^1+\frac{D\phi(x)}{|D\phi(x)|}\right|\leq C\left(\frac{\Lambda}{|D\phi(x)|}+1\right)\varepsilon,
\end{align*}
and thus 
\begin{align*}
  &\varepsilon^2\sum_{i=1}^m \langle D^2\phi(x)\sigma(v^1)w^i,\sigma(v^1)w^i\rangle
  +\varepsilon^2c(v^1)\langle D\phi(x),v^2 \rangle\\
  &\geq \varepsilon^2 \tr [\sigma(D\phi(x)){}^t\!\sigma(D\phi(x))D^2\phi(x)]
  +\varepsilon^2 c\left(\frac{D\phi(x)}{|D\phi(x)|}\right)\langle D\phi(x), v^2\rangle
  -C\left(\frac{\Lambda}{|D\phi(x)|}+1\right)\varepsilon^3.
\end{align*}
We obtain (ii) from this equality.

Finally, the following inequality holds for all $(\mathbf{v},\mathbf{w})\in\mathcal{D}$.
\begin{align*}
  &\max_{\mathbf{b}\in\mathcal{S}}\phi\left( x+\delta^\varepsilon(\mathbf{v},\mathbf{w},\mathbf{b}) \right)\\
  &\geq \phi(x)+\max_{i=1,\ldots,m}\sqrt{2}\varepsilon |\langle D\phi(x),\sigma(v^1)w^i \rangle|
  +\varepsilon^2\sum_{i=1}^m \langle D^2\phi(x)\sigma(v^1)w^i,\sigma(v^1)w^i\rangle\\
  &\quad\quad\quad +\varepsilon^2 c(v^1)\langle D\phi(x),v^2\rangle + o(\varepsilon^2)\\
  &\geq \phi(x)+\varepsilon^2 \tr [\sigma(D\phi(x)){}^t\!\sigma(D\phi(x))D^2\phi(x)]
  +\varepsilon^2 c\left(\frac{D\phi(x)}{|D\phi(x)|}\right)\langle D\phi(x), v^2\rangle
  +o(\varepsilon^2).
\end{align*}
Taking minimum for $(\mathbf{v},\mathbf{w})$, we get a conclusion.
\end{proof}

\begin{proof}[Proof of Theorem $\ref{thm:game-conv}$]
Upper and lower semi continuity of $\overline{u}$ and $\underline{u}$, 
respectively, is clear. \\
\textbf{Subsolution test for }$\overline{u}$ \textbf{.} 
We first show that $\overline{u}+F_*(D\overline{u},D^2\overline{u})\leq 1$ 
in $\Omega$. 
Fix $x_0\in\Omega$ and consider $r>0$ 
and a smooth function $\phi$ such that
\begin{equation}\label{eq:game-conv-pf1}
  \overline{u}(x_0)=\phi(x_0),\ B(x_0,r)\subset\Omega,\ 
  \overline{u}-\phi < 0 \text{ in } B(x_0,r)\setminus\{x_0\}.
\end{equation}
There exist sequences $\{\widetilde{x}_k\}\subset B(x_0,r)$ 
and $\{\varepsilon_k\}$ such that
\begin{equation*}
  \widetilde{x}_k\to x_0,\ \varepsilon_k\to 0,\ 
  u^{\varepsilon_k}(\widetilde{x}_k)\to \overline{u}(x_0) \text{ as }k\to\infty
\end{equation*}
by the definition of $\overline{u}$. 
For every $k$, choose $x_k\in B(x_0,r)$ satisfying
\begin{equation*}
  (u^{\varepsilon_k}-\phi)(x_k)
  >\sup_{B(x_0,r)}(u^{\varepsilon_k}-\phi)-\varepsilon_k^3
  \geq (u^{\varepsilon_k}-\phi)(\widetilde{x}_k)-\varepsilon_k^3.
\end{equation*}
Since $\overline{u}-\phi$ attains its local strict maximum at $x_0$, 
taking a subsequence if necessary, $\{x_k\}$ satisfies 
\begin{equation*}
  x_k\to x_0,\ u^{\varepsilon_k}(x_k)\to \overline{u}(x_0).
\end{equation*}
In this way, we got the existence of sequences $\{x_k\}$ and $\{\varepsilon_k\}$ 
satisfying
\begin{equation}\label{eq:game-conv-pf2}
  x_k\to x_0,\ 
  \varepsilon_k\to 0,\ 
  u^{\varepsilon_k}(x_k)\to \overline{u}(x_0),\ 
  (u^{\varepsilon_k}-\phi)(x_k) > \sup_{B(x_0,r)}(u^{\varepsilon_k}-\phi)-\varepsilon_k^3.
\end{equation}
By (\ref{eq:game-conv-pf2}) and $|\delta^{\varepsilon}(\mathbf{v},\mathbf{w},\mathbf{b})|=O(\varepsilon)$, 
the inequality 
\begin{equation*}
  u^{\varepsilon_k}(x_k+\delta^{\varepsilon_k}(\mathbf{v},\mathbf{w},\mathbf{b}))
  \leq\phi(x_k+\delta^{\varepsilon_k}(\mathbf{v},\mathbf{w},\mathbf{b}))
  +(u^{\varepsilon_k}-\phi)(x_k)+\varepsilon_k^3
\end{equation*}
holds for all $(\mathbf{v},\mathbf{w})\in\mathcal{D}$, $\mathbf{b}\in\mathcal{S}$ 
if $k$ is sufficiently large.
Substituting the above into the dynamic programming principle of $u^{\varepsilon_k}$, 
we have
\begin{align}\label{eq:game-conv-pf3}
  u^{\varepsilon_k}(x_k)
  &= \inf_{(\mathbf{v},\mathbf{w})\in \mathcal{D}}\max_{\mathbf{b}\in\mathcal{S}}
  \left\{ 1-e^{-\varepsilon_k^2} +e^{-\varepsilon_k^2}
  u^{\varepsilon_k}(x_k+\delta^{\varepsilon_k}(\mathbf{v},\mathbf{w},\mathbf{b}))\right\}\\
  &\leq \min_{(\mathbf{v},\mathbf{w})\in \mathcal{D}}\max_{\mathbf{b}\in\mathcal{S}}
  \left\{ 1-e^{-\varepsilon_k^2} +e^{-\varepsilon_k^2}
  \phi(x_k+\delta^{\varepsilon_k}(\mathbf{v},\mathbf{w},\mathbf{b}))\right\}
  +e^{-\varepsilon_k^2}\left\{(u^{\varepsilon_k}-\phi)(x_k)+\varepsilon_k^3\right\}\notag
\end{align}
for sufficiently large $k$. 
Now, we need to consider two cases since $F(p,X)$ is singular at $p=0$. \\
\textbf{Case 1.} 
The case that $D\phi(x_0)\neq 0$. 
In this case, $D\phi(x_k)\neq 0$ for every large $k$ and 
then Lemma \ref{lem:lem-game-conv} (1) implies
\begin{align*}
  \min_{(\mathbf{v},\mathbf{w})\in \mathcal{D}}\max_{\mathbf{b}\in\mathcal{S}}\phi\left(
      x_k+\delta^{\varepsilon_k} (\mathbf{v},\mathbf{w},\mathbf{b})
    \right)
    \leq\phi(x_k) - \varepsilon_k^2 F(D\phi(x_k),D^2\phi(x_k)) + o(\varepsilon_k^2).
\end{align*}
Substituting this into (\ref{eq:game-conv-pf3}), 
we have 
\begin{equation}\label{eq:game-conv-pf4}
  (1-e^{-\varepsilon_k^2})u^{\varepsilon_k}(x_k)
  \leq 1-e^{-\varepsilon_k^2} 
  -\varepsilon_k^2e^{-\varepsilon_k^2}F(D\phi(x_k),D^2\phi(x_k))
  +o(\varepsilon_k^2).
\end{equation}
Dividing both sides by $\varepsilon_k^2$ and letting $k\to \infty$, 
we conclude that 
\begin{align*}
  \overline{u}(x_0)\leq 1-F(D\phi(x_0),D^2\phi(x_0)).
\end{align*}
\textbf{Case 2.} The case that $D\phi(x_0)=0$. 
If there is a subsequence $\{k_l\}$ with $D\phi(x_{k_l})\neq 0$ 
and $D\phi(x_{k_l})\to 0$, we apply (\ref{eq:game-conv-pf4}) 
for this sub sequence and conclude that
\begin{align*}
  \overline{u}(x_0)+F_*(0,D^2\phi(x_0))
  &\leq \liminf_{l\to \infty}
  \left\{\frac{1-e^{-\varepsilon_{k_l}^2}}{\varepsilon_{k_l}^2}u^{\varepsilon_{k_l}}(x_{k_l})
  +e^{-\varepsilon_{k_l}^2}F(D\phi(x_{k_l}),D^2\phi(x_{k_l}))\right\}\\
  &\leq \lim_{l\to\infty} \left\{\frac{1-e^{-\varepsilon_{k_l}^2}}{\varepsilon_{k_l}^2} + o(1)\right\}
  =1.
\end{align*}
On the other hand, if $D\phi(x_k)\equiv 0$, then the Taylor expansion implies
\begin{align}\label{eq:game-conv-pf5}
  &\min_{(\mathbf{v},\mathbf{w})\in \mathcal{D}}\max_{\mathbf{b}\in\mathcal{S}}\phi\left(
      x_k+\delta^{\varepsilon_k} (\mathbf{v},\mathbf{w},\mathbf{b})
    \right)\\
  &=\min_{\substack{|v^1|=1\\ \mathbf{w}\in\mathcal{D}_2}}\max_{\mathbf{b}\in\mathcal{S}}\left\{ \phi(x_k)+
    \varepsilon_k^2\sum_{i,j=1}^m b^ib^j \langle 
      D^2\phi(x_k)\sigma(v^1)w^i,\sigma(v^1)w^j
    \rangle +o(\varepsilon_k^2)
    \right\}\notag
\end{align}
For every $v^1$, choosing a set of eigenvectors $\{\widehat{w}^i\}$ 
of the $(m\times m)$-symmetric matrix\\
${}^t\!\sigma(v^1)D^2\phi(x_k)\sigma(v^1)$ 
with respect to each eigenvalue which forms an orthonormal basis of $\R^m$, 
we have 
\begin{align}\label{eq:game-conv-pf6}
  &\tr[\sigma(v^1){}^t\!\sigma(v^1)D^2\phi(x_k)]
  =\max_{\mathbf{b}\in\mathcal{S}}
  \sum_{i,j=1}^m b^i b^j \langle {}^t\! \sigma(v^1)D^2\phi(x_k)\sigma(v^1)\widehat{w}^i,
  \widehat{w}^j\rangle\\
  &\geq \min_{\substack{|v'^1|=1\\ \mathbf{w}'\in\mathcal{D}_2}}
  \max_{\mathbf{b}\in\mathcal{S}}
  \sum_{i,j=1}^m b^ib^j
  \langle D^2\phi(x_k)\sigma(v'^1)w'^i,\sigma(v'^1)w'^j\rangle.\notag
\end{align}
We combining (\ref{eq:game-conv-pf5}) with (\ref{eq:game-conv-pf6}) 
and Substituting it into (\ref{eq:game-conv-pf3}), 
it follows that
\begin{equation*}
  (1-e^{-\varepsilon_k^2})u^{\varepsilon_k}(x_k)
  \leq 1-e^{-\varepsilon_k^2} 
  +\varepsilon_k^2e^{-\varepsilon_k^2}\tr[\sigma(v^1){}^t\!\sigma(v^1)D^2\phi(x_k)]
  +o(\varepsilon_k^2),\quad \forall v^1\in\partial B(0,1).
\end{equation*}
Note that $\sigma(\cdot)$ is $0$-homogenous function and then we have
\begin{align*}
  \lim_{k\to \infty}\tr[\sigma(v^1){}^t\!\sigma(v^1)D^2\phi(x_k)]
  &=\lim_{k\to \infty}\left\{ 
    \tr[\sigma(\varepsilon_kv^1){}^t\!\sigma(\varepsilon_kv^1)D^2\phi(x_k)]-c(\varepsilon_k v^1)
    \right\}\\
  &\leq (-F)^*(0,D^2\phi(x_0)) = -F_*(0,D^2\phi(x_0))
\end{align*}
for all $v^1\in\partial B(0,1)$. 
Therefore, we conclude that 
\begin{equation*}
  \overline{u}(x_0)\leq 1-F_*(0,D^2\phi(x_0)).
\end{equation*}

Next, we show the sub boundary condition of $\overline{u}$.
Fix a boundary point $x_0\in\partial \Omega$ and assume 
$(\overline{u}-g)(x_0)=:\alpha>0$.
Choose $r>0$ and $\phi\in C^\infty (\overline{\Omega})$ satisfying 
\begin{equation*}
  \overline{u}(x_0)=\phi(x_0),\ 
  \overline{u}-\phi <0 \ \text{on } 
  (B(x_0,r)\cap\overline{\Omega})\setminus \{x_0\},\ 
  g < g(x_0)+\frac{\alpha}{4} \ \text{in } B(x_0,r).
\end{equation*}
Similarly to (\ref{eq:game-conv-pf2}), 
there exist sequences $\{x_k\}\subset B(x_0,r)\cap\Omega$, 
$\{\varepsilon_k\}$ such that
\begin{equation}
  x_k\to x_0,\ \varepsilon_k\to 0,\ 
  u^{\varepsilon_k}(x_k)\to \overline{u}(x_0),\ 
  (u^{\varepsilon_k}-\phi)(x_k)
  > \sup_{B(x_0,r)\cap\Omega}(u^{\varepsilon_k} -\phi)-\varepsilon_k^3.
\end{equation}
Then, $u^{\varepsilon_k}(x_k)\to \overline{u}(x_0)$ implies, 
for sufficiently large $k$, 
\begin{align*}
  g(x_0)+\frac{\alpha}{2}
  &<u^{\varepsilon_k} (x_k)\\
  &=\inf_{(\mathbf{v},\mathbf{w})\in \mathcal{D}}\max_{\mathbf{b}\in\mathcal{S}}
  \left\{
  \begin{aligned}
    &1-e^{-\varepsilon_k^2}+e^{-\varepsilon_k^2}
    u^{\varepsilon_k} (x_k+\delta^{\varepsilon_k}(\mathbf{v},\mathbf{w},\mathbf{b}))
    &&\text{if}\ x+\delta^{\varepsilon_k}(\mathbf{v},\mathbf{w},\mathbf{b})\in \Omega,\\
    &1-e^{-\varepsilon_k^2}+e^{-\varepsilon_k^2}
    g (x_k+\delta^{\varepsilon_k}(\mathbf{v},\mathbf{w},\mathbf{b}))
    &&\text{if}\ x+\delta^{\varepsilon_k}(\mathbf{v},\mathbf{w},\mathbf{b})\not\in \Omega
  \end{aligned}
  \right.\\
  &\leq \min_{(\mathbf{v},\mathbf{w})\in \mathcal{D}}\max_{\mathbf{b}\in\mathcal{S}}
  \left\{
  \begin{aligned}
    &1-e^{-\varepsilon_k^2}+e^{-\varepsilon_k^2}
    \phi (x_k+\delta^{\varepsilon_k}(\mathbf{v},\mathbf{w},\mathbf{b}))
    +e^{-\varepsilon_k^2}\left\{(u^{\varepsilon_k}-\phi)(x_k)+\varepsilon_k^3\right\}\\
    &\quad\quad\quad\quad\quad\quad\quad\quad\quad\quad\quad\quad\quad\quad\quad\quad\quad\quad
    \text{if}\ x+\delta^{\varepsilon_k}(\mathbf{v},\mathbf{w},\mathbf{b})\in \Omega,\\
    &1-e^{-\varepsilon_k^2}+e^{-\varepsilon_k^2}
    g (x_k+\delta^{\varepsilon_k}(\mathbf{v},\mathbf{w},\mathbf{b}))
    \quad\quad\text{if}\ x+\delta^{\varepsilon_k}(\mathbf{v},\mathbf{w},\mathbf{b})\not\in \Omega.
  \end{aligned}
  \right.
\end{align*}
However, if $k$ is sufficiently large, it holds that
\begin{equation*}
  1-e^{-\varepsilon_k^2}
  +e^{-\varepsilon_k^2}g(x_k+\delta^{\varepsilon_k}(\mathbf{v},\mathbf{w},\mathbf{b}))
  <\frac{\alpha}{4}+g(x_0)+\frac{\alpha}{4}=g(x_0)+\frac{\alpha}{2}.
\end{equation*}
Therefore, $\min_{(\mathbf{v},\mathbf{w})\in \mathcal{D}}\max_{\mathbf{b}\in\mathcal{S}}$ is taken only for 
$(\mathbf{v},\mathbf{w})$ and $\mathbf{b}$ 
that $x+\delta^{\varepsilon_k}(\mathbf{v},\mathbf{w},\mathbf{b})\in\Omega$. 
And by taking the appropriate extention of $\phi$ to $B(x_0,r)$, 
we have 
\begin{equation*}
  u^{\varepsilon_k} (x_k)
  \leq\min_{(\mathbf{v},\mathbf{w})\in\mathcal{D}}
  \max_{\mathbf{b}\in\mathcal{S}}
  \left\{
  1-e^{-\varepsilon_k^2}+e^{-\varepsilon_k^2}\phi
  (x_k+\delta^{\varepsilon_k}(\mathbf{v},\mathbf{w},\mathbf{b}))
  \right\}
  +e^{-\varepsilon_k^2}\left\{(u^{\varepsilon_k}-\phi)(x_k)+\varepsilon_k^3\right\}.
\end{equation*}
Thus, we obtain that 
\begin{equation*}
  \overline{u}(x_0)+F_*(D\phi(x_0),D^2\phi(x_0))\leq 1
\end{equation*}
by the same calculation as the case that $x_0\in \Omega$. \\
\textbf{Supersolution test for }$\underline{u}$\textbf{.}
Fix $x_0\in\Omega$ and consider $r>0$ 
and smooth function $\phi$ such that
\begin{equation*}
  \underline{u}(x_0)=\phi(x_0),\ B(x_0,r)\subset\Omega,\ 
  \underline{u}-\phi > 0 \text{ in } B(x_0,r)\setminus\{x_0\}.
\end{equation*}
Similarly to (\ref{eq:game-conv-pf2}), 
there exist sequences $\{x_k\}\subset B(x_0,r)$ and
$\{\varepsilon_k\}$ such that
\begin{equation}\label{eq:game-conv-pf7}
  x_k\to x_0,\ \varepsilon_k\to 0,\ 
  u^{\varepsilon_k}(x_k)\to \underline{u}(x_0),\ 
  (u^{\varepsilon_k}-\phi)(x_k) < \inf_{B(x_0,r)}(u^{\varepsilon_k}-\phi)+\varepsilon_k^3.
\end{equation}
Similarly to (\ref{eq:game-conv-pf3}), we have
\begin{align}\label{eq:game-conv-pf8}
  u^{\varepsilon_k}(x_k)
  &= \inf_{(\mathbf{v},\mathbf{w})\in \mathcal{D}}\max_{\mathbf{b}\in\mathcal{S}}
  \left\{ 1-e^{-\varepsilon_k^2} +e^{-\varepsilon_k^2}
  u^{\varepsilon_k}(x_k+\delta^{\varepsilon_k}(\mathbf{v},\mathbf{w},\mathbf{b}))\right\}\\
  &\geq \min_{(\mathbf{v},\mathbf{w})\in \mathcal{D}}\max_{\mathbf{b}\in\mathcal{S}}
  \left\{ 1-e^{-\varepsilon_k^2} +e^{-\varepsilon_k^2}
  \phi(x_k+\delta^{\varepsilon_k}(\mathbf{v},\mathbf{w},\mathbf{b}))\right\}
  +e^{-\varepsilon_k^2}\left\{(u^{\varepsilon_k}-\phi)(x_k)-\varepsilon_k^3\right\}\notag
\end{align}
for sufficiently large $k$.\\
\textbf{Case} $\mathbf{1'}$\textbf{.} The case that $D\phi(x_0)\neq 0$. 
We noting that $|D\phi(x_k)|$ is bounded 
from below by a positive constant and using 
Lemma \ref{lem:lem-game-conv} (3), 
it follows that 
\begin{equation*}
  \underline{u}(x_0) \geq 1-F(D\phi(x_0),D^2\phi(x_0))
\end{equation*}
by the same calculation as (\ref{eq:game-conv-pf4}).\\
\textbf{Case} $\mathbf{2'}$\textbf{.} The case that $D\phi(x_0)=0$.
Using Lemma \ref{lem:lem-game-conv} (2) for sufficiently large $k$, 
we have
\begin{equation*}
  \max_{\mathbf{b}\in\mathcal{S}}\phi\left(
    x_k+\delta^{\varepsilon_k} (\mathbf{v},\mathbf{w},\mathbf{b})
  \right)
  \geq \phi(x_k)+\varepsilon_k^2 \left\{
    \tr[\sigma(v^1){}^t\!\sigma(v^1)D^2\phi(x_k)]-c(v^1)|D\phi(x_k)|
  \right\}+o(\varepsilon_k^2)
\end{equation*}
for all $v^1\in\partial B(0,1)$. 
Thus, 
\begin{align*}
  u^{\varepsilon_k}(x_k)
  &\geq \min_{(\mathbf{v},\mathbf{w})\in \mathcal{D}}\max_{\mathbf{b}\in\mathcal{S}}
  \left\{ 1-e^{-\varepsilon_k^2} +e^{-\varepsilon_k^2}
  \phi(x_k+\delta^{\varepsilon_k}(\mathbf{v},\mathbf{w},\mathbf{b}))\right\}
  +e^{-\varepsilon_k^2}\left\{(u^{\varepsilon_k}-\phi)(x_k)-\varepsilon_k^3\right\}\\
  &\geq 1-e^{-\varepsilon_k^2}+e^{-\varepsilon_k^2}
  \min_{v^1}\left\{
    \tr [\sigma(v^1){}^t\!\sigma(v^1)D^2\phi(x_k)]-c(v^1)|D\phi(x_k)|
  \right\}\\
  &\quad \quad +e^{-\varepsilon_k^2}u^{\varepsilon_k}(x_k)+o(\varepsilon_k^2).
\end{align*}
And thus, 
\begin{align*}
  &\underline{u}(x_0)+F^*(0,D^2\phi(x_0))\\
  &\geq \limsup_{k\to\infty}\left\{
    \frac{1-e^{-\varepsilon_k^2}}{\varepsilon_k^2}
    u^{\varepsilon_k}(x_k)
    +e^{-\varepsilon_k^2}\max_{v^1}
    \left\{
      -\tr [\sigma(v^1){}^t\sigma(v^1)D^2\phi(x_k)]+c(v^1)|D\phi(x_k)|
    \right\}
  \right\}\\
  &\geq \lim_{k\to\infty}\left\{
    \frac{1-e^{-\varepsilon_k^2}}{\varepsilon_k^2}+o(1)
  \right\}
  =1.
\end{align*}

The super boundary condition of $\underline{u}$ 
is shown by the same way as the sub boundary condition of $\overline{u}$.
\end{proof}

\section{Comparison for solutions and value of the game}

In this section, we provide the proof of our main result, Theorem 1.1.
Since the arguments for the proofs of (1) and (2) cannot be entirely parallel, 
we prove each of them separately.
Through this section, we denote by $u^\varepsilon$ and $U^\varepsilon$,
respectively,
the function defined by 
\begin{equation*}
  u^\varepsilon(x)
  = \inf_{(\mathbf{v},\mathbf{w})\in \mathcal{D}}\max_{\mathbf{b}\in\mathcal{S}}
  \left\{
  \begin{aligned}
    &1-e^{-\varepsilon^2}+e^{-\varepsilon^2}
      u^\varepsilon (x+\delta^\varepsilon(\mathbf{v},\mathbf{w},\mathbf{b}))
      &&\text{if}\ x+\delta^\varepsilon(\mathbf{v},\mathbf{w},\mathbf{b}) \in \R^n\setminus \overline{D}_0,\\
      &1-e^{-\varepsilon^2}+e^{-\varepsilon^2}
      \psi(G (x+\delta^\varepsilon(\mathbf{v},\mathbf{w},\mathbf{b})))
      &&\text{if}\ x+\delta^\varepsilon(\mathbf{v},\mathbf{w},\mathbf{b}) \in \overline{D}_0
  \end{aligned}
  \right.
\end{equation*}
and 
\begin{equation*}
  U^\varepsilon(x)=\psi^{-1}(u^\varepsilon(x)),
\end{equation*}
where $D_0\subset\R^n$ is a given open set 
and $G$ is a given continuous function defined on 
a neighborhood of $\partial D_0$.
Recall that we have defined 
$\psi : \R\cup\{+\infty\}\to (-\infty,1]$ by 
\begin{align*}
  \psi(r) = \left\{
  \begin{aligned}
    &1-e^{-r} &&\text{if } r<+\infty,\\
    &1 &&\text{if } r=+\infty.
  \end{aligned}
  \right.
\end{align*}
Let 
\begin{align*}
  &\overline{u}(x)
  = \limsup_{\substack{\R^n\setminus \overline{D}_0 \ni y\to x \\ \varepsilon \to 0}} u^\varepsilon (y),
  \quad 
  \underline{u}(x)
  = \liminf_{\substack{\R^n\setminus \overline{D}_0 \ni y\to x \\ \varepsilon \to 0}} u^\varepsilon (y),
  \quad x\in\R^n \setminus D_0,\\
  &\overline{U}(x)
  = \limsup_{\substack{\R^n\setminus \overline{D}_0 \ni y\to x \\ \varepsilon \to 0}} U^\varepsilon (y),
  \quad 
  \underline{U}(x)
  = \liminf_{\substack{\R^n\setminus \overline{D}_0 \ni y\to x \\ \varepsilon \to 0}} U^\varepsilon (y),
  \quad x\in\R^n \setminus D_0.
\end{align*}

\subsection{A proof of comparison principle}
We begin with the proof of Theorem 1.1(2).

\begin{proof}[Proof of Theorem $1.1 (2)$]
Let $v(x)=\psi(V(x))$ for $x\in \widetilde{D}\setminus D_0$ 
and $g(x)=\psi(G(x))$ for $x\in \partial D_0$.
Then, $v$ satisfies 
\begin{align*}
  \left\{
  \begin{aligned}
    v+F(Dv,D^2v)&\geq 1 &&\text{in } \widetilde{D}\setminus \overline{D}_0,\\
    v &\geq g &&\text{on }\partial D_0
  \end{aligned}
  \right.
\end{align*}
in the sense of viscosity solutions and 
\begin{align*}
  v< \psi(t) \ \text{in } \widetilde{D}\setminus \overline{D}_0,
  \quad v(x) \to \psi(t)\ \text{as } x\to x_0
  \text{ for all } x_0\in\partial \widetilde{D}
\end{align*}
in the usual sense.
For each small $\varepsilon >0$, 
let $v^\varepsilon\in \mathcal{B}(\widetilde{D}\setminus\overline{D}_0)$ 
be the function defined by 
\begin{align*}
  v^\varepsilon (x)
  = \inf_{(\mathbf{v},\mathbf{w})\in \mathcal{D}}\max_{\mathbf{b}\in\mathcal{S}}
  \left\{
  \begin{aligned}
    &1-e^{-\varepsilon^2}+e^{-\varepsilon^2}
    v^\varepsilon (x+\delta^\varepsilon(\mathbf{v},\mathbf{w},\mathbf{b}))
    &&\text{if}\ x+\delta^\varepsilon(\mathbf{v},\mathbf{w},\mathbf{b}) 
    \in \widetilde{D}\setminus \overline{D}_0,\\
    &1-e^{-\varepsilon^2}+e^{-\varepsilon^2}
    g (x+\delta^\varepsilon(\mathbf{v},\mathbf{w},\mathbf{b}))
    &&\text{if}\ x+\delta^\varepsilon(\mathbf{v},\mathbf{w},\mathbf{b})
    \in \overline{D}_0,\\
    &1-e^{-\varepsilon^2}+e^{-\varepsilon^2}\psi(t)
    &&\text{if}\ x+\delta^\varepsilon(\mathbf{v},\mathbf{w},\mathbf{b})
    \in \R^n\setminus \widetilde{D}
  \end{aligned}
  \right.
\end{align*}
for a continuous extention of $g$ to 
a neighborhood of $\partial D_0$.
By the conclutions of 
Theorem \ref{thm:game-conv} for $v^\varepsilon$ and
Theorem \ref{thm:comparison-Dirichlet},
we have 
\begin{align}\label{eq:comparison-game1-pf4}
  v(x)\geq \limsup_{\substack{
    \widetilde{D}\setminus \overline{D}_0\ni y\to x\\
    \varepsilon\to 0}} v^\varepsilon (y)
  \quad \text{for all } x\in \widetilde{D}\setminus D_0.
\end{align}

Fix $x\in \widetilde{D}\setminus D_0$ 
and take $\alpha>0$ arbitrarily small with $\psi(t)-v(x)>4\alpha>0$.
Then, the inequality
\begin{align}\label{eq:comparison-game1-pf1}
  \psi(t)-2\alpha 
  \geq v(x)+2\alpha \geq \limsup_{\substack{
    \widetilde{D}\setminus \overline{D}_0\ni y\to x\\
    \varepsilon\to 0}} v^\varepsilon (y)+2\alpha 
  \geq v^\varepsilon (x')+\alpha
\end{align}
holds if $\varepsilon$ and $|x'-x|$ are sufficiently small.

Here, we define:
for each $\varepsilon$ and $x'$ satisfying (\ref{eq:comparison-game1-pf1}),
we say the sequence 
$\{(\mathbf{v}_j,\mathbf{w}_j,\mathbf{b}_j)\}_{j=1}^N\subset \mathcal{D}\times\mathcal{S}$
$(N\in\N\cup\{\infty\})$ \textit{follows Player I's strategy} $S^{\text{I}}_v$ 
if for each $1\leq j<N$, we have 
\begin{align*}
  &y_j\in \widetilde{D} \setminus \overline{D}_0,\quad 
  v^\varepsilon(y_{j-1})+\frac{\alpha}{2^{j}}
  >\max_{\mathbf{b}\in\mathcal{S}}\left\{ 1-e^{-\varepsilon^2}+
  e^{-\varepsilon^2}
  \widehat{v}^\varepsilon (y_{j-1} +\delta^\varepsilon(
    \mathbf{v}_{j},\mathbf{w}_{j},\mathbf{b}
  ))\right\},\\
  &\text{and } y_N\not\in \widetilde{D} \setminus \overline{D}_0 
  \text{ if }N<\infty,
\end{align*}
where $\{y_j\}$ is defined by
\begin{align*}
  y_0=x',\quad y_{j}=y_{j-1} + \delta^\varepsilon (
    \mathbf{v}_{j},\mathbf{w}_{j},\mathbf{b}_{j}
  )
\end{align*}
and we set
\begin{align*}
  \widehat{v}^\varepsilon (y) 
  =\left\{
    \begin{aligned}
      &v^\varepsilon (y)
      &&\text{if}\ y\in \widetilde{D}\setminus \overline{D}_0,\\
      &g (y)
      &&\text{if}\ y\in \overline{D}_0,\\
      &\psi(t)
      &&\text{if}\ y\in \R^n\setminus \widetilde{D}.
    \end{aligned}
    \right.
\end{align*}

Then, we have the following claim.\\
\textbf{Claim.} For each $\varepsilon$ and $x'$ satisfying (\ref{eq:comparison-game1-pf1})
and for any $\{(\mathbf{v}_j,\mathbf{w}_j,\mathbf{b}_j)\}_{j=1}^N$
following Player I's strategy $S^{\text{I}}_v$, 
either $N=\infty$ or that $N<\infty$ and $y_N\in \overline{D}_0$.\\
\textit{Proof of Claim.} 
Assume by contradiction that there is a sequence $\{(\mathbf{v}_j,\mathbf{w}_j,\mathbf{b}_j)\}_{j=1}^N$
follows $S^{\text{I}}_v$ and $N<\infty$, $y_N\not\in\overline{D}_0$.
Then, $y_N\in \R^n\setminus \widetilde{D}$ 
and $\widehat{v}^\varepsilon (y_N)=\psi(t)$. 
However, by (\ref{eq:comparison-game1-pf1}), it follows that 
\begin{align*}
  \psi(t)-2\alpha 
  &\geq v^\varepsilon (x')+\alpha 
  = v^\varepsilon(x')+\alpha\sum_{j=1}^\infty \frac{1}{2^{j}}\\
  &> v^\varepsilon(x') +\sum_{j=1}^\infty \frac{\alpha e^{-(j-1)\varepsilon^2}}{2^j}
  =\left(v^\varepsilon(x')+\frac{\alpha}{2}\right)+\frac{\alpha e^{-\varepsilon^2}}{4}
  +\sum_{j=3}^\infty \frac{\alpha e^{-(j-1)\varepsilon^2}}{2^j}\\
  &>1-e^{-\varepsilon^2}+e^{-\varepsilon^2}\left(
    v^\varepsilon(y_1)+\frac{\alpha}{4}
  \right)
  +\frac{\alpha e^{-2\varepsilon^2}}{8}+\sum_{j=4}^\infty 
  \frac{\alpha e^{-(j-1)\varepsilon^2}}{2^j}\\
  &>1-e^{-2\varepsilon^2}+e^{-2\varepsilon^2}
  \left(v^\varepsilon (y_2)+\frac{\alpha}{8}\right)
  +\frac{\alpha e^{-3\varepsilon^2}}{16}
  +\sum_{j=5}^\infty \frac{\alpha e^{-(j-1)\varepsilon^2}}{2^j}\\
  &>\cdots > 1-e^{-N\varepsilon^2} + 
  e^{-N\varepsilon^2}\left(\psi(t)+\frac{\alpha}{2^{N+1}}\right)
  >\psi(N\varepsilon^2 + t) >\psi(t).
\end{align*}
It is a contradiction.

\vspace{\baselineskip}
Next, we define: 
for each $\varepsilon$ and $x'$ with (\ref{eq:comparison-game1-pf1}),
we say the sequence 
$\{(\mathbf{v}_j,\mathbf{w}_j,\mathbf{b}_j)\}_{j=1}^N\subset \mathcal{D}\times\mathcal{S}$
$(N\in\N\cup\{\infty\})$ \textit{follows Player II's strategy} $S^{\text{II}}_u$ 
if for each $1\leq j<N$, we have 
\begin{align*}
  &y_j\in \R^n \setminus \overline{D}_0,\quad 
  \widehat{u}^\varepsilon(y_{j-1}+\delta^\varepsilon(\mathbf{v}_j,\mathbf{w}_j,\mathbf{b}_j)) 
  =\max_{\mathbf{b}\in\mathcal{S}}
  \widehat{u}^\varepsilon (y_{j-1} +\delta^\varepsilon(
    \mathbf{v}_{j},\mathbf{w}_{j},\mathbf{b})),\\
  &\text{and } y_N\in \overline{D}_0 \text{ if }N<\infty,
\end{align*}
where $\{y_j\}$ is defined by
\begin{align*}
  y_0=x',\quad y_{j}=y_{j-1} + \delta^\varepsilon (
    \mathbf{v}_{j},\mathbf{w}_{j},\mathbf{b}_{j}
  )
\end{align*}
and we set
\begin{align*}
  \widehat{u}^\varepsilon (y) 
  =\left\{
    \begin{aligned}
      &u^\varepsilon (y)
      &&\text{if}\ y\in \R^n\setminus \overline{D}_0,\\
      &g (y)
      &&\text{if}\ y\in \overline{D}_0.
    \end{aligned}
    \right.
\end{align*}

For each $\varepsilon$ and $x'$ with (\ref{eq:comparison-game1-pf1}),
we can construct a sequence $\{(\overline{\mathbf{v}}_j,\overline{\mathbf{w}}_j,\overline{\mathbf{b}}_j)\}$
which follows both $S^{\text{I}}_v$ and $S^{\text{II}}_u$ 
as below.

First, choose $(\overline{\mathbf{v}}_1,\overline{\mathbf{w}}_1)\in\mathcal{D}$ such that 
\begin{align*}
  v^\varepsilon(x')+\frac{\alpha}{2}
  >\max_{\mathbf{b}\in\mathcal{S}}\left\{ 1-e^{-\varepsilon^2}+
  e^{-\varepsilon^2}
  \widehat{v}^\varepsilon (x' +\delta^\varepsilon(
    \overline{\mathbf{v}}_{1},\overline{\mathbf{w}}_{1},\mathbf{b}
  ))\right\}
\end{align*}
and then, choose $\overline{\mathbf{b}}_1\in\mathcal{S}$ such that 
\begin{align*}
  \widehat{u}^\varepsilon (x' +\delta^\varepsilon(
    \overline{\mathbf{v}}_{1},\overline{\mathbf{w}}_{1},\overline{\mathbf{b}}_1
  ))
  =\max_{\mathbf{b}\in\mathcal{S}}
  \widehat{u}^\varepsilon (x' +\delta^\varepsilon(
    \overline{\mathbf{v}}_{1},\overline{\mathbf{w}}_{1},\mathbf{b})).
\end{align*}
Set $\overline{y}_1=x'+\delta^\varepsilon(\overline{\mathbf{v}}_{1},\overline{\mathbf{w}}_{1},\overline{\mathbf{b}}_1)$.
At this time, 
if $\overline{y}_1\not\in\widetilde{D}\setminus \overline{D}_0$, 
set $\overline{N}=1$.
If $\overline{y}_1\in\widetilde{D}\setminus \overline{D}_0$,
go to the next step.

For $j=1,2,\ldots$,
we iteratively choose $(\overline{\mathbf{v}}_j,\overline{\mathbf{w}}_j)\in\mathcal{D}$ such that 
\begin{align*}
  v^\varepsilon(\overline{y}_{j-1})+\frac{\alpha}{2^j}
  >\max_{\mathbf{b}\in\mathcal{S}}\left\{ 1-e^{-\varepsilon^2}+
  e^{-\varepsilon^2}
  \widehat{v}^\varepsilon (\overline{y}_{j-1} +\delta^\varepsilon(
    \overline{\mathbf{v}}_{j},\overline{\mathbf{w}}_{j},\mathbf{b}
  ))\right\}
\end{align*}
and choose $\overline{\mathbf{b}}_j\in\mathcal{S}$ such that 
\begin{align*}
  \widehat{u}^\varepsilon (\overline{y}_{j-1} +\delta^\varepsilon(
    \overline{\mathbf{v}}_{j},\overline{\mathbf{w}}_{j},\overline{\mathbf{b}}_j
  ))
  =\max_{\mathbf{b}\in\mathcal{S}}
  \widehat{u}^\varepsilon (\overline{y}_{j-1} +\delta^\varepsilon(
    \overline{\mathbf{v}}_{j},\overline{\mathbf{w}}_{j},\mathbf{b})),
\end{align*}
and then, set $\overline{y}_j=\overline{y}_{j-1}+\delta^\varepsilon(
\overline{\mathbf{v}}_{j},\overline{\mathbf{w}}_{j},\overline{\mathbf{b}}_j)$.
We set $\overline{N}=j$ if $\overline{y}_j\not\in\widetilde{D}\setminus\overline{D}_0$ occurs
for the first time.

The sequence $\{(\overline{\mathbf{v}}_j,\overline{\mathbf{w}}_j,\overline{\mathbf{b}}_j)\}_{j=1}^{\overline{N}}$
we can obtain by this method follows $S^{\text{I}}_v$ 
by its construction and we have that 
\begin{align*}
  &v^\varepsilon (x')+\alpha 
  > 1- e^{-j\varepsilon^2}+e^{-j\varepsilon^2}
  \left(v^\varepsilon(\overline{y}_j)+\frac{\alpha}{2^{j+1}}\right),\\
  &u^\varepsilon(x') \leq 1-e^{-j\varepsilon^2}
  +e^{-j\varepsilon^2} u^\varepsilon (\overline{y}_j)
  \quad (1\leq j <N).
\end{align*}
Moreover, $\{(\overline{\mathbf{v}}_j,\overline{\mathbf{w}}_j,\overline{\mathbf{b}}_j)\}_{j=1}^{\overline{N}}$
also follows $S^{\text{II}}_u$ since we have $\overline{y}_{\overline{N}}\in\overline{D}_0$ if $\overline{N}<\infty$ 
due to the above claim.
Thus, we obtain 
\begin{align}\label{eq:comparison-game1-pf2}
  u^\varepsilon (x')
  \leq 1-e^{-\overline{N}\varepsilon^2}
  +e^{-\overline{N}\varepsilon^2}g(\overline{y}_{\overline{N}})
  < v^\varepsilon (x')+\alpha
\end{align}
if $\overline{N}<\infty$, 
or else 
\begin{align}\label{eq:comparison-game1-pf3}
  u^\varepsilon(x')\leq 1\leq v^\varepsilon (x')+\alpha
\end{align} 
by letting $j\to\infty$ if $\overline{N}=\infty$.

Finally, we have
\begin{align*}
  \overline{U}(x)
  =\limsup_{\substack{
    \widetilde{D}\setminus\overline{D}_0\ni x'\to x\\
    \varepsilon\to 0
  }}\psi^{-1}(u^\varepsilon (x'))
  &\leq \limsup_{\substack{
    \widetilde{D}\setminus\overline{D}_0\ni x'\to x\\
    \varepsilon\to 0
  }}\psi^{-1}(v^\varepsilon (x')+\alpha)
  \leq\psi^{-1}\Bigg(
    \limsup_{\substack{
    \widetilde{D}\setminus\overline{D}_0\ni x'\to x\\
    \varepsilon\to 0
  }}v^\varepsilon (x')+\alpha
  \Bigg)\\
  &\leq \psi^{-1}(v(x)+\alpha)
  \leq V(x)-\log\left(1-\frac{\alpha}{1-v(x)}\right)
\end{align*}
by (\ref{eq:comparison-game1-pf2}), (\ref{eq:comparison-game1-pf3})
and (\ref{eq:comparison-game1-pf1}).
Since this inequality holds for arbitrarily small $\alpha>0$,
we conclude that $\overline{U}(x)\leq V(x)$,
and thus, this implies $\widetilde{D}\subset\{\overline{U}<t\}\cup D_0$.
\end{proof}

Finally, we prove Theorem 1.1(1).

\begin{proof}[Proof of Theorem $1.1(1)$]
Let $w(x)=\psi(W(x))$ for $x\in D\setminus D_0$ 
and $g(x)=\psi(G(x))$ for $x\in \partial D_0$.
Then, $w$ satisfies 
\begin{align*}
  \left\{
  \begin{aligned}
    w+F(Dw,D^2w)&\leq 1 &&\text{in } D\setminus \overline{D}_0,\\
    w &\leq g &&\text{on }\partial D_0
  \end{aligned}
  \right.
\end{align*}
in the sense of viscositysolutions and 
\begin{align}\label{eq:comparison-game2-pf1}
  w< \psi(t) \ \text{in } D\setminus \overline{D}_0,
  \quad w \to \psi(t)\ \text{as } x\to x_0 \text{ for all } x_0\in \partial D
\end{align}
in the usual sense.
For each small $\varepsilon >0$, 
let $w^\varepsilon\in \mathcal{B}(D\setminus\overline{D}_0)$ 
be the function defined by 
\begin{align*}
  w^\varepsilon (x)
  = \inf_{(\mathbf{v},\mathbf{w})\in \mathcal{D}}\max_{\mathbf{b}\in\mathcal{S}}
  \left\{
  \begin{aligned}
    &1-e^{-\varepsilon^2}+e^{-\varepsilon^2}
    w^\varepsilon (x+\delta^\varepsilon(\mathbf{v},\mathbf{w},\mathbf{b}))
    &&\text{if}\ x+\delta^\varepsilon(\mathbf{v},\mathbf{w},\mathbf{b}) 
    \in D\setminus \overline{D}_0,\\
    &1-e^{-\varepsilon^2}+e^{-\varepsilon^2}
    g (x+\delta^\varepsilon(\mathbf{v},\mathbf{w},\mathbf{b}))
    &&\text{if}\ x+\delta^\varepsilon(\mathbf{v},\mathbf{w},\mathbf{b})
    \in \overline{D}_0,\\
    &1-e^{-\varepsilon^2}+e^{-\varepsilon^2}\psi(t)
    &&\text{if}\ x+\delta^\varepsilon(\mathbf{v},\mathbf{w},\mathbf{b})
    \in \R^n\setminus D
  \end{aligned}
  \right.
\end{align*}
for a continuous extention of $g$ to 
a neighborhood of $\partial D_0$.
Similarly to (\ref{eq:comparison-game1-pf4}) in the proof of previous theorem,
we have 
\begin{align*}
  w(x)\leq \liminf_{\substack{
    \widetilde{D}\setminus \overline{D}_0\ni y\to x\\
    \varepsilon\to 0}} w^\varepsilon (y)
  \quad \text{for all } x\in D\setminus D_0.
\end{align*}

First, let us show that $D\supset\{\underline{U}<t\}$.
If $D=\R^n$, it is obvious, 
and therefore we only need to consider the case $\partial D\neq \emptyset$.
Assume by contradiction that 
there exists $x_0\in\{\underline{U}<t\}\setminus D$.
Then, there exist positive constants $\alpha,r >0$ such that 
\begin{align}\label{eq:comparison-game2-pf5}
\left\{
\begin{aligned}
  &\psi (t) - \alpha < w < \psi (t)\text{ in }
  D\cap \{x \mid d(x,\partial D)<r\},\\
  &\underline{u}(x_0)<\psi(t)-4\alpha,\\
  &\{x \mid d(x,D_0)<r\}\subset D
\end{aligned}
\right.
\end{align}
by (\ref{eq:comparison-game2-pf1}) and $\underline{U}(x_0)<t$.
Moreover, there are infinitely many $(\varepsilon,x')$ satisfying 
\begin{align}\label{eq:comparison-game2-pf2}
  u^\varepsilon (x')<\underline{u}(x_0)+\alpha
\end{align} 
on arbitrary neighborhood of $(0,x_0)\in[0,\infty)\times\R^n$
by the definition of $\underline{u}$.

We say that,
for each $(\varepsilon,x')$ satisfying 
(\ref{eq:comparison-game2-pf2}),
the sequence 
$\{(\mathbf{v}_j,\mathbf{w}_j,\mathbf{b}_j)\}_{j=1}^N\subset \mathcal{D}\times\mathcal{S}$
$(N\in\N\cup\{\infty\})$ \textit{follows Player I's strategy} $S^{\text{I}}_u$ 
if for each $1\leq j<N$, we have 
\begin{align*}
  &y_j\in \R^n \setminus \overline{D}_0,\quad 
  u^\varepsilon(y_{j-1})+\frac{\alpha}{2^{j}}
  >\max_{\mathbf{b}\in\mathcal{S}}\left\{ 1-e^{-\varepsilon^2}+
  e^{-\varepsilon^2}
  \widehat{u}^\varepsilon (y_{j-1} +\delta^\varepsilon(
    \mathbf{v}_{j},\mathbf{w}_{j},\mathbf{b}
  ))\right\},\\
  &\text{and } y_N\in \overline{D}_0 
  \text{ if }N<\infty,
\end{align*}
where $\{y_j\}$ is defined by
\begin{align*}
  y_0=x',\quad y_{j}=y_{j-1} + \delta^\varepsilon (
    \mathbf{v}_{j},\mathbf{w}_{j},\mathbf{b}_{j}
  )
\end{align*}
and we set
\begin{align*}
  \widehat{u}^\varepsilon (y) 
  =\left\{
    \begin{aligned}
      &u^\varepsilon (y)
      &&\text{if}\ y\in \R^n \setminus \overline{D}_0,\\
      &g (y)
      &&\text{if}\ y\in \overline{D}_0.
    \end{aligned}
    \right.
\end{align*}
If $\{(\mathbf{v}_j,\mathbf{w}_j,\mathbf{b}_j)\}_{j=1}^N$
follows $S^{\text{I}}_u$, we have $N<\infty$ since
\begin{align}\label{eq:comparison-game2-pf7}
  1-e^{-j\varepsilon^2} + e^{-j\varepsilon^2}
  \left(u^\varepsilon(y_j)+\frac{\alpha}{2^{j+1}}\right)
  <u^\varepsilon(x')+\alpha
  <\underline{u}(x_0)+2\alpha<\psi(t)-2\alpha <1
\end{align}
holds for each $1\leq j <N$;
otherwise we have $1<\psi(t)-2\alpha$ by letting $j\to\infty$,
which is a contradiction.
Therefore, there is an integer 
$\tau=\tau(x',\varepsilon,\{(\mathbf{v}_j,\mathbf{w}_j,\mathbf{b}_j)\}_{j=1}^N)$
determined by
\begin{align}\label{eq:comparison-game2-pf3}
  \tau =\max\{j\in\N \mid d(y_j,\partial D)<r,\ y_k\in D\ \text{for } k\geq j\}.
\end{align}
\textbf{Claim.}
For each $(\varepsilon,x')$ with (\ref{eq:comparison-game2-pf2}),
there exists a sequence 
$\{(\mathbf{v}^0_j,\mathbf{w}^0_j,\mathbf{b}^0_j)\}_{j=1}^{N_0}$
which follows Player I's strategy $S^{\text{I}}_u$ 
satisfying the following property.
\begin{align}\label{eq:comparison-game2-pf4}
  &\text{Let } 
  \{y^0_j\} \text{ be the trajectory determined by }
  \{(\mathbf{v}^0_j,\mathbf{w}^0_j,\mathbf{b}^0_j)\}_{j=1}^{N_0}
  \text{ with } y^0_0=x'\notag\\ 
  &\text{and }
  \tau_0 \text{ be the integer defined by } (\ref{eq:comparison-game2-pf3})
  \text{ for } \{(\mathbf{v}^0_j,\mathbf{w}^0_j,\mathbf{b}^0_j)\}_{j=1}^{N_0}.
  \text{ Then, for all}\notag\\
  &\{(\widetilde{\mathbf{v}}_j,
  \widetilde{\mathbf{w}}_j,
  \widetilde{\mathbf{b}}_j)\}_{j=1}^{\widetilde{N}}
  \text{ with }\notag\\
  &\left\{
  \begin{aligned}
  &\widetilde{y}_j\in D\setminus \overline{D}_0,\\
  &u^\varepsilon(\widetilde{y}_{j-1})+\frac{\alpha}{2^{j+\tau_0}}
  >\max_{\mathbf{b}\in\mathcal{S}}\left\{ 1-e^{-\varepsilon^2}+
  e^{-\varepsilon^2}
  \widehat{u}^\varepsilon (\widetilde{y}_{j-1} 
  +\delta^\varepsilon(
    \widetilde{\mathbf{v}}_{j},\widetilde{\mathbf{w}}_{j},
    \mathbf{b}
  ))\right\},\\
  &\widehat{w}^\varepsilon (\widetilde{y}_{j-1}+\delta^\varepsilon
  (\widetilde{\mathbf{v}}_j,
  \widetilde{\mathbf{w}}_j,
  \widetilde{\mathbf{b}}_j))
  =\max_{\mathbf{b}\in\mathcal{S}}
  \widehat{w}^\varepsilon (\widetilde{y}_{j-1} 
  +\delta^\varepsilon(
    \widetilde{\mathbf{v}}_{j},\widetilde{\mathbf{w}}_{j},
    \mathbf{b}))
    \text{ for } 1\leq j < \widetilde{N},\\
  &\widetilde{y}_{\widetilde{N}}\not \in D\setminus\overline{D}_0 
  \text{ if }\widetilde{N}<\infty,\\
  &\widetilde{y}_0 := y^0_{\tau_0},\ 
  \widetilde{y}_j := \widetilde{y}_{j-1}+\delta^\varepsilon 
  (\widetilde{\mathbf{v}}_j,
  \widetilde{\mathbf{w}}_j,
  \widetilde{\mathbf{b}}_j),
  \end{aligned}
  \right.\\
  &\text{we have } \widetilde{N}<\infty \text{ and } 
  \widetilde{y}_{\widetilde{N}}\in \overline{D}_0.
  \text{ Here, }\widehat{w}^\varepsilon \text{ is defined by}\notag\\
  &\widehat{w}^\varepsilon (y) 
  =\left\{
    \begin{aligned}
      &w^\varepsilon (y)
      &&\text{if}\ y\in D\setminus \overline{D}_0,\\
      &g (y)
      &&\text{if}\ y\in \overline{D}_0,\\
      &\psi(t)
      &&\text{if}\ y\in \R^n\setminus D.
    \end{aligned}
  \right.\notag
\end{align}
\textit{Proof of Claim.}
We argue by contradiction.
Fix $(\varepsilon,x')$ with (\ref{eq:comparison-game2-pf2}) and
assume that for all $\{(\mathbf{v}_j,\mathbf{w}_j,\mathbf{b}_j)\}_{j=1}^{N}$
following strategy $S^{\text{I}}_u$,
there exists a sequence $\{(\widetilde{\mathbf{v}}_j,
\widetilde{\mathbf{w}}_j,
\widetilde{\mathbf{b}}_j)\}_{j=1}^{\widetilde{N}}$
satisfying (\ref{eq:comparison-game2-pf4}) and 
that 
$\widetilde{N}<\infty,\ \widetilde{y}_{\widetilde{N}}\in \R^n\setminus D$
or $\widetilde{N}=\infty$.
We can get a contradiction by configuring a sequence in the following way.

First, fix a sequence $\{(\mathbf{v}^1_j,\mathbf{w}^1_j,\mathbf{b}^1_j)\}_{j=1}^{N_1}$
with strategy $S^{\text{I}}_u$, and then, we get a sequence $\{(\widetilde{\mathbf{v}}_j,
\widetilde{\mathbf{w}}_j,
\widetilde{\mathbf{b}}_j)\}_{j=1}^{\widetilde{N}}$
which satisfies (\ref{eq:comparison-game2-pf4}) and that 
$\widetilde{N}<\infty,\ \widetilde{y}_{\widetilde{N}}\in \R^n\setminus D$
or $\widetilde{N}=\infty$ by the assumption.
Let
\begin{align*}
  &(\mathbf{v}^2_j,\mathbf{w}^2_j,\mathbf{b}^2_j)
  =\left\{
  \begin{aligned}
    &(\mathbf{v}^1_j,\mathbf{w}^1_j,\mathbf{b}^1_j) && (j\leq\tau_1),\\
    &(\widetilde{\mathbf{v}}_{j-\tau_1},
    \widetilde{\mathbf{w}}_{j-\tau_1},
    \widetilde{\mathbf{b}}_{j-\tau_1}) && (\tau_1+1\leq j<\tau_1+1+\widetilde{N}),
  \end{aligned}
  \right.\\
  &y^2_0=x',\ y^2_j= y^2_{j-1}+\delta^\varepsilon (\mathbf{v}^2_j,\mathbf{w}^2_j,\mathbf{b}^2_j)
\end{align*}
where
$\tau_1=\max\{j\in\N \mid d(y^1_j,\partial D)<r, y^1_k\in D \text{ for } k\geq j\}$
and $y^1_0 = x'$, $y^1_j = y^1_{j-1}+\delta^\varepsilon (\mathbf{v}^1_j,\mathbf{w}^1_j,\mathbf{b}^1_j)$.
Here, if $\widetilde{N}=\infty$, 
then $\{(\mathbf{v}^2_j,\mathbf{w}^2_j,\mathbf{b}^2_j)\}_{j=1}^{\infty}$
follows Player I's strategy $S^{\text{I}}_u$.
However, it implies $\widetilde{N}<\infty$ and 
$\widetilde{y}_{\widetilde{N}}=y^2_{\tau_1+\widetilde{N}}\in \overline{D}_0$ 
by (\ref{eq:comparison-game2-pf7}), which contradicts (\ref{eq:comparison-game2-pf4}).
Thus, we have $\widetilde{N}<\infty$ and $\widetilde{y}_{\widetilde{N}}\in\R^n\setminus D$.
Then, we extend the sequence $\{(\mathbf{v}^2_j,\mathbf{w}^2_j,\mathbf{b}^2_j)\}_{j=1}^{\tau_1+\widetilde{N}}$ 
so that it follows $S^{\text{I}}_u$ by choosing 
$(\mathbf{v}^2_j,\mathbf{w}^2_j)\in\mathcal{D}$ such that 
\begin{align*}
  u^\varepsilon(y_{j-1})+\frac{\alpha}{2^{j}}
  >\max_{\mathbf{b}\in\mathcal{S}}\left\{ 1-e^{-\varepsilon^2}+
  e^{-\varepsilon^2}
  \widehat{u}^\varepsilon (y_{j-1} +\delta^\varepsilon(
    \mathbf{v}_{j},\mathbf{w}_{j},\mathbf{b}
  ))\right\}
\end{align*}
and $\mathbf{b}^2_j\in\mathcal{S}$ arbitrarily 
for $j\geq\tau_1+\widetilde{N}+1$ 
until its trajectory $y^2_j=y^2_{j-1}+\delta^\varepsilon 
(\mathbf{v}^2_j,\mathbf{w}^2_j,\mathbf{b}^2_j)$ reaches $\overline{D}_0$.
By (\ref{eq:comparison-game2-pf7}), 
there is an integer $\tau_2$ determined by 
$\tau_2=\max\{j\in\N \mid d(y^2_j,\partial D)<r, y^2_k\in D \text{ for } k\geq j\}$
and $\tau_1 < \tau_1 + \widetilde{N} < \tau_2$ holds 
since
$y^2_{\tau_1+\widetilde{N}}=\tilde{y}_{\tilde{N}}\in\R^n\setminus D$.

For each $l=2,3,\ldots$ and 
$\{(\mathbf{v}^{l}_j,\mathbf{w}^{l}_j,\mathbf{b}^{l}_j)\}_{j=1}^{N_{l}}$, 
we get a sequence $\{(\widetilde{\mathbf{v}}_j,\widetilde{\mathbf{w}}_j,
\widetilde{\mathbf{b}}_j)\}_{j=1}^{\widetilde{N}}$ 
satisfying (\ref{eq:comparison-game2-pf4}) and that
$\widetilde{N}<\infty,\ \widetilde{y}_{\widetilde{N}}\in \R^n\setminus D$ 
or $\widetilde{N}=\infty$
by the assumption, 
and then, let 
\begin{align*}
  &(\mathbf{v}^{l+1}_j,\mathbf{w}^{l+1}_j,\mathbf{b}^{l+1}_j)
  =\left\{
  \begin{aligned}
    &(\mathbf{v}^l_j,\mathbf{w}^l_j,\mathbf{b}^l_j) && (j\leq\tau_l),\\
    &(\widetilde{\mathbf{v}}_{j-\tau_l},
    \widetilde{\mathbf{w}}_{j-\tau_l},
    \widetilde{\mathbf{b}}_{j-\tau_l}) && (\tau_l+1\leq j<\tau_l+1+\widetilde{N}),
  \end{aligned}
  \right.\\
  &y^{l+1}_0=x',\ y^{l+1}_j= y^{l+1}_{j-1}+\delta^\varepsilon 
  (\mathbf{v}^{l+1}_j,\mathbf{w}^{l+1}_j,\mathbf{b}^{l+1}_j).
\end{align*}
By the same argument of the step of $l=1$, 
we have $\widetilde{N}<\infty,\ \widetilde{y}_{\widetilde{N}}\in \R^n\setminus D$
and we can extend 
$\{(\mathbf{v}^{l+1}_j,\mathbf{w}^{l+1}_j,\mathbf{b}^{l+1}_j)\}$
so that it follows  $S^{\text{I}}_u$
until its trajectory $y^{l+1}_j$ reaches $\overline{D}_0$.
By (\ref{eq:comparison-game2-pf7}) again,
there exists an integer 
$\tau_{l+1}:=\max\{j\in\N \mid d(y^{l+1}_j,\partial D)<r, y^{l+1}_k\in D \text{ for } k\geq j\}$
and it satisfies $\tau_l < \tau_{l+1}$.

For all sequences 
$\{(\mathbf{v}^{l}_j,\mathbf{w}^{l}_j,\mathbf{b}^{l}_j)\}_{j=1}^{N_{l}}$
determined inductively as above,
we set 
\begin{align*}
  &(\mathbf{v}^\infty_j, \mathbf{w}^\infty_j,\mathbf{b}^\infty_j)
  =\left\{
  \begin{aligned}
    &(\mathbf{v}^1_j,\mathbf{w}^1_j,\mathbf{b}^1_j)
    &&\text{if } 1\leq j<\tau_{1},\\
    &(\mathbf{v}^{l+1}_j,\mathbf{w}^{l+1}_j,\mathbf{b}^{l+1}_j)
    &&\text{if } \tau_l\leq j<\tau_{l+1},
  \end{aligned}
  \right.\\
  &y^\infty_0 =x',\quad
  y^\infty_j = y^\infty_{j-1}+\delta^\varepsilon (
    \mathbf{v}^\infty_j, \mathbf{w}^\infty_j,\mathbf{b}^\infty_j).
\end{align*}
Then, $\{(\mathbf{v}^{\infty}_j,\mathbf{w}^{\infty}_j,\mathbf{b}^{\infty}_j)\}_{j=1}^{\infty}$
follows $S^{\text{I}}_u$
but its trajectory $y^\infty_j$ cannot 
reach $\overline{D}_0$ in a finite number of steps.
It contradicts (\ref{eq:comparison-game2-pf7}).
Thus, we conclude that the claim holds.
\vspace{\baselineskip}

For each $(\varepsilon,x')$ with (\ref{eq:comparison-game2-pf2}),
we construct a sequence
$\{(\overline{\mathbf{v}}_j,\overline{\mathbf{w}}_j,
\overline{\mathbf{b}}_j)\}_{j=1}^{\overline{N}}$
as below:
take $\{(\mathbf{v}^0_j,\mathbf{w}^0_j,\mathbf{b}^0_j)\}_{j=1}^{N_0}$
satisfying the above claim and let 
\begin{align*}
  (\overline{\mathbf{v}}_j,\overline{\mathbf{w}}_j,
\overline{\mathbf{b}}_j)=
(\mathbf{v}^0_j,\mathbf{w}^0_j,\mathbf{b}^0_j)
\end{align*}
and set $\overline{y}_j=y^0_j$ for $1\leq j \leq \tau_0-1$.
For $j\geq \tau_0$, choose $(\overline{\mathbf{v}}_j,\overline{\mathbf{w}}_j)\in\mathcal{D}$ such that 
\begin{align*}
  u^\varepsilon(\overline{y}_{j-1})+\frac{\alpha}{2^{j}}
  >\max_{\mathbf{b}\in\mathcal{S}}\left\{ 1-e^{-\varepsilon^2}+
  e^{-\varepsilon^2}
  \widehat{u}^\varepsilon (\overline{y}_{j-1} +\delta^\varepsilon(
    \overline{\mathbf{v}}_{j},\overline{\mathbf{w}}_{j},\mathbf{b}
  ))\right\}
\end{align*}
and $\overline{\mathbf{b}}_j\in\mathcal{S}$ such that 
\begin{align*}
  \widehat{w}^\varepsilon (\overline{y}_{j-1} +\delta^\varepsilon(
    \overline{\mathbf{v}}_{j},\overline{\mathbf{w}}_{j},\overline{\mathbf{b}}_j
  ))
  =\max_{\mathbf{b}\in\mathcal{S}}
  \widehat{w}^\varepsilon (\overline{y}_{j-1} +\delta^\varepsilon(
    \overline{\mathbf{v}}_{j},\overline{\mathbf{w}}_{j},\mathbf{b})),
\end{align*}
and set $\overline{y}_j=\overline{y}_{j-1}+\delta^\varepsilon(
\overline{\mathbf{v}}_{j},\overline{\mathbf{w}}_{j},\overline{\mathbf{b}}_j)$ iteratively.
We set $\overline{N}$ as the smallest index $j$ such that $\overline{y}_j\not \in D\setminus \overline{D}_0$ and $j>\tau_0$.

Since the sequence $(\widetilde{\mathbf{v}}_j,
\widetilde{\mathbf{w}}_j,
\widetilde{\mathbf{b}}_j)
:=(\overline{\mathbf{v}}_{j+\tau_0},\overline{\mathbf{w}}_{j+\tau_0},
\overline{\mathbf{b}}_{j+\tau_0})$
satisfies (\ref{eq:comparison-game2-pf4}), 
we have that $\overline{N}<\infty$ and 
$\overline{y}_{\overline{N}}\in \overline{D}_0$ by the above claim.
Therefore, we can calculate 
\begin{align}\label{eq:comparison-game2-pf6}
  w^\varepsilon (y^0_{\tau_0})=
  w^\varepsilon (\overline{y}_{\tau_0})
  &\leq 1-e^{-\varepsilon^2} + e^{-\varepsilon^2} w^\varepsilon (\overline{y}_{\tau_0+1})\\
  &\leq \cdots \leq 1-e^{-\overline{N}\varepsilon^2}
  +e^{-\overline{N}\varepsilon^2} g(\overline{y}_{\overline{N}})\notag\\
  &= 1-e^{-(\overline{N}-1)\varepsilon^2}+e^{-(\overline{N}-1)\varepsilon^2}
  \left(1-e^{-\varepsilon^2} + e^{-\varepsilon^2} g(\overline{y}_{\overline{N}})\right)\notag\\
  &< 1-e^{-(\overline{N}-1)\varepsilon^2}+e^{-(\overline{N}-1)\varepsilon^2}
  u^\varepsilon (\overline{y}_{\overline{N}-1})
  +\frac{\alpha e^{-(\overline{N}-1)\varepsilon^2}}{2^{\overline{N}}}\notag\\
  &<\cdots < 1- e^{-\tau_0 \varepsilon^2} +e^{-\tau_0 \varepsilon^2}
  u^\varepsilon (\overline{y}_{\tau_0}) + \sum_{j=\tau_0}^{\overline{N}-1}
  \frac{\alpha e^{-j\varepsilon^2}}{2^{j+1}}\notag\\
  &<\cdots < 1-e^{-\varepsilon^2} + e^{-\varepsilon^2}
  u^\varepsilon (\overline{y}_1)+\sum_{j=1}^{{\overline{N}-1}}
  \frac{\alpha e^{-j\varepsilon^2}}{2^{j+1}}\notag\\
  &< u^\varepsilon (\overline{y}_0)+\sum_{j=0}^{\overline{N}-1}
  \frac{\alpha e^{-j\varepsilon^2}}{2^{j+1}}
  <u^\varepsilon (x') + \alpha.\notag
\end{align}
Thus, letting $(\varepsilon , x')\to (0,x_0)$ 
while $(\varepsilon,x')$ satisfying (\ref{eq:comparison-game2-pf2}),
we have $u^\varepsilon (x')\to \underline{u}(x_0)$ and 
$y^0_{\tau_0}\to \exists y_0\in \overline{D \cap\{d(\cdot,\partial D)< r\}}$ 
up to subsequences and 
\begin{align*}
  \psi(t)-\alpha &\leq w(y_0)
  \leq \liminf_{\substack{\varepsilon'\to 0\\ y'\to y_0}} w^{\varepsilon'} (y')
  \leq \liminf_{\substack{\varepsilon\to 0\\ x'\to x_0}} w^\varepsilon (y^0_{\tau_0})\\
  &\leq \lim_{\substack{\varepsilon\to 0\\ x'\to x_0}}
  u^{\varepsilon}(x')+\alpha
  =\underline{u}(x_0)+\alpha < \psi(t)-2\alpha
\end{align*}
by (\ref{eq:comparison-game2-pf5}), (\ref{eq:comparison-game2-pf6})
and Theorem \ref{thm:comparison-Dirichlet}.
This is a contradiction.

Finally, we will show that $W\leq \underline{U}$ on $\{\underline{U}<t\}\setminus D_0$.
Fix $x\in \{\underline{U}<t\}\setminus D_0$ and arbitrarily small $\alpha >0$
and take sequences $\{\varepsilon_k\}$ and $\{x_k\}$ such that 
$\varepsilon_k\to 0$, $x_k\to x$ and $u^{\varepsilon_k}(x_k)\to \underline{u}(x)$.
Then, we can show that 
\begin{align*}
  w^{\varepsilon_k} (x_k) \leq u^{\varepsilon_k}(x_k)+\alpha 
\end{align*}
for all $k$ by the same argument of the proof of (\ref{eq:comparison-game1-pf2}) in the previous theorem
since we have already known that $x\in \{\underline{U}<t\}\subset D$.
Thus, it follows that 
\begin{align*}
  \underline{u}(x)+\alpha = \lim_{k\to\infty} u^{\varepsilon_k}(x_k)+\alpha
  \geq \liminf_{k\to\infty } w^{\varepsilon_k}(x_k)
  \geq \liminf_{\substack{\varepsilon\to 0\\ y\to x}}w^\varepsilon (y)
  \geq w(x)
\end{align*}
by Theorem \ref{thm:comparison-Dirichlet}
and this implies $\underline{U}(x)\geq W(x)$.
\end{proof}

\subsection{Applications}

In this subsection, we consider only the case that $G=0$ for (\ref{FBP})
and assume that the function $c$ satisfies
\begin{align}\label{assum:c-positive}
  c(\mathbf{n})>0 \quad\text{for all } \mathbf{n}\in\partial B(0,1)
\end{align}
and $D_0$ is a bounded domain.
In this case, 
following Proposition \ref{prop:evaluating-above} gives a sufficient condition
on the domain $D_0$ 
for the functions $\overline{U}$ and $\underline{U}$ to have a ``meaningful'' values.
Otherwise, in general, $\overline{U}$ and $\underline{U}$ 
can be identically $+\infty$.

\begin{prop}\label{prop:evaluating-above}
  Assume $(\mathrm{A}\ref{assum:homogenity})$-$(\mathrm{A}\ref{assum:rank})$ and $(\ref{assum:c-positive})$.
  Then, there exists $R=R(\sigma,c)>0$ such that 
  for any open set $D_0$ with $D_0\supset B(0,R)$,
  we have $\overline{U}(x),\ \underline{U}(x)<\infty$ for all $x\in \R^n\setminus D_0$.
\end{prop}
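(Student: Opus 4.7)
The plan is to exhibit an explicit strategy for Player I that forces the marker into $B(0,R)\subset D_0$ within $O(|x|^2/\varepsilon^2)$ steps regardless of Player II's response, for a radius $R$ depending only on $\sigma$ and $c$. Since $G\equiv 0$, this will give a uniform bound $U^\varepsilon(x)\leq |x|^2+\varepsilon^2$ for small $\varepsilon$, from which $\overline{U}(x),\underline{U}(x)\leq |x|^2<\infty$ follows by taking the half-relaxed limits.

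Set $M:=\max_{|v|=1}\lVert\sigma(v)\rVert_{\mathrm{op}}^2$, finite by (A\ref{assum:Lipschitz-conti}), and $c_{\min}:=\min_{|v|=1}c(v)>0$ by (\ref{assum:c-positive}); take $R:=(2Mm+1)/c_{\min}$. At a position $y\in\R^n\setminus\overline{D}_0$ (so $|y|>R$), Player I selects
\[
\hat{v}^1=\frac{y}{|y|},\qquad \hat{v}^2=-\frac{y}{|y|},\qquad \hat{\mathbf{w}}\in\mathcal{D}_2\ \text{arbitrary}.
\]
The key observation is that by (A\ref{assum:rank}) each $\sigma(\hat{v}^1)w^i$ lies in $\langle \hat{v}^1\rangle^\perp$, so the ``curvature part'' of $\delta^\varepsilon$ is orthogonal to $y$ and the entire radial motion is controlled by Player I. Setting $S:=\sum_i b^i\sigma(\hat{v}^1)w^i$, one has $\langle y,S\rangle=0$ and $|S|^2\leq Mm$ since $|\sum_i b^i w^i|^2=m$; a direct expansion then yields, for every $\mathbf{b}\in\mathcal{S}$,
\[
|y+\delta^\varepsilon(\hat{\mathbf{v}},\hat{\mathbf{w}},\mathbf{b})|^2 = |y|^2 - 2\varepsilon^2 c(\hat{v}^1)|y| + 2\varepsilon^2 |S|^2 + \varepsilon^4 c(\hat{v}^1)^2,
\]
which is bounded above by $|y|^2-\varepsilon^2$ whenever $|y|\geq R$ and $\varepsilon$ is sufficiently small (depending only on $\sigma,c$).

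Starting from $y_0=x$ and iterating this strategy, $|y_j|^2\leq |x|^2 - j\varepsilon^2$ as long as the game has not terminated, so termination must occur by step $N\leq\lceil |x|^2/\varepsilon^2\rceil$ along every trajectory induced by Player II's choices. Iterating the dynamic programming principle~(\ref{DPP-U}) --- formalized as in Proposition~\ref{consistency-with-games} by letting $(\hat{\mathbf{v}}_j,\hat{\mathbf{w}}_j)$ depend only on $y_{j-1}$ and telescoping the resulting chain of inequalities --- gives $u^\varepsilon(x)\leq 1-e^{-N\varepsilon^2}$, using $g=\psi\circ G\equiv 0$ on $\partial D_0$; hence $U^\varepsilon(x)\leq N\varepsilon^2\leq |x|^2+\varepsilon^2$. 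Passing to $\limsup$ and $\liminf$ along $y\to x$, $\varepsilon\to 0$ yields the desired finiteness.

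The main obstacle is ensuring that Player II's adversarial, time-varying response $\mathbf{b}_j$ --- chosen \emph{after} Player I at each turn --- cannot defeat the radial drift. This is handled by (A\ref{assum:rank}), which forces $\mathop{\text{Im}}\sigma(\hat{v}^1)\subset \langle y\rangle^\perp$ and thereby decouples Player II's choice entirely from the radial coordinate; Player II's contribution to $|y+\delta|^2$ is only the $O(\varepsilon^2)$ term $2\varepsilon^2|S|^2$, which is absorbed by choosing $R$ large relative to $Mm/c_{\min}$.
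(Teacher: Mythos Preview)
Your proof is correct and follows essentially the same approach as the paper: Player~I plays the radial strategy $v^1=\pm y/|y|$, $v^2=-y/|y|$, and (A\ref{assum:rank}) forces Player~II's contribution to be orthogonal to $y$, yielding a uniform decrease of $|y_j|$ (or $|y_j|^2$) per step regardless of $\mathbf{b}$. The only cosmetic difference is that the paper takes a square root to extract the linear estimate $\overline{U}(x)\leq \tfrac{2}{c_0}|x|-\tfrac{4C_0^2}{c_0^2}$ instead of your quadratic bound $\overline{U}(x)\leq |x|^2$, which is immaterial for the finiteness claim.
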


\begin{proof}
Let $c_0:=\min_{|\mathbf{n}|=1}|c(\mathbf{n})|$ and  
$C_0:=\max_{|\mathbf{n}|=1}\lVert\sigma(\mathbf{n})\rVert$.
Set
\begin{align*}
  R:=\frac{2C_0^2}{c_0}+1
\end{align*}
and assume $D_0\supset B(0,R)$.
For any $x\in\R^n\setminus \overline{D}_0$ and any small $\varepsilon>0$,
we consider the sequence $\{(\mathbf{v}_j,\mathbf{w}_j,\mathbf{b}_j)\}_{j=1}^N$
constructed as follows:
let $y_0=x$. For each $j=1,2,\ldots$, we define
\begin{align}\label{eq:consentric}
  \mathbf{v}_j:=\left(-\frac{y_{j-1}}{|y_{j-1}|},-\frac{y_{j-1}}{|y_{j-1}|}\right)\in \R^n\times \R^n,\
  \mathbf{w}_j:=\left(e_1,\ldots,e_m\right)\in\R^{m\times m},
\end{align}
where $\{e_1,\ldots,e_m\}$ is a canonical basis of $\R^m$. 
For these $\mathbf{v}_j$ and $\mathbf{w}_j$,
choose $\mathbf{b}_j\in\mathcal{S}$ so that it satisfies
\begin{align*}
  u^\varepsilon (y_{j-1}+\delta^\varepsilon(\mathbf{v}_j,\mathbf{w}_j,\mathbf{b}_j))
  =\max_{\mathbf{b}\in\mathcal{S}}
  \left\{
  \begin{aligned}
    &u^\varepsilon (y_{j-1}+\delta^\varepsilon(\mathbf{v}_j,\mathbf{w}_j,\mathbf{b}))
    &&\text{if }\delta^\varepsilon(\mathbf{v}_j,\mathbf{w}_j,\mathbf{b})\in \R^n\setminus\overline{D}_0,\\
    &0
    &&\text{if }\delta^\varepsilon(\mathbf{v}_j,\mathbf{w}_j,\mathbf{b})\in \overline{D}_0
  \end{aligned}
  \right.
\end{align*}
and let $y_j:=y_{j-1}+\delta^\varepsilon (\mathbf{v}_j,\mathbf{w}_j,\mathbf{b}_j)$.
We define $N$ as the smallest index $j$ such that $y_j\in\overline{D}_0$.
If $y_j\in\R^n\setminus\overline{D}_0$ always holds, we set $N=\infty$.

Then, by (\ref{eq:consentric}) and (A\ref{assum:rank}), 
for each $1\leq j <N$, we have
\begin{align*}
  |y_j|^2
  &=\left|y_{j-1}+\sqrt{2}\varepsilon\sum_{i=1}^m b_j^i\sigma (y_{j-1})e_i 
  -\varepsilon^2 c\left(\frac{y_{j-1}}{|y_{j-1}|}\right)\frac{y_{j-1}}{|y_{j-1}|}\right|^2\\
  &=\left||y_{j-1}|-\varepsilon^2c\left(\frac{y_{j-1}}{|y_{j-1}|}\right)\right|^2
  +2\varepsilon^2 \left|\sum_{i=1}^m b_j^i\sigma(y_{j-1})e_i\right|^2\\
  &\leq \left||y_{j-1}|-\varepsilon^2 c_0\right|^2 + 2\varepsilon^2 C_0^2.
\end{align*}
Since it holds that
\begin{align*}
  r\geq R = \frac{2C_0^2}{c_0}+1\quad \Rightarrow\quad
  \left|r-\varepsilon^2 c_0\right|^2 + 2\varepsilon^2 C_0^2\leq \left(r- \frac{\varepsilon^2 c_0}{2}\right)^2
\end{align*}
for small $\varepsilon>0$ and 
$y_j\not\in B(0,R)$ for each $1\leq j<N$, we have
\begin{align}\label{eq:capturing-step}
  |y_j|\leq \sqrt{\left||y_{j-1}|-\varepsilon^2 c_0\right|^2 + 2\varepsilon^2 C_0^2}
  \leq |y_{j-1}|-\frac{\varepsilon^2c_0}{2}
  \quad \text{for } 1\leq j <N.
\end{align}
Inequality (\ref{eq:capturing-step}) implies that 
the trajectory $y_j$ can reach $\overline{D}_0$ at most 
$\left[\left(|x|-R\right)/\frac{\varepsilon^2c_0}{2}\right]$ steps.
Therefore, the dynamic programming principle for $U^\varepsilon$ implies
\begin{align*}
  U^\varepsilon (x) < \varepsilon^2 \cdot \frac{|x|-\frac{2C_0^2}{c_0}}{\frac{\varepsilon^2c_0}{2}}
  =\frac{2}{c_0}|x| -\frac{4C_0^2}{c_0^2}.
\end{align*}
Thus, we obtain that
\begin{align}\label{eq:evaluating-above}
  \underline{U}(x)\leq \overline{U}(x)\leq \frac{2}{c_0}|x| -\frac{4C_0^2}{c_0^2}<\infty
  \quad \text{for every } x\in\R^n\setminus D_0.
\end{align}
\end{proof}
We obtain following Proposition \ref{prop:global-sol}
directly from (\ref{eq:evaluating-above}).
\begin{prop}\label{prop:global-sol}
  Assume $(\mathrm{A}\ref{assum:homogenity})$-$(\mathrm{A}\ref{assum:rank})$ and $(\ref{assum:c-positive})$.
  Let $R=R(\sigma,c)>0$ be the constant satisfying Proposition $\ref{prop:evaluating-above}$.
  If $D_0\supset B(0,R)$ holds, then $(\R^n,\overline{U})$ and $(\R^n,\underline{U})$
  are viscosity subsolution and supersolution of $(\mathrm{FBP}_\infty)$, respectively.
\end{prop}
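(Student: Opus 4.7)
The plan is to observe that Proposition \ref{prop:global-sol} follows almost immediately by combining Proposition \ref{prop:evaluating-above} with Theorem \ref{thm:U-conv}, once one checks that each clause in the definition of viscosity sub/supersolution of $(\mathrm{FBP}_\infty)$ is satisfied when we take the domain to be all of $\R^n$.

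First, I would apply Proposition \ref{prop:evaluating-above}: under the assumption $D_0 \supset B(0,R)$, the estimate $\overline{U}(x) \leq \tfrac{2}{c_0}|x| - \tfrac{4 C_0^2}{c_0^2} < \infty$ together with $\underline{U} \leq \overline{U}$ shows that the sets $\{\overline{U} < \infty\}$ and $\{\underline{U} < \infty\}$ both equal $\R^n \setminus D_0$. Consequently the qualifier ``$\cap \{\overline{U} < \infty\}$'' in the conclusion of Theorem \ref{thm:U-conv} becomes redundant, so Theorem \ref{thm:U-conv} yields directly
\begin{equation*}
F(D\overline{U}, D^2 \overline{U}) \leq 1 \text{ in } \R^n \setminus \overline{D}_0, \qquad \overline{U} \leq G = 0 \text{ on } \partial D_0,
\end{equation*}
and the analogous supersolution inequalities for $\underline{U}$, in the viscosity sense.

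Next I would check the free boundary conditions. Taking $D = \widetilde{D} = \R^n$, the boundary $\partial D = \partial \widetilde{D}$ is empty, so by the remark following the definition of viscosity solutions there is no condition to verify concerning the limit $\overline{U}(x) \to t$ (resp.\ $\underline{U}(x) \to t$) as $x \to x_0 \in \partial D$. The remaining free boundary inequality ``$\overline{U} < t$ in $D \setminus \overline{D}_0$'' with $t = \infty$ is automatic since $\overline{U}$ is finite on $\R^n \setminus D_0$ by the step above, and similarly for $\underline{U}$. Upper/lower semicontinuity of $\overline{U}/\underline{U}$ on $\R^n \setminus D_0$ is built into their definitions as half-relaxed limits, and the inclusion $\R^n \supset \overline{D}_0$ is trivial. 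This completes the verification of the sub/supersolution properties, so no genuine obstacle arises; the content of the proposition is essentially bookkeeping once the a priori bound of Proposition \ref{prop:evaluating-above} is in place.
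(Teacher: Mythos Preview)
Your proposal is correct and follows the same approach as the paper, which simply states that the proposition follows directly from the finiteness estimate \eqref{eq:evaluating-above}; you have merely spelled out the bookkeeping (invoking Theorem~\ref{thm:U-conv} and noting that the free boundary conditions are vacuous when $D=\R^n$ and $t=\infty$) that the paper leaves implicit.
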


Next, we consider evaluating the large time behavior of 
the domain $D_t$ satisfying (\ref{FBP})
by using a parallel argument of \cite[Section 3]{MR1279982} and the Wulff shape of c: 
\begin{align*}
  \mathop{\mathrm{Wulff}}(c)
  :=\{ x \mid x\cdot \mathbf{n}\leq c(\mathbf{n})\text{ for all } \mathbf{n}\in\partial B(0,1)\}.
\end{align*}
We define
\begin{align}\label{eq:dfn-W}
  W(x):=\max_{|\mathbf{n}|=1} \frac{x\cdot \mathbf{n}}{c(\mathbf{n})}.
\end{align}
Then, we can check that for each $t>0$, the function $W$ defined by (\ref{eq:dfn-W}) satisfies
\begin{align*}
  \{x\mid W(x)\leq t\}
  &=\{x \mid x\cdot \mathbf{n}\leq t c(\mathbf{n}) \text{ for all }\mathbf{n}\in\partial B(0,1)\}\\
  &=\{t x \mid x\cdot \mathbf{n}\leq c(\mathbf{n}) \text{ for all }\mathbf{n}\in\partial B(0,1)\}\\
  &=t \mathop{\mathrm{Wulff}}(c)
\end{align*}
and we have the following proposition.
\begin{prop}\label{prop:Wulff-subsol}
The function $W$ defined by $(\ref{eq:dfn-W})$ satisfies 
$F(DW,D^2 W) \leq 1$ in $\R^n$ in the sense of viscosity solutions.
\end{prop}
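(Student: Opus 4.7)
The plan is to exhibit $W$ as the supremum of a family of classical (indeed, smooth) subsolutions and to extract the viscosity subsolution property by a standard touching argument. For each $\n\in\partial B(0,1)$ set
\begin{align*}
L_\n(x):=\frac{x\cdot\n}{c(\n)},
\end{align*}
which is well defined by $(\ref{assum:c-positive})$. Continuity and positivity of $c$ on the compact unit sphere ensure that $W=\sup_{|\n|=1}L_\n$ is continuous on $\R^n$ and that the supremum is attained at every $x$.

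First I will verify that each $L_\n$ is a classical subsolution (in fact a classical solution). Being linear, $DL_\n=\n/c(\n)\neq 0$ and $D^2 L_\n=O$, and $(\mathrm{A}\ref{assum:homogenity})$ gives $\sigma(\n/c(\n))=\sigma(\n)$ and $c(\n/c(\n))=1$, so $F(DL_\n,D^2 L_\n)=1$. For the subsolution test, let $\phi\in C^2(\R^n)$ be admissible, i.e.\ $W-\phi$ attains a local maximum at some $x_0$, and choose a maximizer $\n_0\in\partial B(0,1)$ with $W(x_0)=L_{\n_0}(x_0)$. Since $L_{\n_0}\leq W$ everywhere with equality at $x_0$, the smooth function $L_{\n_0}-\phi$ also attains a local maximum at $x_0$, which forces
\begin{align*}
D\phi(x_0)=DL_{\n_0}(x_0)=\frac{\n_0}{c(\n_0)},\qquad D^2\phi(x_0)\geq D^2 L_{\n_0}(x_0)=O.
\end{align*}
Substituting and applying $(\mathrm{A}\ref{assum:homogenity})$ once more yields
\begin{align*}
F(D\phi(x_0),D^2\phi(x_0))=-\tr\bigl(\sigma(\n_0)\,{}^t\!\sigma(\n_0)\,D^2\phi(x_0)\bigr)+1\leq 1,
\end{align*}
since $\sigma(\n_0)\,{}^t\!\sigma(\n_0)\geq O$ together with $D^2\phi(x_0)\geq O$ makes the trace nonnegative.

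The only potentially delicate point is the singularity of $F$ at $p=0$, but it never actually enters. At any $x_0$ admitting a $C^2$ test function from above, the identification derived above forces $D\phi(x_0)=\n_0/c(\n_0)\neq 0$ for every maximizer $\n_0$. At points where the maximum is realized by several $\n_0$ producing inconsistent values of $\n_0/c(\n_0)$ (for instance at $x_0=0$, where every unit vector is a maximizer), no admissible $\phi$ exists and the subsolution test is vacuous. Thus the argument yields the conclusion globally on $\R^n$.
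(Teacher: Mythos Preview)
Your proof is correct and follows essentially the same route as the paper's: both arguments identify $D\phi(x_0)$ with a nonzero vector of the form $\n_0/c(\n_0)$ and use convexity of $W$ (equivalently, touching the linear $L_{\n_0}$ from above) to obtain $D^2\phi(x_0)\geq O$, after which ellipticity of $F$ gives the conclusion. The only difference is presentational: the paper invokes \cite[Proposition~3.4]{MR1279982} for convexity, homogeneity, and $c(DW)\leq 1$, whereas you work directly with the family $\{L_\n\}$ and carry out the touching argument explicitly, making your version self-contained.
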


\begin{proof}
By the proof of \cite[Proposition 3.4]{MR1279982}, 
it follows that $W$ is a convex and positively 1-homogenous function, 
and satisfies $c(DW)\leq 1$ in $\R^n$.
Assume $(p,X)\in J^{2,+}W(x)$.
Then, we have that $W$ is twice differentiable at $x$ and $p=DW(x)\neq 0$, $X=D^2W(x)\geq O$
from these facts.
Therefore, we conclude $F(p,X)\leq F(p,O)=c(p)\leq 1$.
\end{proof}
Since we assumed that $D_0$ is bounded, 
there exists $t_0>0$ such that 
\begin{align*}
  \overline{D}_0 \subset t_0 \mathop{\mathrm{Wulff}}(c).
\end{align*}
For this $t_0$, we obtain the following proposition from Proposition \ref{prop:Wulff-subsol}.
\begin{prop}
  For each $t\in (0,\infty)$, the pair 
  $(\mathop{\mathrm{int}}((t+t_0)\mathop{\mathrm{Wulff}}(c)),W-t_0)$
  is a viscosity subsolution of $(\mathrm{FBP}_t)$.
\end{prop}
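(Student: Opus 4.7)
The plan is to verify directly, one by one, each of the four conditions that constitute a viscosity subsolution of $(\mathrm{FBP}_t)$ for the pair $(D,\widetilde{W})$, where $D:=\mathop{\mathrm{int}}((t+t_0)\mathop{\mathrm{Wulff}}(c))$ and $\widetilde{W}:=W-t_0$ with $W$ the function introduced in $(\ref{eq:dfn-W})$. The key inputs are the identity $\{W\leq s\}=s\,\mathop{\mathrm{Wulff}}(c)$ derived just before the proposition, the choice of $t_0$ satisfying $\overline{D}_0\subset t_0\mathop{\mathrm{Wulff}}(c)$, and Proposition \ref{prop:Wulff-subsol}.

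First I would check the inclusion $D\supset\overline{D}_0$ required in the definition. Since $c_0:=\min_{|\mathbf{n}|=1}c(\mathbf{n})>0$ gives $B(0,c_0)\subset\mathop{\mathrm{Wulff}}(c)$, the Wulff shape contains the origin in its interior. Combining this with the convexity of $\mathop{\mathrm{Wulff}}(c)$ and $t>0$, one has $t_0\mathop{\mathrm{Wulff}}(c)\subset\mathop{\mathrm{int}}((t+t_0)\mathop{\mathrm{Wulff}}(c))$, and so $\overline{D}_0\subset t_0\mathop{\mathrm{Wulff}}(c)\subset D$. Next, the PDE inequality $F(D\widetilde{W},D^2\widetilde{W})\leq 1$ on $D\setminus\overline{D}_0$ follows immediately from Proposition \ref{prop:Wulff-subsol} since $D\widetilde{W}=DW$ and $D^2\widetilde{W}=D^2W$. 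For the Dirichlet inequality $\widetilde{W}\leq G=0$ on $\partial D_0$, observe that $\partial D_0\subset\overline{D}_0\subset t_0\mathop{\mathrm{Wulff}}(c)=\{W\leq t_0\}$, hence $\widetilde{W}=W-t_0\leq 0$ pointwise there (so the viscosity formulation is automatic).

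It remains to verify the two free boundary conditions. The description $D=\{W<t+t_0\}$ gives $\widetilde{W}=W-t_0<t$ throughout $D\setminus\overline{D}_0$. For the boundary limit, note that $W$, being the supremum of the linear functions $x\mapsto x\cdot\mathbf{n}/c(\mathbf{n})$ over the compact set $\partial B(0,1)$ (recall $c\geq c_0>0$), is convex, positively $1$-homogeneous, and finite on $\R^n$, hence continuous on $\R^n$. Since $\partial D=\{W=t+t_0\}$, continuity yields $\widetilde{W}(x)=W(x)-t_0\to t$ as $x\to x_0\in\partial D$.

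The verification is essentially routine; the nontrivial content is entirely encapsulated in Proposition \ref{prop:Wulff-subsol} and in the sublevel-set identity for $W$, so I do not anticipate a substantive obstacle. The only point that requires minor care is confirming that $\widetilde{W}$ lies in $\USC(D\setminus D_0)$, which is immediate from the continuity of $W$ noted above.
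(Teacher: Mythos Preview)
Your proposal is correct and follows essentially the same approach as the paper, which simply states that the proposition follows from Proposition~\ref{prop:Wulff-subsol} and the sublevel-set identity $\{W\leq s\}=s\mathop{\mathrm{Wulff}}(c)$ without writing out the verification. Your explicit check of the four subsolution conditions is exactly what the paper leaves to the reader.
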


We obtain from the above proposition, 
Theorem \ref{thm:main}(1) and the definition of $\overline{U}$ and $\underline{U}$,
\begin{align*}
  \{\overline{U}<t\}\subset\{\underline{U}<t\}\subset (t+t_0)\mathop{\mathrm{Wulff}}(c)
\end{align*}
for each $t>0$.
By dividing the three sides by $t$, we have
\begin{align*}
  \frac{1}{t}\{\overline{U}<t\}
  \subset \frac{1}{t}\{\underline{U}<t\}
  \subset \frac{t+t_0}{t}\mathop{\mathrm{Wulff}}(c),
\end{align*}
and this implies
\begin{align*}
  \limsup_{t\to\infty}\frac{1}{t}\{\overline{U}<t\}
  \subset
  \limsup_{t\to\infty}\frac{1}{t}\{\underline{U}<t\}
  \subset \mathop{\mathrm{Wulff}}(c).
\end{align*}

\section*{Acknowledgements}
The author would like to thank his supervisor, 
Professor Hiroyoshi Mitake for 
providing him with invaluable insights and 
direction of the study.
The author is grateful to Professors Yoshikazu Giga, Qing Liu
and Hung Vinh Tran for
their helpful advices and kind comments.

\bibliographystyle{alpha}

\end{document}